\newcommand{\eprint}[1]{\href{https://arxiv.org/abs/#1}{arXiv:#1}}
\DeclareMathOperator{\Supp }{Supp}
\DeclareMathOperator{\Id }{Id} 
\DeclareMathOperator{\D}{div}
\DeclareMathOperator{\Tr}{Tr} 
\newtheorem{theorem}{Theorem}[section]
\newtheorem{lemma}[theorem]{Lemma}
\newtheorem{proposition}[theorem]{Proposition}
\newtheorem{definition}[theorem]{Definition}
\newtheorem{remark}[theorem]{Remark}
\def \TT  {\mathbb{T}}
\def \RR {\mathbb{R}}
\def \NN {\mathbb{N}}
\def \l {\lambda}
\def  \Rl { \overline{R}_l }
\def  \ul { \overline{u}_l }
\def  \pl { \overline{p}_l }
\def \X {\xi}
\newcommand{\comment}[1]{}
\numberwithin{equation}{section}
\begin{document}

\title[Stationary solutions and nonuniqueness of weak solutions]{Stationary solutions and nonuniqueness of weak solutions for the Navier-Stokes equations in high dimensions}

%
%
%
%
%
%
\author{Xiaoyutao Luo}
\address{Department of Mathematics, Statistics and Computer Science, University of Illinois At Chicago, Chicago, Illinois 60607}
\curraddr{Department of Mathematics, Statistics and Computer Science, University of Illinois At Chicago, Chicago, Illinois 60607}
\email{xluo24@uic.edu} 



\begin{abstract}
Consider the unforced incompressible homogeneous Navier-Stokes equations on the $d$-torus $\TT^d$ where $d\geq 4$ is the space dimension. It is shown that there exist nontrivial steady-state weak solutions $u\in L^{2}(\TT^d)$. The result implies the nonuniqueness of finite energy weak solutions for the Navier-Stokes equations in dimensions $d \geq 4$. And it also suggests that the uniqueness of forced stationary problem is likely to fail however smooth the given force is.
\end{abstract}

\maketitle

\section{Introduction}
The incompressible Navier-Stokes equations on the $d$-torus $\TT^d=\RR^d /\mathbb{Z}^d$ are the following system:
\begin{equation}\label{eq:NSE}\tag{NSE}
\begin{aligned}
\partial_t u - \nu \Delta u& +\D ( u\otimes u ) +\nabla p =0 \\
& \D u =0 ,
\end{aligned}
\end{equation}
where $u(x,t):\TT^d \times \RR^+ \to\RR^d$ is the unknown vector field, the scalar $p:\TT^d \to \RR$ is the pressure and $\nu$ is the viscosity. The system is supplemented by the periodic boundary condition $u(x+k,t)=u(x,t)$ for any $k\in \mathbb{Z}^d$. We study the nonuniqueness issue of the weak solutions of \eqref{eq:NSE} in dimensions $d\geq 4$.

The following weak formulation of \eqref{eq:NSE} will be used throughout the paper and the term ``weak solution'' refers to this definition in the sequel.
\begin{definition}\label{def:weaksolution}
A vector field $u \in C_w (0,T;L^2(\TT^d) )$ is a weak solution of \eqref{eq:NSE} if it satisfies:
\begin{enumerate}
\item $ u(t)$ has zero-mean on $\TT^d$ and is weakly divergence-free for all $t\in [0,T]$, namely for any $\phi(x) \in C^\infty_0(\TT^d  )$ 
\begin{equation}
\int_{\TT^d} \nabla \phi u(x,t) dx =0 \quad \text{for all $t\in [0,T]$.}
\end{equation}
\item For any test function $\varphi(x,t) \in C^\infty_0( [0,T] \times\TT^d  )$ such that $\varphi(x,t)$ is divergence-free in $x$ for all $t\in [0,T]$ we have
\begin{align*}
\int_\RR \int_{\TT^d} u \cdot ( \partial_t \varphi +  (u\cdot \nabla)\varphi + \nu  \Delta \varphi   ) dx dt =0.
\end{align*}
\end{enumerate}
\end{definition}

The system \eqref{eq:NSE} has been extensively studied by many researchers for a long time. In \cite{MR1555394} Leray constructed weak solutions $u\in L_t^\infty L^2\cap L^2_t H^1$ for divergence-free initial data $u_0 \in L^2(\RR^3)$. These solutions are termed Leray-Hopf weak solutions and also satisfy the energy inequality. A similar result was later obtained by Hopf in \cite{doi:10.1002/mana.3210040121} for the bounded domain with Dirichlet boundary condition. It is also worth noting that even though the global existence results of Leray and Hopf are best known for $d=2,3$, they can be carried over to $\RR^d$ or $\TT^d$ for $d\geq 4$ without much trouble. Let us recall the relevant facts in the following.
\begin{theorem}[Leray-Hopf weak solutions]
Let $\Omega=\TT^d$ or $\RR^d$ for $d\geq 3$. Given any divergence-free $u_0 \in L^2(\Omega)$ and $0< T\leq \infty$, there exists at least one weak solution $u\in C_w(0,T;L^2) \cap L^2(0,T; H^1)$ to \eqref{eq:NSE} and $u$ verifies additionally the energy inequality:
\begin{equation}\label{eq:energy_ineq}\tag{E.I.}
\|u(t) \|_2^2 +2\nu \int_{t_0}^t \|\nabla u(s) \|_2^2 ds \leq \|u(t_0) \|_2^2
\end{equation}
for any $t>0$ and a.e. $t_0 \in[0,t)$ including $0$.
\end{theorem}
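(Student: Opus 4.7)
The plan is to run the classical Galerkin approximation of Leray–Hopf; the scheme is dimension-independent apart from the Sobolev exponents used to control the nonlinear term. Let $\{e_k\}$ be an orthonormal basis of the zero-mean, divergence-free subspace of $L^2(\Omega)$ consisting of eigenfunctions of the Stokes operator (divergence-free trigonometric polynomials on $\TT^d$, the analogous setup after a mollification–truncation on $\RR^d$). Writing $P_n$ for the orthogonal projection onto the first $n$ basis elements, I consider the finite-dimensional ODE
\begin{equation*}
\partial_t u_n = \nu \Delta u_n - P_n \D(u_n \otimes u_n), \qquad u_n(0) = P_n u_0,
\end{equation*}
which has a unique local smooth solution by classical ODE theory. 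Pairing with $u_n$ and using the trilinear cancellation $\int_\Omega (u_n\cdot\nabla)u_n\cdot u_n \, dx = 0$ gives the exact identity
\begin{equation*}
\|u_n(t)\|_2^2 + 2\nu\int_0^t \|\nabla u_n(s)\|_2^2 \, ds \;=\; \|P_n u_0\|_2^2 \;\leq\; \|u_0\|_2^2,
\end{equation*}
which furnishes global existence of $u_n$ together with uniform bounds in $L^\infty_t L^2_x \cap L^2_t H^1_x$.

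Having these uniform bounds in hand, I pass to the limit. Weak-$*$ and weak compactness produce a subsequence with $u_n \rightharpoonup u$ in both function spaces above. The linear parts of the weak formulation pass to the limit immediately; for the nonlinear term $\D(u_n \otimes u_n)$ I need strong convergence of $u_n$ in $L^2_{t,x,\mathrm{loc}}$. Here I invoke the Aubin–Lions–Simon compactness lemma: using the Sobolev embedding $H^1\hookrightarrow L^{2d/(d-2)}$ one sees that $u_n\otimes u_n$ is uniformly bounded in $L^1_t L^{d/(d-2)}_x$, hence the Galerkin equation yields a uniform bound on $\partial_t u_n$ in $L^1_t H^{-s}_x$ for some $s=s(d)$ sufficiently large. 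Combined with the $L^2_t H^1_x$ bound, this gives the required strong convergence, and hence $u_n \otimes u_n \to u \otimes u$ in $L^1_{t,x,\mathrm{loc}}$; so $u$ satisfies the weak formulation of Definition \ref{def:weaksolution}.

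It remains to upgrade $u$ to lie in $C_w(0,T;L^2)$ and to verify the energy inequality \eqref{eq:energy_ineq}. The first follows from the standard fact that any $L^\infty_t L^2_x$ function whose distributional time derivative lies in a negative Sobolev space is weakly continuous into $L^2$. For the second, weak lower semicontinuity of the left-hand side of the Galerkin energy identity yields
\begin{equation*}
\|u(t)\|_2^2 + 2\nu\int_{t_0}^{t}\|\nabla u(s)\|_2^2 \, ds \;\leq\; \|u(t_0)\|_2^2
\end{equation*}
for $t_0$ in a full-measure set on which $u_n(t_0) \to u(t_0)$ strongly in $L^2$, with the endpoint $t_0=0$ covered by the strong convergence $P_n u_0 \to u_0$. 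The only dimension-sensitive step is the Aubin–Lions bookkeeping, but this is routine: no genuinely new idea beyond those of Leray and Hopf is required in the regime $d\geq 4$.
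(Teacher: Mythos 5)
Your Galerkin argument is correct and is precisely the classical Leray--Hopf construction that the paper invokes without proof (it recalls this theorem as a known result, citing Leray and Hopf, and merely remarks that the argument carries over to $d\geq 4$); your dimension-dependent Aubin--Lions bookkeeping via $H^1\hookrightarrow L^{2d/(d-2)}$ is exactly the routine adaptation intended. The only point worth flagging is that on $\RR^d$ the Stokes eigenbasis is unavailable, so one must substitute invading domains or Leray's mollified system, as you acknowledge in passing.
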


Even though the existence of weak solutions has been know for quite some time, the questions of uniqueness and regularity of Leray-Hopf weak solutions in $d\geq 3$ remain unknown to date and are considered one of the most important issues in mathematical fluid mechanics. More specifically the following questions are still open.
\begin{enumerate}
\item \textbf{Global regularity:} For any divergence-free initial data $u_0 \in L^2(\Omega)$, does there exist a global Leray-Hopf solution $u \in C^\infty((0,\infty)\times \Omega )$\footnote{Here we mean that for any $k,m\in \NN$ $\partial_t^k \nabla^m_x u$ is bounded on $[\epsilon,\infty)\times \Omega $.  } ?
\item \textbf{Uniqueness:} For any divergence-free initial data $u_0 \in L^2(\Omega)$, is the Leray-Hopf weak solution $u(t)$ with initial data $u_0$ unique among all Leray-Hopf weak solutions with initial data $u_0$?
\end{enumerate}

One of the motivations of our work is to obtain some insights into the above question of uniqueness. We are not yet able to obtain nonuniqueness results for the Leray-Hopf weak solutions. Instead, we consider the following pathway:

\textbf{A possible approach:} 
Find the ``smoothest'' function space $X \subset C_w (0,T;L^2)$ so that there exist two weak solutions of \eqref{eq:NSE} $u, v \in X$ such that $u(0) \neq v(0)$.

Let 
$$
\mathcal{E}= \{u\in C_w (0,T;L^2  ): \|u \|_{L^\infty_t L^2} +   \|u \|_{L^2_t H^1} <\infty  \}.
$$ The nonuniqueness problem of Leray-Hopf weak solutions can then be formulated as finding a space $X $ so that $ X \subset  \mathcal{E} $ and weak solutions of \eqref{eq:NSE} in $X$ satisfy \eqref{eq:energy_ineq}.

Following such approach, we are able to obtain a nonuniqueness statement in dimensions $d \geq 4$ for the function space:
$$
X= \{u\in C_w (0,T;L^2 ):  \partial_t u=0 \quad \text{and $u \in H^{\beta}$ for $\beta<\frac{1}{200}$} \}.
$$ Detailed discussions on the main results will be given in Section \ref{subsection:mainresult}.

\subsection{Background and previous works}
In this subsection let us review the some of the related works for the Navier-Stokes equations in different settings. We first discuss the progress towards proving the global regularity and uniqueness for the 3D Navier-Stokes equations. Then a brief summary of nonuniqueness results and the method of convex integration is given. Lastly, we describe some existence and regularity results on the forced stationary problem in high dimensions.
\subsubsection{Regularity and uniqueness results in 3D}
Since the seminal work of Leray, there have been a substantial amount of conditional regularity and uniqueness results for \eqref{eq:NSE} with $d=3$, the most physical relevant case. Notably the classical Ladyzhenskaya-Prodi-Serrin criterion \cite{Ladayzhenskaya1967,MR0136885,Prodi1959} says that if additionally a Leray-Hopf weak solution also belongs to $L^q_t L^p_x$ for some $\frac{2}{q} + \frac{3}{p} \leq 1$ with $p>3$ then the solution is regular and unique among all Leray-Hopf solutions with the same initial data. The endpoint case $L^\infty_t L^3_x$ was solved by Escauriaza-Seregin-\v{S}ver\'{a}k in \cite{MR1992563} and extensive studies have been devoted to generalize and refine the classical Ladyzhenskaya-Prodi-Serrin criterion, see \cite{MR2237686,MR1876415,MR2854673,MR2564471,MR3465978,MR3475661} and reference therein for more conditional regularity and uniqueness results for the 3D NSE.

\subsubsection{Nonuniqueness results and the method of convex integration}
Very recently in \cite{1709.10033} Buckmaster and Vicol proved the nonuniqueness of weak solution for the 3D Navier-Stokes with finite energy using the method of convex integration. Originated from differential geometry, the method convex integration was successfully applied to the 3D Euler equations for the first time by De Lellis-Sz\' ekelyhidi Jr. in \cite{MR2600877}, where they obtained a bounded solution to the 3D Euler equations with compact support in space-time generalizing the results of Scheffer \cite{MR1231007} and Shnirelman \cite{MR1476315}. The pioneering work \cite{MR2600877} of De Lellis-Sz\' ekelyhidi Jr. was further developed in a series of papers \cite{MR3090182,MR3374958,MR3254331} and eventually, leads to the resolution of the famous Onsager's conjecture for the 3D Euler equations by Isett in \cite{1608.08301}. For a more detailed account of applications of convex integration in fluid dynamics, we refer to the survey \cite{MR3619726} by De Lellis and Sz\'ekelyhidi, Jr. and other interesting papers by different authors such as \cite{1610.00676,1701.08678,1708.05666,MR3488536,MR3479065,MR3633742}.

In the construction of Buckmaster-Vicol weak solutions are allowed to have any prescribed non-negative smooth functions as the energy profiles and hence $0$ is not the only weak solution with finite energy by taking a nontrivial compact energy profile. It is worth noting that the solutions constructed in \cite{1709.10033} are not known to be Leray-Hopf nor do they have finite dissipation $L^2_t H^1$. So these solutions do not obey the Leray structure theorem of the 3D Navier-Stokes equations on the interval of regularity for Leray-Hopf weak solutions, see for instance the original paper by Leray \cite{MR1555394} or the notes by Galdi \cite{MR1798753}. It would be very interesting to extend the results of Buckmaster-Vicol to the Leray-Hopf weak solutions.

Aside from using convex integration to construct weak solutions with ill behaviors, there is another pathway in pursuing the possible nonuniqueness of the Leray-Hopf weak solutions for the 3D NSE. As pointed out by Jia-\v{S}ver\'{a}k in \cite{MR3179576} one can also study the nonuniqueness issue via self-similar solution for $(-1)$-homogeneous initial data. In this direction Jia-\v{S}ver\'{a}k proved in \cite{MR3341963} that nonuniqueness of Leray-Hopf weak solutions in the regularity class $L^\infty_t L^3$ under a certain assumption for the linearized Navier-Stokes operator. Very recently Guillod-\v{S}ver\'{a}k showed numerically in \cite{1704.00560} that such assumption is likely to be true, though a rigorous mathematical proof of the assumption is not available so far. 

\subsubsection{The forced stationary problem}
Unlike the case when $d=3$, fewer results are available for the Navier-Stokes equations in high dimensions $d \geq 4$. We mention here a few studies on the forced stationary problem of NSE with the presence of external force as in a sense they are closely related to the main results in this paper. The forced stationary problem consists of the following equations:
\begin{equation}\label{eq:fNSE}
\begin{cases}
 - \nu \Delta u +\D ( u\otimes u ) +\nabla p =f &\\
 \D u =0 &
\end{cases}\qquad \text{for all $x\in \Omega$}.
\end{equation}

\begin{remark}
In parallel with Definition \ref{def:weaksolution} for the unforced NSE, we search for the weak solution to the above system that is $L^2 (\Omega)$ and verifies \eqref{eq:fNSE} in the sense of distribution for all divergence-free test functions $\varphi \in C^\infty_c(\Omega)$ (or $C^\infty_0(\TT^d)$ if $\Omega =\TT^d$). Different types of formulations of the problem \eqref{eq:fNSE} have been considered in the literature, cf. \cite{MR1284206,MR2506072,MR2107439}.
\end{remark}

The existence of regular solutions to \eqref{eq:fNSE} has been known under various assumptions on the force $f$ and the domain $\Omega$. In the seventies, Gerhardt studied the four-dimensional case in \cite{MR520820}, where he proved that if $f\in L^p$ then if there exists a solution, then $u\in W^{2,p}$. Since then there have been a considerable amount of studies on the forced stationary problem in high dimensions. Frehse-R\r u\v zi\v cka \cite{MR1343646} and Struwe \cite{MR1343647} showed the existence and regularity of the solutions in five dimensions. Later Frehse-R\r u\v zi\v cka obtained existence of regular solutions in bounded six-dimensional domain in \cite{MR1276763} and on torus in dimensions up to $d=15$ in \cite{MR1373209}. Recently, Farwig and Sohr \cite{MR2486616} considered the general $d$-dimensional case where in particular a uniqueness result was obtained for small force $f$. We refer readers to \cite{MR2506072,MR2107439,MR1284206} and reference therein for more interesting results on the forced stationary problem. 

Despite the existence result on the regular solutions for the forced stationary case, the question of uniqueness of regular solutions to \eqref{eq:fNSE} remains mostly open, particularly when the data $f$ is large. We offer here a partial result in this direction showing that at least weak solutions are not unique to the stationary problem \eqref{eq:fNSE} when $f=0$.

\subsection{Main results}\label{subsection:mainresult}
Before giving the main results of the paper, let us state some of the motivations of this work. In the hope of better understanding of the nonuniqueness issue of the Navier-Stokes system in both forced stationary case and unforced time-dependent case, we study the following Liouville-type problem:

\textbf{(Q)} Consider \eqref{eq:NSE} for $\Omega=\RR^d$ or $\TT^d$. Does there exist nontrivial stationary weak solution $u$,  i.e. $\partial_t u=0$ so that $u\in  L^p(\Omega)$ for some $2\leq p < d$ (or $H^s(\Omega)$ for some $0 \leq s < \frac{d-2}{2}$)?

From an energy balancing point of view, one can think of \textbf{(Q)} as investigating how strong the nonlinear term is in producing nontrivial energy flux to balance linear dissipation. Such phenomenon is closely related to the Onsager's conjecture and the concept of anomalous dissipation for the Navier-Stokes equations \cite{MR1302409,MR2665030,MR2422377}. If \textbf{(Q)} has a positive answer, then it might be possible to use this mechanism of nontrivial energy flux to construct Leray-Hopf solutions satisfying \eqref{eq:energy_ineq} with strict inequality. In terms of $L^q_t L^p_x$ norms, for the Navier-Stokes equations in dimension $d \geq 3$ the scaling of the conditions implying uniqueness \cite{Ladayzhenskaya1967,MR0136885,Prodi1959,MR1876415} corresponds to $\frac{2}{q} + \frac{d}{p}=1$ while the one implying energy equality is $\frac{2}{q} + \frac{2}{p}=1$ \cite{MR0435629}.  So in view of such scaling gap, finding a Leray-Hopf solution with strict energy inequality could be the first step towards obtaining the nonuniqueness. We plan to address these issues in our future studies.

From a uniqueness point of view, a positive answer to the above question \textbf{(Q)} immediately would imply the nonuniqueness of weak solutions of \eqref{eq:NSE} in the class $L^p$. Indeed, as initial data, $u$ gives rise to two different weak solutions: a stationary solution $u(t)=u$ itself and the other one $v(t)$ is Leray-Hopf. Then $v$ as a Leary-Hopf weak solution satisfies the energy inequality but $u$ does not and hence they are different. Moreover such existence result would also imply the nonuniqueness of weak solutions of the forced stationary problem \eqref{eq:fNSE} for a particular force: there exists a force $f=0 \in C^\infty_0(\TT^d)$ such that the system \eqref{eq:fNSE} admits two different solutions, with one trivial solution being regular and the other nontrivial one in $L^p$. It would be very interesting obtain the same result for other nontrivial forces.

The main results of this note are the following theorems.
\begin{theorem}[Existence of stationary weak solutions]\label{thm:l2_stationary}
Suppose $d \geq 4$. There exists nontrivial steady-state weak solution $u\in  L^{2}(\TT^d)$ of \eqref{eq:NSE}.
\end{theorem}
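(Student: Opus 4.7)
The plan is to run a stationary convex integration scheme, producing a sequence of smooth subsolutions $(u_q, p_q, R_q)_{q\geq 0}$ of a Navier-Stokes-Reynolds system
\[
-\nu \Delta u_q + \D(u_q \otimes u_q) + \nabla p_q = \D R_q, \qquad \D u_q = 0,
\]
with $R_q$ a symmetric traceless tensor. The goal is to drive $\|R_q\|_{L^1(\TT^d)} \to 0$ geometrically along an amplitude parameter $\delta_q \searrow 0$ while keeping $\{u_q\}$ Cauchy in $L^2(\TT^d)$, so that the limit $u$ satisfies the stationary weak formulation. First I would seed the iteration with $u_0 = 0$ together with a nontrivial $R_0$, and track a fixed low-frequency mode through the scheme to guarantee $\|u\|_{L^2} \geq c > 0$ in the limit, ruling out the trivial solution.

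At each step I define $u_{q+1} = u_q + w_{q+1}$, where the time-independent perturbation $w_{q+1}$ is assembled from stationary intermittent building blocks of Mikado or Mikado-jet type --- vector fields of the form $W_{(\xi)}(x) = \psi_{\mu_{q+1}}(x)\, \Phi_{\lambda_{q+1}}(\xi \cdot x)\, \xi$ concentrated on codimension-$k$ neighborhoods of thickness $\mu_{q+1}^{-1}$ and oscillating at frequency $\lambda_{q+1}$. A standard geometric decomposition lemma prescribes amplitudes $a_{(\xi)}(x)$ so that the low-frequency part of $\sum_\xi a_{(\xi)}^2\, W_{(\xi)} \otimes W_{(\xi)}$ cancels $\rho_q\, \mathrm{Id} - R_q$ up to negligible error, and a divergence-free corrector is added to ensure $\D w_{q+1} = 0$.

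The next Reynolds stress $R_{q+1}$ is then obtained by applying an inverse-divergence operator to four separate error contributions: the linear dissipative error $\nu \Delta w_{q+1}$, the Nash interaction error coming from $u_q \otimes w_{q+1} + w_{q+1} \otimes u_q$, the high-frequency piece of $w_{q+1} \otimes w_{q+1}$, and the oscillation error from variation of the amplitudes. The stationary setting eliminates any transport error, but also removes the temporal regularization used by Buckmaster-Vicol in 3D, so every smallness has to be squeezed out of spatial intermittency. Standard $L^p$ bounds for the building blocks give $\|w_{q+1}\|_{L^p} \lesssim \mu_{q+1}^{k(1/2-1/p)}\, \delta_{q+1}^{1/2}$ for $p \geq 2$ and, crucially, $\|w_{q+1}\|_{L^1} \lesssim \mu_{q+1}^{-k/2}\, \delta_{q+1}^{1/2}$, so that the dissipative term obeys a bound of the form $\nu \lambda_{q+1} \mu_{q+1}^{-k/2}\, \delta_{q+1}^{1/2}$.

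The hard part will be the parameter balance. To close the iteration I must pick $(\lambda_{q+1}, \mu_{q+1})$ and the codimension $k$ so that each of the four errors sits below the next amplitude $\delta_{q+2} \ll \delta_{q+1}$ simultaneously. The dissipative error forces a lower bound $\mu_{q+1}^{k/2} \gg \nu \lambda_{q+1} \delta_{q+1}^{1/2}/\delta_{q+2}$, while the Nash and oscillation errors impose upper bounds on the ratio $\mu_{q+1}/\lambda_{q+1}$; these become compatible only when the intermittency is strong enough relative to the dissipation, i.e.\ when the codimension is large relative to a fixed budget fixed by the nonlinearity. Taking $k = d-1$, this system of inequalities can be solved by geometric schedules $\lambda_q, \mu_q \uparrow \infty$ precisely from $d \geq 4$ onward, which is the dimensional threshold appearing in the statement. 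Once this balance is fixed, standard telescoping and interpolation arguments will give $u_q \to u$ in $L^2(\TT^d)$, $R_q \to 0$ in $L^1$, and produce the desired nontrivial stationary weak solution.
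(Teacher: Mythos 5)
Your outline follows essentially the same route as the paper: a stationary Navier--Stokes--Reynolds iteration with Mikado-type building blocks concentrated on codimension-$(d-1)$ tubular neighborhoods, a geometric decomposition of the stress, the same four error terms, and the same parameter balance in which the dissipative error $\nu\lambda_{q+1}\mu_{q+1}^{-(d-1)/2}\delta_{q+1}^{1/2}$ is precisely what singles out $d\geq 4$. The one point to fix is the building block itself: in the stationary setting you must take the pure (pipe-type) Mikado flows, constant along the axis, since an axial oscillation $\Phi_{\lambda}(\xi\cdot x)$ destroys both $\D W_{(\xi)}=0$ and $\D\big(W_{(\xi)}\otimes W_{(\xi)}\big)=0$ and the time-dependent corrector normally used to absorb these is unavailable; with that choice your plan coincides with the paper's.
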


\begin{remark}
In fact, we proved a slightly stronger result that the solution lies in $H^{\beta}(\TT^d)$ for every $\beta < \frac{1}{200}$.
\end{remark}

As discussed in the paragraph preceding the statement of our main results, nonuniqueness of weak solutions of \eqref{eq:NSE} and of the stationary problem \eqref{eq:fNSE} in $d\geq 4$ both follow from Theorem \ref{thm:l2_stationary}.
\begin{theorem}[Nonuniqueness of the NSE in $d\geq 4$]\label{thm:nonuniqueness}
Suppose $d \geq 4$. There exists divergence-free initial data $u_0 \in  L^{2}(\TT^d)$ so that $u_0$ admits at least two different weak solutions of \eqref{eq:NSE} in the sense of Definition \ref{def:weaksolution}.
\end{theorem}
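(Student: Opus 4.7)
The plan is to leverage Theorem~\ref{thm:l2_stationary} directly: take the nontrivial stationary weak solution $u \in L^2(\TT^d)$ produced there and use it simultaneously as a time-independent weak solution of \eqref{eq:NSE} in the sense of Definition~\ref{def:weaksolution} and as the initial datum for a \emph{second}, distinct weak solution supplied by the Leray-Hopf existence theorem quoted earlier. The only things to verify are that the stationary $u$ genuinely qualifies as a weak solution per Definition~\ref{def:weaksolution} on any $[0,T]$, and that the Leray-Hopf solution emanating from the same datum cannot coincide with it.

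First I would check that $u$ fits Definition~\ref{def:weaksolution}. Membership in $C_w(0,T;L^2)$ is automatic since $u$ does not depend on $t$, and the divergence-free and zero-mean conditions on each time slice are inherited from the stationary problem. For the weak formulation, given a divergence-free $\varphi(x,t) \in C_0^\infty([0,T]\times \TT^d)$, the $\partial_t\varphi$ contribution integrates to zero by compact support in time, while the remaining terms reduce to $\int_0^T \bigl(\int_{\TT^d} u \cdot [(u\cdot \nabla)\varphi + \nu\Delta \varphi]\,dx\bigr)\,dt$, which vanishes because the inner integral is zero for every fixed $t$ by the stationary weak formulation applied to the divergence-free test function $\varphi(\cdot, t)$.

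Next, setting $u_0 := u \in L^2(\TT^d)$, the Leray-Hopf theorem produces a weak solution $v \in C_w(0,T;L^2) \cap L^2(0,T; H^1)$ with $v(0)=u_0$ satisfying \eqref{eq:energy_ineq}. To conclude, I would show $v \ne u$ as weak solutions on any interval $(0,T)$. Suppose for contradiction that $v(t)=u$ for a.e.\ $t$. Then $v \in L^2(0,T; H^1)$ forces $u \in H^1(\TT^d)$, and inserting $v \equiv u$ into \eqref{eq:energy_ineq} with $t_0=0$ and arbitrary $t>0$ yields $\|u\|_2^2 + 2\nu t \|\nabla u\|_2^2 \leq \|u\|_2^2$, hence $\nabla u \equiv 0$. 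Together with the zero-mean condition this forces $u \equiv 0$, contradicting the nontriviality asserted in Theorem~\ref{thm:l2_stationary}.

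I do not anticipate any real obstacle: once Theorem~\ref{thm:l2_stationary} is granted, Theorem~\ref{thm:nonuniqueness} is essentially a one-paragraph corollary. All the content of the nonuniqueness statement lies in the existence of the stationary solution; the comparison with a Leray-Hopf solution is standard, and the only subtlety worth flagging is that the energy-inequality step a priori needs $\nabla u \in L^2$, which is delivered for free by the contradiction hypothesis $v = u \in L^2_t H^1$.
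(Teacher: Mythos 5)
Your proposal is correct and follows exactly the route the paper itself indicates (in the paragraph preceding the statement of the main results): the nontrivial stationary solution from Theorem~\ref{thm:l2_stationary} serves both as a time-independent weak solution and as initial data for a Leray--Hopf solution, and the two must differ because the energy inequality \eqref{eq:energy_ineq} together with the zero-mean condition would force any stationary Leray--Hopf solution to vanish. Your extra care in noting that the contradiction hypothesis $v=u$ supplies $u\in H^1$ for free is a nice touch but does not change the argument.
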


\begin{theorem}[Nonuniqueness of the stationary problem]\label{thm:nonuniqueness_fNSE}
Suppose $d \geq 4$. There exists a force $f \in C^\infty_0(\TT^d)$ so that the system \eqref{eq:fNSE} admits at least two different weak solutions of \eqref{eq:fNSE} denoted as $u $ and $v$ so that $u=0\in C^\infty_0(\TT^d)$ is trivial while $v\in L^2(\TT^d)$ is nontrivial.
\end{theorem}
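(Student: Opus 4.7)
The plan is to obtain the theorem as a direct corollary of Theorem~\ref{thm:l2_stationary} by taking the prescribed force to be $f \equiv 0$, which trivially belongs to $C^\infty_0(\TT^d)$. With this choice, the forced stationary system \eqref{eq:fNSE} reduces to the time-independent version of \eqref{eq:NSE}, and two candidate solutions present themselves: the identically zero field $u \equiv 0$ as the smooth trivial solution, and the nontrivial stationary weak solution $v \in L^2(\TT^d)$ supplied by Theorem~\ref{thm:l2_stationary} as the second one.

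First I would verify that $u \equiv 0$ is indeed a weak solution of \eqref{eq:fNSE} with $f = 0$: this is immediate, since every term vanishes and both the divergence-free and zero-mean conditions are trivially met. Second, I would check that the $v$ furnished by Theorem~\ref{thm:l2_stationary} satisfies \eqref{eq:fNSE} with $f = 0$ in the distributional sense described in the remark following \eqref{eq:fNSE}. The key step is to test Definition~\ref{def:weaksolution} with separable test functions $\varphi(x,t) = \eta(t)\psi(x)$, where $\psi \in C^\infty_0(\TT^d)$ is divergence-free and $\eta \in C^\infty_c((0,T))$ is arbitrary. Using $\partial_t v = 0$, the time integral factorizes and the term containing $\partial_t \varphi$ contributes zero since $\eta$ has compact support; since $\int \eta$ may be chosen nonzero, one extracts the stationary identity
\begin{equation*}
\nu \int_{\TT^d} v \cdot \Delta \psi \, dx + \int_{\TT^d} v \cdot (v \cdot \nabla)\psi \, dx = 0
\end{equation*}
for every divergence-free $\psi \in C^\infty_0(\TT^d)$, which is precisely the weak formulation of \eqref{eq:fNSE} with $f = 0$.

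Since $v$ is nontrivial in $L^2(\TT^d)$ whereas $u \equiv 0$, the two weak solutions are distinct and the theorem follows. There is no genuine obstacle to surmount here: all the nontrivial analytic content has been packed into Theorem~\ref{thm:l2_stationary}, and the remaining work is a formal translation between the space-time weak formulation of \eqref{eq:NSE} and the stationary weak formulation of \eqref{eq:fNSE} via separation of variables in the test function. A natural follow-up question is whether the same nonuniqueness persists for genuinely nonzero smooth forces obtained by perturbing $f = 0$, but the bare statement above requires nothing beyond this short reduction.
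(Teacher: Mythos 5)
Your proposal is correct and is exactly the argument the paper intends: take $f=0$, use $u\equiv 0$ as the trivial regular solution, and use the nontrivial stationary $v\in L^2(\TT^d)$ from Theorem~\ref{thm:l2_stationary} as the second solution. The separation-of-variables step translating Definition~\ref{def:weaksolution} into the stationary weak formulation is a routine verification that the paper leaves implicit (its construction in fact produces $v$ directly as a distributional solution of the stationary system), so there is no substantive difference.
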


To the author's knowledge, Theorem \ref{thm:l2_stationary} is the first result showing the existence nontrivial stationary weak solutions for the system \eqref{eq:NSE}. So it provides a positive answer to \textbf{(Q)} for $p=2$ (or any $p>2$ sufficiently close to 2). It is interesting that even without the presence of external force, the nonlinear term itself can produce enough energy flux to balance the linear dissipation. 

We remark that even though adaptations of the convex integration scheme have already been used for other partial differential equations in fluid dynamics, it is the first time that such method is used for the Navier-Stokes equations in dimensions $d \geq 4$. Moreover, it is also worth noting that the scheme used by Buckmaster-Vicol does not generate stationary weak solutions for the 3D NSE even if one takes a constant energy profile. The reason is that the building blocks of their construction are time-dependent by default. It is not clear whether one can adapt their scheme to obtain stationary weak solutions in 3D NSE. The existence of nontrivial stationary weak solutions of the 3D NSE still remains open. Another benefit of considering stationary weak solutions is that without the time-dependence of the solution our proof is much more streamlined.

It also appears that our current scheme is compatible with the time-dependent case, and it is likely that we can also obtain weak solutions with any given energy profile as in \cite{MR3090182,1701.08678,1709.10033}. However, it is unlikely that one is able to obtain nonuniqueness of Leray-Hopf weak solutions using current techniques without incorporating substantially new ideas.

\subsection{Comments on the role of intermittency}\label{subsection:comments_intermittency}
Compared with other inviscid or ipodissipative models, such as the Euler equations, the Muskat problem, the Surface Geostrophic equations or the ipodissipative Navier-Stokes equations \cite{1610.00676,MR3479065,1709.05155,1708.05666} where results of nonuniqueness-type have been obtained, the Navier-Stokes system has a dissipation term $\nu\Delta u$ with two derivatives, making it much harder to find suitable building blocks in the convex integration scheme. To resolve this issue one has to start with building blocks that are intermittent, which means that Bernstein's inequality is highly saturated. The concept of intermittency is crucial to the theory of turbulence in hydrodynamics \cite{MR1428905} and it is both instructive and interesting to understand its role from a mathematical point of view, see \cite{MR0495674,MR2566571,MR3152734,1802.05785}. Besides the work of Buckmaster-Vicol \cite{1709.10033}, the idea of using building blocks that are intermittent has been used for other systems with diffusions, such as the transport-diffusion equations \cite{1806.09145,1712.03867}.

Let us briefly discuss the concept of intermittency using the Littlewood-Paley decomposition as follows. Suppose $u_q = \Delta_q u$ is a Littlewood-Paley projection at frequency of size $\sim 2^q$, then the intermittency $D \in[0,d]$ of $u$ is measured by
\begin{equation}
\|u_q \|_\infty \sim 2^{q \frac{d-D}{2} }  \| u_q \|_2 \quad  \text{for all $q$ sufficiently large}.
\end{equation}
When $D =d$ the function $u$ is homogeneous spatially and all $L^p$ norms are of the same order. When $D =0$ the function $u$ has extreme intermittency and Bernstein's inequality is fully saturated. In view of the behavior in physical space, $D$ roughly measures the concentration level of $u$ in the sense that $u$ is concentrated on some set of dimension $D$. The Beltrami flows used in \cite{MR3090182,MR3374958,MR3254331,MR3530360} for the 3D Euler equations and the Mikado flows used in the resolution of the Onsager's conjecture \cite{1608.08301} all have intermittency $D=3$. In contrast, the intermittent Beltrami flows constructed by Buckmaster-Vicol \cite{1709.10033} can be made to have intermittency $D=0$.\footnote{To get $D=0$ one has to change the parameters there for the intermittent Beltrami flows. And for their final perturbation $0 < D <1$.} 

Heuristically speaking if a weak solution of \eqref{eq:NSE} is of intermittency $D \geq d-2$ then it is regular. So it appears impossible to have stationary weak solutions of \ref{eq:NSE} with intermittency bigger than $d-2$. This is due to the fact that linear term dominates the nonlinear term in this regime and the problem becomes subcritical. We show this heuristics by considering the energy flux through each Littlewood-Paley shell as follows. Assuming only local interactions between scales, i.e. $\D (u \otimes u) \cdot u_q \sim \D (u_q \otimes u_q) \cdot u_q$ for simplicity. Then consider the energy flux equation obtained by multiplying \eqref{eq:NSE} with $\Delta_q u_{q}$ and then integrating in space:
\begin{align*}
\frac{d}{dt}\|u_q \|_2^2 +\text{Linear term}=\text{Nonlinear term}
\end{align*}
with
\begin{align*}
\text{Linear term} =  (\Delta u_q,u_q )  \sim 2^{2q} \|u_q \|_2^2  ,
\end{align*}
and
\begin{align*}
\text{Nonlinear term} =   ( \D (u \otimes u) \cdot u_q) \sim (\D (u_q \otimes u_q) \cdot u_q )\lesssim \| u_q \|_2^2 \| u_q \|_\infty \sim 2^{q \frac{d-D}{2} }  \| u_q \|_2^3.
\end{align*}
Due to the fact that $\| u_q\|_2 \to 0$ as $q \to \infty$ we have $\text{Linear term} \geq \text{Nonlinear term}$ when $D \geq d-2 $. 

Finally we remark that the building blocks that we are using have intermittency dimension $1$, which is the limitation forcing us to work in $d\geq 4 $. More discussions on the intermittency and its role in the construction can be found in Section \ref{subsection:perturbation}.

\subsection{Some remarks on notations}
Throughout the manuscript we use the following standard notations. 
\begin{itemize}
\item $\|\cdot \|_p := \| \cdot \|_{L^p(\TT^d)}$ is the Lebesgue norm for any $1\leq p \leq \infty$ and $\|\cdot \|_{C^m} := \sum_{0\leq  i\leq m}\| \nabla^i \cdot \|_{\infty}$ for any $m$  is the H\"older norm on $d$-torus $\TT^d$.

\item We say a function $f(x):\TT^d \to \RR$ (or $\TT^d \to \RR^d$ for the vector case) is smooth if $f$ has continuous derivative or any order and we denote $f\in C^\infty(\TT^d)$. The space $C_0^\infty(\TT^d) $ consists of all smooth functions with zero-mean.
\item $x \lesssim y$ stands for the bound $x \leq C y$ with some constant $C$ which is independent of $x$ and $y$ but may change from line to line. Then $x \sim y$ means $x \lesssim y$ and $y \lesssim x$ at the same time. We use $x\ll y$ to indicate $x\leq cy$ for some small constant $0<c<1$.

\item The space $C^\infty_0(\TT^d)$ is the set of smooth functions with zero-mean on $\TT^d$. $\fint_{\TT^d }= \frac{1}{|\TT|^d} \int_{\TT^d }$ is the average integral and for any function $f\in L^1(\TT^d)$, its average is denoted by $\overline{f}=\fint_{\TT^d }f$.

\item For any $\TT^d$-periodic function $f\in L^p(\TT^d)$ and $\sigma >0$, the notation $f(\sigma \cdot)$ is the scaled $\sigma^{-1} \TT^d$-periodic function $f(\sigma x)$ so that $\|f(\sigma \cdot) \|_p= \|f  \|_p   $ for $L^p$ norms. 

\item For vectors $a,b \in \RR^d$, $a\otimes b$ is the matrix with $(a\otimes b)_{ij} = a_i b_j$ and $a \mathring{ \otimes } b =  a_i b_j (1 -\delta_{ij} ) $ is the trace-less product. For matrix-value functions $f=f_{ij}$ and $g=g_{ij}$, $\D f = \partial_i f_{ij}$ and $f:g =f_{ij} g_{ij}$.
 
\item $\Delta_q$ is the standard periodic Littlewood-Paley projections on to the dyadic frequency shell $ 2^{q-1}\leq |\X| \leq 2^{q+1}$ for any $q \geq -1$ and $\Delta_{\leq q} =\sum_{r \leq q} \Delta_r$ and $\Delta_{\geq q} =\sum_{r \geq q} \Delta_r$.

\item We also use wavenumber projections to simplify notations. For any $\l\in\NN$ define $\mathbb{P}_{\leq \l} =\sum_{q:2^q \leq \l} \Delta_q$ and $\mathbb{P}_{\geq \l} =\Id-\mathbb{P}_{\leq \l}$.

\item $\mathcal{S}^{n \times n}_+$ denotes the set of positive definite symmetric $n \times n$
matrices and $Q^d =[0,1]^d$ is the $d$-dimensional box.
\end{itemize}

\subsection{Plan of the paper}
The rest of the paper is organized as follows. In Section \ref{Section:outline} we give the main idea of the construction. The main proposition of the paper and how we design the perturbation $w_n$ in the iteration process are discussed. We then show that Theorem \ref{thm:l2_stationary} follows from the main proposition. Next, the building blocks that we called concentrated Mikado flows are introduced in Section \ref{Section:mikado}, where a key technical tool in the form of a commutator-type estimate is also presented. With all ingredients in hand, we prove the main proposition in Section \ref{Section:proof}.

\subsection*{Acknowledgments}
The author is grateful to his advisor Alexey Cheskidov for many stimulating discussions and for valuable comments after reading earlier versions of the manuscript. The author is partially supported by the NSF grant DMS 1517583 through his advisor Alexey Cheskidov.

\section{Outline of the construction}\label{Section:outline}
In this section, we briefly introduce the main idea of the construction. The proof of Theorem \ref{thm:l2_stationary} is based on an iteration scheme. We construct by induction a sequence of smooth solutions to the Navier-Stokes-Reynolds system verifying a certain set of estimates which guarantees the convergence to a stationary weak solution to \eqref{eq:NSE} in $L^2(\TT^d)$. The iteration process is then summarized in Proposition \ref{prop:main_iteration}. After stating the main proposition, we give a proof of Theorem \ref{thm:l2_stationary}. Lastly, we outline the explicit form of the velocity perturbation and explain its important role in the induction process.

\subsection{Navier-Stokes-Reynolds system}\label{subsection:NSRsystem}
Let us first recall the Navier-Stokes-Reynolds system\footnote{We normalize $\nu$ to $1$ without loss of generality.} in the time-dependent case:
\begin{equation}\label{eq:time_NSRE}
\begin{cases} 
\partial_t u -\Delta u  + \D (u  \otimes u  )  +\nabla p  =\D R  &\\
\D u  =0. &
\end{cases}
\end{equation}
where $R$ is a trace-less symmetric matrix usually termed Reynolds stress in the literature. 

The system \eqref{eq:NSRE} arises naturally in the study of weak solutions of the 3D Navier-Stokes equations and the 3D Euler equations. The tensor $R$ measures the distance to the \eqref{eq:NSE}. In fact every weak solution to the original Navier-Stokes equations \eqref{eq:NSE} can generate a family of solutions $v_l$ of \eqref{eq:time_NSRE}. Let $ u $ be a weak solution of \eqref{eq:NSE} and define
$$
v_l= u*\eta_l
$$ 
where $*\eta_l$ is some kind of averaging process in space (for example frequency localization to wavenumber $\lesssim l^{-1}$ or standard smoothing mollifier at length scale $\sim l$) , then $v_l$ is a solution to 
\begin{equation}
\begin{cases}
\partial_t v_l-\Delta v_l + \D (v_l \otimes v_l )  +\nabla p_l =\D R_l&\\
\D v_l =0. &
\end{cases}
\end{equation}
for some suitable pressure $p_l$ where the symmetric trace-less matrix $R_l$ is defined by
\begin{align*}
R_l=  (u  \mathring{ \otimes } u)*\eta_l  -( u*\eta_l) \mathring{ \otimes }  (u*\eta_l ). 
\end{align*}

Since we are constructing stationary weak solutions to \eqref{eq:NSE}, it is convenient to consider the following stationary Navier-Stoke-Reynolds system:
\begin{equation}\label{eq:NSRE}\tag{NSR}
\begin{cases} 
-\Delta u  + \D (u  \otimes u  )  +\nabla p  =\D R  &\\
\D u  =0. &
\end{cases}
\end{equation}
For our consideration here all $ u $, $p $ and $R $ are assumed to be $C_0^\infty(\TT^d)$, i.e. smooth and zero-mean.

A sequence of solution triplet $\{ (u_n,p_n,R_n) \}_{n\geq 1}$ to \eqref{eq:NSRE} will be constructed in the proof, and we measure the solutions $(u_n,p_n,R_n)$ by two parameters, a frequency $\l_n$ and a amplitude $\delta_{n}$. These two parameters are explicitly defined by
\begin{equation}\label{eq:def_for_l_n_d_n}
\begin{aligned}
&\l_n =  \left \lceil{a^{b^n}}\right \rceil \\
&\delta_n = \l_n^{-2\beta}
\end{aligned}
\end{equation}
where $ \left \lceil{x}\right \rceil$ denotes the smallest integer $n \geq x$, the parameters $a>0$ and the exponential frequency gap $b >1$ are large depending on $\beta $, which is the $L^2$ regularity of the solution $u$. The double exponential growth of $\l_n$ is critical for our proof, see Proposition \ref{prop:inversediv_commutator2} in Section \ref{Section:mikado}.

Starting with zero solution $(u_1,p_1,R_1 )=(0,0,0)$ we will construct a sequence of solutions $(u_n,p_n,R_n )$ of the system \eqref{eq:NSRE} so that the following set of estimates is verified:
\begin{align}
\| R_n\|_1 & \leq \delta_{n+1}\l_n^{-2 \alpha} \label{eq:hypothesis1} \tag{H1}\\
\| u_n\|_2 & \leq 1-\delta_{n }^{1/2} \label{eq:hypothesis2} \tag{H2}\\
\| \nabla u_n\|_2 & \leq \l_n \delta_{n }^{1/2  } \tag{H3}\label{eq:hypothesis3}
\end{align}
where $0 < a<\beta$ is another small parameter depending on $\beta$ and $b$. The exact values of all the parameters will be given in Section \ref{Section:proof}.

We remark that unlike the schemes used for the 3D Euler equations, cf. \cite{1701.08678,1608.08301,MR3090182}, here the Reynolds stress $R_n$ is measured in $L^1$ norm rather than $L^\infty$ norm. The reason is that $L^d$ is the critical norm for \eqref{eq:NSE} in $d$ dimension. So as pointed out in the introduction, no stationary solution exists in $L^\infty(\TT^d)$\footnote{Here and in what follows, weak solutions refer to Definition \ref{def:weaksolution}.} regardless of the dimensions and hence $R_n$ can not have any decay in $L^\infty$.

With these in mind we state the main proposition.
\begin{proposition}\label{prop:main_iteration}
There exists a sufficiently small $0<\beta \ll 1$ such that we can find $b>1$, $0<\alpha \ll\beta$  and $a \gg 1$ so that there exists a sequence of smooth solution triplets $(u_n,p_n,R_n )$ to the system \eqref{eq:NSRE} for $n \in \NN$ starting from $(u_1,p_1,R_1 )=(0,0,0)$ verifying \eqref{eq:hypothesis1}, \eqref{eq:hypothesis2} and \eqref{eq:hypothesis3}. Moreover each velocity increment $u_n -u_{n-1}$ is nontrivial and we have the estimate:
\begin{align}\label{eq:main_iteration_w_n}
\|u_{n} -u_{n-1}  \|_2 + \frac{1}{\l_n}\| \nabla u_{n} -\nabla u_{n-1} \|_2 \leq   \l_n^{-\beta}.
\end{align}
\end{proposition}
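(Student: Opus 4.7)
The plan is to prove Proposition \ref{prop:main_iteration} by induction on $n$, with base case $(u_1,p_1,R_1)=(0,0,0)$ trivially satisfying \eqref{eq:hypothesis1}--\eqref{eq:hypothesis3}. For the inductive step, assume $(u_n,p_n,R_n)$ is given and set $u_{n+1}:=u_n+w_{n+1}$ for a smooth, zero-mean, divergence-free perturbation $w_{n+1}$ to be designed. Subtracting \eqref{eq:NSRE} at level $n$ from level $n+1$ and using an inverse-divergence operator $\mathcal{R}$ (mapping mean-zero vector fields to trace-free symmetric matrices), the new Reynolds stress decomposes modulo a pressure correction as
\begin{equation*}
R_{n+1} \;=\; \mathcal{R}(-\Delta w_{n+1}) \;+\; \mathcal{R}\,\D\bigl(u_n\otimes w_{n+1}+w_{n+1}\otimes u_n\bigr) \;+\; \mathcal{R}\,\D\bigl(w_{n+1}\mathring{\otimes} w_{n+1}+R_n\bigr).
\end{equation*}
The design principle is that the low-frequency part of the last bracket is arranged to vanish exactly, leaving only fast oscillations on which $\mathcal{R}$ gains a derivative.

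The perturbation is built from the concentrated Mikado flows of Section \ref{Section:mikado}: smooth, stationary, $\TT^d$-periodic, divergence-free vector fields $\mathbb{W}_k(y)$ supported in thin cylinders around straight lines in a finite set of rational directions $k\in\Lambda\subset\mathbb{S}^{d-1}$, with a concentration parameter $\mu_{n+1}\in(\lambda_n,\lambda_{n+1})$ so that each $\mathbb{W}_k$ has intermittency dimension $1$, normalized to $\|\mathbb{W}_k\|_2\sim 1$ and $\|\mathbb{W}_k\|_p\sim\mu_{n+1}^{(d-1)(1/2-1/p)}$. A standard Nash-type geometric lemma furnishes smooth nonnegative amplitudes $a_k(x)$ (depending on a mollification of $R_n$) satisfying $\sum_{k\in\Lambda}a_k^2\,\langle\mathbb{W}_k\otimes\mathbb{W}_k\rangle=\rho(x)\,\mathrm{Id}-R_n$, where the scalar $\rho$ is absorbed into the pressure. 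The principal perturbation is then $w_{n+1}^{(p)}(x)=\sum_k a_k(x)\,\mathbb{W}_k(\lambda_{n+1}x)$, made exactly divergence-free by a small corrector, and the identity $\D\bigl(w_{n+1}^{(p)}\mathring{\otimes} w_{n+1}^{(p)}\bigr)+\D R_n=\nabla\rho+(\text{fast oscillations})$ drives the scheme.

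Each error contribution to $R_{n+1}$ is estimated in $L^1$ using the intermittency of $\mathbb{W}_k$, so in particular $\|\mathbb{W}_k(\lambda_{n+1}\,\cdot)\|_1\sim\mu_{n+1}^{-(d-1)/2}$. The Nash-type error from differentiating the amplitudes scales as $\lambda_{n+1}^{-1}\|\nabla a_k\|_\infty\|w_{n+1}\|_1$ after $\mathcal{R}\D$; the oscillation error from the high-frequency part of $w_{n+1}\mathring{\otimes} w_{n+1}$ is controlled by the commutator-type estimate (Proposition \ref{prop:inversediv_commutator2}), which exploits the double-exponential spacing $\lambda_{n+1}=\lceil a^{b^{n+1}}\rceil$ to gain a factor of $(\lambda_n/\lambda_{n+1})^{\sigma}$; and the dissipation error $\mathcal{R}(-\Delta w_{n+1})$ scales like $\lambda_{n+1}\cdot\delta_{n+1}^{1/2}\cdot\mu_{n+1}^{-(d-1)/2}$. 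Choosing $\mu_{n+1}=\lambda_{n+1}^{\gamma}$ for a suitable $\gamma\in(0,1)$ and $\beta,\alpha$ small with $b,a$ large, all three error terms fit under the target $\delta_{n+2}\lambda_{n+1}^{-2\alpha}$, verifying \eqref{eq:hypothesis1} at level $n+1$. The bound $\|w_{n+1}\|_2\lesssim\delta_{n+1}^{1/2}$ (from $\|a_k\|_2^2\lesssim\|R_n\|_1$) together with $\delta_n^{1/2}\gg\delta_{n+1}^{1/2}$ (forced by $b>1$) yields \eqref{eq:hypothesis2} and the increment bound \eqref{eq:main_iteration_w_n}, while \eqref{eq:hypothesis3} follows from $\|\nabla w_{n+1}\|_2\lesssim\lambda_{n+1}\|w_{n+1}\|_2$.

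The main obstacle will be the dissipation error $\mathcal{R}(-\Delta w_{n+1})$: the Laplacian costs two derivatives whereas $\mathcal{R}$ recovers only one, so in $L^1$ the intermittency of $\mathbb{W}_k$ must beat a factor of $\lambda_{n+1}$. Quantitatively, $\lambda_{n+1}\cdot\mu_{n+1}^{-(d-1)/2}\ll 1$ forces $\mu_{n+1}\gtrsim\lambda_{n+1}^{2/(d-1)}$, and since one simultaneously needs $\mu_{n+1}\ll\lambda_{n+1}$ for the Nash and oscillation errors, admissibility reduces to $2/(d-1)<1$, i.e.\ $d\geq 4$. This is precisely why the construction does not extend to $d=3$, and it is also the reason the regularity exponent is pinned at $\beta<\tfrac{1}{200}$: the narrow interval of admissible $\gamma$ leaves essentially no slack in the parameter budget. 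The remaining verifications are a careful, but essentially mechanical, bookkeeping of these parameter inequalities.
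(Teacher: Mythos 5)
Your outline reproduces the paper's architecture faithfully: concentrated Mikado flows of intermittency dimension one, a geometric lemma cancelling the low-frequency part of $w\mathring{\otimes}w+R_n$, a decorrelation/commutator estimate exploiting the super-exponential frequency gap, and the same parameter count forcing $d\geq 4$ through $\lambda_{n+1}^{2/(d-1)}\lesssim\mu_{n+1}\ll\lambda_{n+1}$. Two load-bearing steps, however, are missing. The first concerns mollification: you perturb $u_n$ itself and mollify only $R_n$ inside the amplitudes. The quadratic error $\mathcal{R}\,\D(u_n\otimes w_{n+1}+w_{n+1}\otimes u_n)$ then cannot be closed, because the inductive hypotheses control only $\|u_n\|_2$ and $\|\nabla u_n\|_2$ while the estimate needs something like $\|u_n\|_\infty$; the paper gets this from $\|\overline{u}_l\|_\infty\lesssim l^{-(d+1)/2}$ via Sobolev embedding applied to the mollified field, whose higher derivatives are under control. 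Mollifying the velocity in turn introduces the Constantin--E--Titi commutator $\eta_l*(u\otimes u)-(\eta_l*u)\otimes(\eta_l*u)$, which must be folded into the stress being cancelled; your decomposition of $R_{n+1}$ omits it.

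The second, more substantive gap is the normalization $\sum_k a_k^2\langle\mathbb{W}_k\otimes\mathbb{W}_k\rangle=\rho(x)\,\mathrm{Id}-R_n$ together with the unproved claim $\|a_k\|_2^2\lesssim\|R_n\|_1$. Since $R_n$ is small only in $L^1$ (its $L^\infty$ norm is of order $l^{-(d+1)}\delta_{n+1}$), a constant or $L^\infty$-sized $\rho$ destroys the $L^2$ bound on the amplitudes, while a pointwise $\rho\sim|R_n|$ degenerates where $R_n$ vanishes and ruins the $C^m$ bounds on $\Gamma_k(\mathrm{Id}-R_n/\rho)$ that the oscillation and Nash errors require. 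The paper resolves this with a dyadic partition $\chi_{i,n}$ over the level sets of $|\overline{R}_l|/(\delta_n\lambda_{n-1}^{-2\alpha})$, a Chebyshev bound $|\Supp\chi_{i,n}|\lesssim 4^{-i}$ that yields the $L^1$-type amplitude estimate, and, crucially, two spatially disjoint families of Mikado flows indexed by $i\bmod 2$ so that adjacent dyadic pieces produce no cross terms in $w\otimes w$. Without some version of this (or of the regularized $\rho\sim(\ell^2+|R_n|^2)^{1/2}$ device with its attendant derivative estimates), the amplitude bounds you use downstream are unjustified. A smaller bookkeeping point: writing the building block as $\mathbb{W}_k(\lambda_{n+1}x)$ with $\mathbb{W}_k$ concentrated at scale $\mu_{n+1}^{-1}$ gives maximal frequency $\lambda_{n+1}\mu_{n+1}$, not $\lambda_{n+1}$; the correct rescaling is by the periodization parameter $\sigma_{n+1}=\lambda_{n+1}/\mu_{n+1}$, and it is $\sigma_{n+1}$ (which must dominate $\lambda_n$), not $\lambda_{n+1}$, that enters the decorrelation estimate and the constraint $\sigma_{n+1}\mu_{n+1}=\lambda_{n+1}$.
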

\begin{remark}
For example we can take $\beta=\frac{1}{200}$, $b=5$, $\alpha=10^{-6}$ independent of dimension $d$ and $a$ sufficiently large depending on some implicit constants from the computation. Such choice of $\beta$ and $b$ is definitely not optimal. By optimizing one can take larger $\beta$ as the dimension $d$ increases. However one is unlikely to get close to the critical space $H^{\frac{d-2}{2}}$ or $L^d$ without substantially new ideas. So we do not pursue additional improvement in the regularity using the current scheme in this direction.
\end{remark}
\begin{proof}[Proof of Theorem \ref{thm:l2_stationary}]
Let $(u_n,p_n,R_n)$ be the sequence obtained from Proposition \ref{prop:main_iteration} and let $0<\beta'<\beta$ First, by the Sobolev interpolation 
\begin{align*}
\sum_{k \geq n}\|u_{k} -u_{k-1} \|_{H^{\beta'}(\TT^d)} &\leq \sum_{k \geq n} \|u_{k} -u_{k-1} \|_{H^{1}(\TT^d)}^{\beta'} \|u_{k} -u_{k-1} \|_{L^{2}(\TT^d)}^{1-\beta'}\\
&\lesssim \sum_{k \geq n} \|\nabla u_{k} - \nabla u_{k-1} \|_2^{\beta'} \|u_{k} -u_{k-1} \|_2^{1-\beta'} .
\end{align*}
Then directly from the estimate \eqref{eq:main_iteration_w_n} we find that
\begin{align*}
\sum_{k \geq n}\|u_{k} -u_{k-1} \|_{H^{\beta'}(\TT^d)} &\lesssim \sum_{k \geq n} ( \l_n \delta_{n })^{\beta'}  \\
& \lesssim \sum_{k \geq n}  \l_n^{\beta' -\beta} \lesssim \l_{k}^{-\beta'}
\end{align*}
which means $u_{n}$ is uniformly bounded in $H^{\beta'}$. So by the compactness of the embedding $H^{\beta'}(\TT^d) \hookrightarrow L^{2}(\TT^d)$ after possibly passing to a subsequence which we still denote as $u_{n}$, we find that there exists a $u\in L^2(\TT)$ so that 
\begin{align*}
u_{n} \to u \quad \text{strongly in $L^2$}.
\end{align*}
Now we need to show that $u$ is a weak solution of \eqref{eq:NSE}. This is done by a standard argument. Let $\varphi(x) \in C^\infty_0(\TT^d)$. Multiplying \eqref{eq:NSRE} by $\varphi$ and integrating in space give
\begin{align*}
\int_{\TT^d} -\varphi \cdot \Delta u_n + \varphi \cdot  \D (u_n \otimes u_n )  +\varphi \cdot \nabla p_n = \int_{\TT^d} \varphi \cdot  \D R_n.
\end{align*}
Using the fact that $u_n$ is divergence-free and integrating by parts we find that
\begin{align*}
\int_{\TT^d}  u_n\cdot  \Delta\varphi +  \int_{\TT^d} u_n\cdot (u_n\cdot\nabla) \varphi       - \int_{\TT^d} \nabla \varphi : R_n=0.
\end{align*}
Due to the strong convergence of $u_n$ in $L^2$ the first two terms converge to their natural limit:
\begin{align*}
& \Big| \int_{\TT^d}  u_n\cdot  \Delta\varphi - \int_{\TT^d}  u \cdot  \Delta\varphi\Big| \leq  \|u_n-u \|_2 \|\Delta\varphi \|_2 \to 0  \quad \text{as $n \to \infty$};
\end{align*}
and
\begin{align*}
 \Big| \int_{\TT^d} u_n\cdot (u_n\cdot\nabla) \varphi - \int_{\TT^d} u \cdot(u \cdot\nabla) \varphi \Big| & \leq \Big| \int_{\TT^d} (u_n-u)\cdot (u_n\cdot\nabla) \varphi -   u \cdot((u-u_n) \cdot\nabla) \varphi \Big| \\
& \leq \|u-u_n \|_2\|u_n \|_2 \| \nabla  \varphi \|_\infty  +\|u-u_n \|_2 \|u \|_2 \| \nabla  \varphi \|_\infty  \\
& \to 0 \quad \text{as $n \to \infty$}.
\end{align*}
By the estimate \eqref{eq:hypothesis1} it follows that
$$
R_n \to 0  \quad \text{strongly in $L^1$},
$$
and then
\begin{align*}
& \Big|\int_{\TT^d} \nabla \varphi : R_n \Big|\leq \|R_n \|_1 \|\nabla \varphi \|_\infty  \to 0   \quad \text{as $n \to \infty$} .
\end{align*}
So $u \in H^{\beta'}$ for some $\beta'>0$ and verifies the weak formulation of \eqref{eq:NSE}. To recover the pressure $p$ associated to the solution $u$ we can use the formula
\begin{align*}
p_n = \Delta^{-1} \D \D (u_n \otimes u_n +  R_n ).
\end{align*}
To ensure $p_n$ converges to some $p$ in $L^r$ for some $r>1$ we need better convergence of $u_n$ and $R_n$. This can be done by using the fact that $u_n \to u$ in $H^{\beta'}$ to obtain that there is some small $\epsilon>0$ so that after possible relabeling
\begin{align*}
&u_n \to u \quad \text{strongly in $L^{2+2\epsilon}$}\\
&R_n \to 0 \quad \text{strongly in $L^{1+\epsilon}$}.
\end{align*}
Then by the $L^p$ boundedness of the Riesz transform for $p>1$ we know
\begin{align*}
p_n \to p \quad \text{strongly in $L^{1+\epsilon}$}
\end{align*}
for some $p \in L^{1+\epsilon}$.
\end{proof}

\subsection{The perturbation \texorpdfstring{$w_n$ }{wn }}\label{subsection:perturbation}
The main task is to construct $w_n$ given $(u_{n-1},p_{n-1},R_{n-1})$. The exact definition for the precise $w_n$ will be given later in Section \ref{Section:proof}. It should be noted that the exact scheme is more complicated than what we describe here.

We aim to design the $w_n$ so that it gives rise to a new solution triplet $(u_{n},p_n,R_n)$ verifying \eqref{eq:hypothesis1}, \eqref{eq:hypothesis2} and \eqref{eq:hypothesis3}. To the leading order $w_n$ will be of the form 
$$
w_n(x)=\sum_{i,k} a_{i,k}(R_{n-1}) \psi_{i,k}^{\mu_n}(\sigma_n x)
$$
where $a_{i,k}$ are the coefficients for the concentrated Mikado flow that will be defined by $R_{n-1}$ and have low frequency $\sim \l_{n-1}$, the variable $\mu_n$ is concentration parameter so that each $\psi_{i,k}^{\mu_n}(x)$ is supported in some cylinder with radius $\mu_n$ and $\sigma_n$ is oscillation parameter so that $\psi_{i,k}^{\mu_n}(\sigma_n \cdot)$ is $\sigma_n^{-1} \TT^d$-periodic and supported on cylinders of radius $\l_n^{-1}$ on $\TT^d$. Thus $w_n$ has frequency $\l_n$ in the sense that $\l_n =\sigma_n \mu_n$. 

To explain the role of $\sigma_{n}$ and $\mu_n$ let us recall that for the new Reynolds stress $R_n$ we need to solve the divergence equation:
\begin{align*}
\D R_n = &\underbrace{\D \Delta w_n}_{\text{Linear error}} + \underbrace{\D ( w_{n}\otimes w_n +R_{n-1} )}_{\text{Oscillation error}}\\
&+ \underbrace{\D ( u_{n-1}\otimes w_n+  w_n\otimes  u_{n-1} )}_{\text{Quadratic error}} +\nabla ( p_n-p_{n-1}).
\end{align*}

The idea of convex integration is to use the interaction $w_n\otimes w_n$ to balance the previous Reynolds stress $R_n$ in the sense that
\begin{equation}
\D ( w_{n}\otimes w_n +R_n ) + \nabla p_n = \text{High frequency error term}.
\end{equation}
And more importantly we need to do this while keeping the ``Linear error'' under control. So it is required for $w_n$ to have small intermittent dimension $D < d-2$. On the other hand in order to control the ``Oscillation error'' we need to have large spacing between each Fourier mode of $w_n$ which is parametrized by $\sigma_{n}$.

The role of $\sigma_n$ is to ensure $|\psi_{i,k}^{\mu_n}(\sigma_n \cdot)|^2$ only has Fourier modes of multiples of $\sigma_n $, namely
\begin{align*}
|\psi_{i,k}^{\mu_n}(\sigma_n \cdot)|^2 =\sum_{m\in \mathbb{Z}^d } \widehat{|\psi_{i,k}^{\mu_n}|^2}(m) e^{2\pi \sigma m \cdot x}
\end{align*}
such that the ``High frequency error term'' obeys the right inductive estimate. To this end we will invoke a commutator-type estimate that takes advantage of the fast oscillation of $|\psi_{i,k}^{\mu_n}(\sigma_n \cdot)|^2$, for which it is required that
\begin{align}\label{eq:l_n_sigma}
\l_{n-1} \ll \sigma_{n}.
\end{align}

On the other hand since $\psi_{i,k}^{\mu_n}$ is designed to be supported on small cylinders of radius $\mu_n$, one expects the saturation of the Bernstein inequality up to the exponent $d-1$:
\begin{equation}
\| \psi_{i,k}^{\mu_n} \|_p \sim \mu_n^{\frac{d-1}{2} -\frac{d-1}{p}}  
\end{equation}  
namely $ \psi_{i,k}^{\mu_n}$ is of intermittency dimension $D=1$. This is also the reason our construction only works in dimensions $d\geq 4$ since for the 3D Navier-Stokes equations the solution is regular if the intermittency is equal of greater than $3-2=1$. And as we shall see in the following discussion, the intermittent dimension of the perturbation $w_n$ is strictly bigger than $1$.

Taking the fast oscillation parameter $\sigma_{n}$ into account the ``Linear error'' verifies
\begin{equation}
\begin{cases}
\|\nabla w_n \|_1 \lesssim \delta_{n+1}^\frac{1}{2} \mu_n^{-\frac{d-1}{2}} \l_n &\quad \\
 \|\nabla w_n \|_2 \lesssim \delta_{n+1}^\frac{1}{2}   \l_n&
\end{cases}\text{with $\l_n =\sigma_{n} \mu_n$}.
\end{equation}
To see what is the intermittency dimension of $w_n$, let $ \mu_n=\l_n^{\mu}$ and $\sigma_n=\l_n^{\sigma}$ for some $\mu+\sigma =1$. Then by simple algebra and the definition of intermittency in term of Littlewood-Paley decomposition (cf. Section \ref{subsection:comments_intermittency}) we find that
$$
d-D= \mu (d-1  ).
$$
Since $\mu<1$ due to the requirement that $\l_{n-1} \ll \sigma_n$, we can infer that $D>1$, which is the reason that the construction breaks down in 3D. Furthermore, to make sure the ``Linear error'' is small in $L^1$ we need 
\begin{align}\label{eq:mu_n_sigma}
\sigma_{n} \ll \mu_{n}
\end{align}
which will ensure that $D < d-2$ when $d\geq 4$, cf. Section \ref{subsection:comments_intermittency}.
Then \eqref{eq:mu_n_sigma} and \eqref{eq:l_n_sigma} together imply that $\l_{n-1} \ll \l_{n}$, namely the frequency gap $b\gg 1$. In view of the quadratic relation
\begin{align*}
\|w_n \otimes w_n \|_1 \sim \| R_{n-1}\|_1,
\end{align*}
the regularity of $w_n$ is determined by $R_{n-1}$ so the large gap $b\gg 1$ results in a very small amount of regularity of $w_n$.

\section{Concentrated Mikado flows}\label{Section:mikado}
In this section, the building blocks of the solution sequence are constructed. Based on a variation of the Mikado flows introduced by Daneri and  Sz\'ekelyhidi Jr. in \cite{MR3614753}, we called these building blocks concentrated Mikado flows. As pointed out in the introduction, the idea of increasing the concentration of the flows is not new. Very recently, we learned that Modena and Sz\' ekelyhidi Jr. had used a similar idea to tackle the nonuniqueness problem for transport equation and continuity equation, see \cite{1712.03867,1806.09145}. However, being a systems of equations, the Navier-Stokes equations are fundamentally different than the transport equation and the continuity equation, which are both scalar equations. Here we point out some of the major differences between our construction and the ``Mikado densities'' and ``Mikado fields'' constructed by  Modena-Sz\' ekelyhidi Jr. in \cite{1712.03867,1806.09145}.
\begin{enumerate}
\item The error terms in \cite{1712.03867,1806.09145} (counterparts of the Reynolds stress in our setting, see Section \ref{subsection:NSRsystem}) are not matrices but vectors, therefore one does not need the geometric lemma for the space of symmetric traceless matrices, i.e. Lemma \ref{lemma:geometric}.
\item To resort the ``leakage'' when partitioning the Reynolds stress, we use another index $i$ so that there are multiple flows in the same direction but at different locations.
\item The estimates for the perturbations in \cite{1712.03867,1806.09145} are scaling invariant, while in our case scalings are much more complicated since we will take advantage of the superexponetial nature of \eqref{eq:def_for_l_n_d_n} using a commutator estimate, Proposition \ref{prop:inversediv_commutator2}.
\end{enumerate}

\subsection{The velocity profile \texorpdfstring{$\psi_{i,k}^\mu$}{psi}}\label{subsection:profile_function}
We first choose the velocity profile of the flow in this subsection. We fix a profile function $\psi: \RR  \to \RR  $ with $\Supp \psi \subset [ 1/2,1]$ so that we have 
\begin{align}\label{eq:profile_low_frequency}
\int_{\RR}  \psi\, dx =0 .
\end{align}

Let $\mathbb{K} \subset \mathbb{Z}^d$ be a given finite set of lattice vectors and $  N \in \NN $. Since $d \geq 4$ we can then choose a collection of points $p_{ i,k}$ for any $k\in \mathbb{K}$, $ 0\leq i\leq N $ and a number $\mu_0 >0$ with the following properties:
Let 
$$
l_{i,k} := \{p_{i,k}+t k+m:t\in \RR , m\in  \mathbb{Z}^d \} \subset \TT^d
$$ 
be the $\TT^d$-periodization of the line passing through $p_{i,k}$ in the $k$ direction. Since $\TT^d = \RR^d/\mathbb{Z}^d$ and $k \in \mathbb{K}  \subset \mathbb{Z}^d$, the line only goes around the box $Q^d=[0,1]^d$ finitely many times. So if we let 
$$
N_{\delta}(l_{i,k}) :=\{x+h: x\in l_{i,k},\, |h|\leq \delta \}
$$ 
denote the closed $\delta$-neighborhood of $l_{i,k}$, then
\begin{align*}
N_{ \mu_0^{-1}}(l_{i,k}) \cap N_{\mu_0^{-1}}(l_{j,k'})=\emptyset \quad \text{if  }  k\neq k' \,\, \text{or} \,\, i\neq j .
\end{align*}

For each direction $k \in \mathbb{K}$, any $0\leq i \leq N$ and any $\mu \geq \mu_0$ we define a profile function $\psi_{i,k}^\mu$ as 
\begin{align}\label{eq:def_psi}
\psi_{i,k}^\mu(x) =c_{k,p}   \mu^\frac{d-1}{2} \psi ( \mu \text{dist}(x,l_{i,k}))
\end{align}
where $c_{k,p} $ are normalizing constants so that 
\begin{align}\label{eq:def_psi_l_2}
\fint_{\TT^d } |\psi_{i,k}^\mu(x) |^2 dx =1 .
\end{align}
It is easy to see that due to \eqref{eq:profile_low_frequency}, we have
\begin{align}\label{eq:def_psi_osc}
\int_{\TT^{d} }  \psi_{i,k}^\mu(x) dx= 0  .
\end{align}
Indeed, to show \eqref{eq:def_psi_osc} one can use cylindrical coordinates along the direction $k$ as $(z,r,\theta_1,\dots,\theta_{d-2})$. Then 
\begin{align*}
\int_{\TT^d }   \psi ( \mu\text{dist}(x,l_{i,k})) dx
&=\int \prod_{j} f_j(\theta_1,\dots ,\theta_{d-2})^{\alpha_j}  \psi (  \mu r) dz dr d\theta_1 \dots d\theta_{d-2} \\
& =0.
\end{align*}

\subsection{Definition of concentrated Mikado flows}
It is clear that $\nabla \psi_{i,k}^\mu \cdot k =0$ since $\psi_{i,k}^\mu$ is a smooth function on $\TT^d $ whose level sets are concentric periodic cylinders with axis $l_k$.  Immediately we have the following properties.
\begin{enumerate}
\item Every $\psi_{i,k}^\mu k$ is divergence free: $\nabla \cdot \psi_{i,k}^\mu k =0$.
\item $\psi_{i,k}^\mu k$ solves $d$-dimensional Euler equations: $\D (\psi_{i,k}^\mu k \otimes \psi_{i,k}^\mu k )= \psi_{i,k}^\mu k \cdot \nabla \psi_{i,k}^\mu k=0$ .
\item These vector fields $\psi_{i,k}^\mu k$ have disjoint support: $\Supp \psi_{i,k}^\mu k \cap \Supp \psi_{j,k'}^\l =\emptyset$ if $k\neq k'$  or $i\neq j$.
\item $\psi_{i,k}^\mu k$ has intermittency dimension $D=1$, namely
\begin{align}
\| \nabla^m \psi_{i,k}^\mu  \|_p \lesssim \mu^{m} \mu^{  \frac{d-1}{2}  -\frac{d-1}{p}}.
\end{align}
\end{enumerate}

Recall that $\mathcal{S}^{d \times d}_+$ is the set of positive definite symmetric $n \times n$. The next geometric lemma allows us to form any $R$ in a compact subset of  $\mathcal{S}^{d\times d}_+$. A proof can be found in \cite{MR3340997} or \cite{MR0065993}.
\begin{lemma}\label{lemma:geometric}
For any compact subset $\mathcal{N} \subset\subset \mathcal{S}^{d\times d}_+$, there exists $\l_0 \geq 1$ and smooth functions $\Gamma_k \in C^\infty(\mathcal{N};[0,1] )$ for any $k \in \mathbb{Z}^d$ with $|k| \leq \l_0$ such that 
\begin{align*}
R = \sum_{k \in \mathbb{Z}^d, |k| \leq \l_0} \Gamma_k^2(R) k\otimes k \qquad \text{for all } \, R \in \mathcal{N} .
\end{align*}
\end{lemma}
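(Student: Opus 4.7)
The plan is to realize each $R \in \mathcal{N}$ as a positive combination of rank-one matrices $k\otimes k$ with $k\in\mathbb{Z}^d$, with coefficients that depend smoothly on $R$ and are bounded below away from zero, so that their square roots are smooth. The construction naturally splits into (i) a local linear-algebra decomposition around each point of $\mathcal{N}$, (ii) a global patching via a partition of unity on $\mathcal{N}$, and (iii) a baseline adjustment to guarantee strict positivity and enable taking smooth square roots.

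For step (i), I would first show that for $\l_0$ sufficiently large the convex cone generated by $\{k\otimes k : k\in \mathbb{Z}^d,\,0<|k|\leq \l_0\}$ has nonempty interior in $\mathcal{S}^{d\times d}$ and contains $\mathcal{N}$ inside that interior. The spanning property follows from the identities $(e_i\pm e_j)\otimes(e_i\pm e_j)=e_i\otimes e_i+e_j\otimes e_j\pm(e_i\otimes e_j+e_j\otimes e_i)$, which let the finite set $\{e_i,\,e_i\pm e_j\}\subset\mathbb{Z}^d$ recover a basis of $\mathcal{S}^{d\times d}$. The containment property follows by a standard density argument: using the eigendecomposition and approximating each eigenvector by a rational multiple of an integer vector, the positive combinations of $\{k\otimes k : k\in\mathbb{Z}^d\}$ are dense in $\mathcal{S}^{d\times d}_+$, and so for $\l_0$ large enough they contain the compact set $\mathcal{N}$ in their interior. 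Given this, for each fixed $R_0\in\mathcal{N}$ surjectivity of the linear map $(c_k)\mapsto \sum c_k\,k\otimes k$ combined with openness of the positive orthant produces a smooth local selection $R\mapsto(c_k^{R_0}(R))$ of strictly positive coefficients on a neighborhood $U_{R_0}$ of $R_0$.

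For step (ii), by compactness of $\mathcal{N}$ one covers it by finitely many such $U_{R^{(1)}},\ldots,U_{R^{(L)}}$. Taking a smooth partition of unity $\{\chi_l\}$ subordinate to this cover and setting
\begin{equation*}
\tilde\gamma_k(R):=\sum_{l=1}^L \chi_l(R)\,c_k^{R^{(l)}}(R)
\end{equation*}
yields a smooth nonnegative coefficient with $\sum_k \tilde\gamma_k(R)\,k\otimes k = R$. For step (iii), the issue is that $\tilde\gamma_k$ may vanish at certain $R$'s (where only some charts are active and those charts do not use the particular $k$), which would obstruct the smoothness of $\sqrt{\tilde\gamma_k}$. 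To fix this I would apply the preceding construction not to $\mathcal{N}$ itself but to the shifted set $\mathcal{N}_\epsilon:=\{R-\epsilon R_\star : R\in\mathcal{N}\}$, where $R_\star:=\sum_{|k|\leq \l_0} k\otimes k\in\mathcal{S}^{d\times d}_+$; for $\epsilon>0$ small enough $\mathcal{N}_\epsilon$ is still a compact subset of $\mathcal{S}^{d\times d}_+$, so the construction yields smooth nonnegative $\tilde\gamma_k(R)$ with $R-\epsilon R_\star=\sum_k \tilde\gamma_k(R)\,k\otimes k$, and then $\gamma_k(R):=\tilde\gamma_k(R)+\epsilon$ is smooth, bounded below by $\epsilon$, and satisfies $R=\sum_k \gamma_k(R)\,k\otimes k$.

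Finally, $\Gamma_k:=\sqrt{\gamma_k}$ is smooth and nonnegative; the $[0,1]$ normalization is enforced by a rescaling: replacing each $k$ by $Nk$ for an integer $N$ sends $\gamma_k\,(k\otimes k)$ to $(\gamma_k/N^2)\,((Nk)\otimes(Nk))$, so enlarging $\l_0$ to $N\l_0$ with $N$ large makes every $\Gamma_k$ at most $1$ uniformly over $\mathcal{N}$. The main obstacle I anticipate is step (iii): the raw partition-of-unity construction only produces a \emph{nonnegative} decomposition, and smooth nonnegative functions need not admit smooth square roots, so the baseline shift by $\epsilon R_\star$ is the essential trick without which the lemma would fail as stated.
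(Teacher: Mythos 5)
Your argument is correct and is essentially the standard proof of this lemma; the paper itself gives no proof but refers to Nash and to Sz\'ekelyhidi's lecture notes, where the same two ingredients appear — every positive definite matrix lies in the interior of the convex cone generated by $\{k\otimes k:\ k\in\mathbb{Z}^d\}$ (so a finite $\l_0$ suffices on a compact set), followed by local smooth selections of positive coefficients glued with a partition of unity. One small remark: step (iii) is not actually needed, since each local chart already furnishes \emph{strictly} positive coefficients for every $k$, so the partition-of-unity average $\tilde\gamma_k$ is automatically bounded below on the compact set $\mathcal{N}$; the shift by $\epsilon R_\star$ is nonetheless a harmless and clean way to enforce this.
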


Finally we can define the concentrated Mikado flows $W_i^{\mu}(R,x):\mathcal{B}\times \TT^d \to \RR^d$ as follows. We first apply Lemma \ref{lemma:geometric} with $\mathcal{N}= \mathcal{B}$, where
$$
\mathcal{B}=\{R \in \mathcal{S}^{d\times d}_+: |\Id -R| \leq \frac{1}{2}  \}
$$ 
is the ball of radius $1/2$ centering at $\Id $ in $\mathcal{S}^{d\times d}_+$. This fixes the direction set $\mathbb{K} =\{k \in \mathbb{Z}^d: |k| \leq \l_0\} $, where $\l_0$ is obtained from Lemma \ref{lemma:geometric}. Then given $N \in \NN$, by Section \ref{subsection:profile_function} there is a $\mu_0 >1$ depending on $N$, $d$ and $\mathbb{K}$ so that for any $\mu \geq \mu_0$ we let 
\begin{equation}
W_i^{\mu}(R,x) = \sum_{k \in \mathbb{Z}^d } \Gamma_k (R)\psi_{i,k}^\mu k \quad \text{for any $R \in \mathcal{B}$ and $0\leq i \leq N$}.
\end{equation}

\begin{remark}
The first lower index $i$ in $\psi_{i,k}^\mu k$ is to have multiple flows in the same direction $k$. So each flow $W_i^{\mu}(R,x)  $ can only interact with itself to recover the matrix $R$ as shown in the lemma below. This ensures the proper separation properties and is needed to control the ``leakage'' when partitioning the Reynolds stress in Section \ref{Section:proof}. 
\end{remark}

Let us summarize the properties in the lemma below for future reference. 
\begin{lemma}\label{lemma:mikado_R}
Suppose $d \geq 4$ and let $\mathcal{B}$ be the ball of radius $1/2$ centering at $\Id $ in $\mathcal{S}^{d\times d}_+$. For any $N \in \NN $ there exists $\mu_0 >1$ depending on $N$ and $d$ so that the divergence-free smooth vector fields $W_i^{\mu}(R,x): \mathcal{B}\times \TT^d \to \RR^d$ defined above for $\mu \geq \mu_0$ and indexed by $0 \leq i \leq N$ have the following properties.
\begin{align}
&W_i^{\mu}(R,x)\otimes W_i^{\mu}(R,x) = R + \sum_{k\in \mathbb{Z}^d} \Gamma^p_k(R)     \big[\big(\psi_{i,k}^\mu \big)^2 -  1 \big]~ k\otimes k \quad \text{for all}\,\, R \in \mathcal{N} .\\
&\Supp W_i^{\mu}(R,x) \cap \Supp W_j(R,x) =\emptyset \quad \text{if } \,i\neq j.
\end{align}
Moreover the profile function $\psi_{i,k}^\mu$ obeys the bound:
\begin{align}
\| \nabla^m \psi_{i,k}^\mu  \|_p \lesssim \mu^{m} \mu^{  \frac{d-1}{2}  -\frac{d-1}{p}} \quad \text{for any $1\leq p\leq \infty$ and $m\in \NN$}.
\end{align}
where the implicit constant depends on $i$, $k$, $p$ and $m$ but is dependent of $\mu$.
\end{lemma}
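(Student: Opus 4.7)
The plan is to verify the three statements in turn; each is essentially a bookkeeping consequence of the construction in Section~\ref{subsection:profile_function} together with the geometric lemma. For the tensor identity, I would expand $W_i^{\mu}(R,x)\otimes W_i^{\mu}(R,x) = \sum_{k,k'} \Gamma_k(R)\Gamma_{k'}(R)\,\psi_{i,k}^\mu(x)\psi_{i,k'}^\mu(x)\, k\otimes k'$ and then use the fact that for fixed $i$ the profiles $\psi_{i,k}^\mu$ and $\psi_{i,k'}^\mu$ have disjoint supports whenever $k\neq k'$ (by the tube separation arranged in the choice of base points $p_{i,k}$ and $\mu\ge \mu_0$) to collapse the double sum to the diagonal $\sum_k \Gamma_k^2(R)(\psi_{i,k}^\mu)^2\, k\otimes k$. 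Writing $(\psi_{i,k}^\mu)^2 = 1 + [(\psi_{i,k}^\mu)^2-1]$ and invoking Lemma~\ref{lemma:geometric} to identify $\sum_k \Gamma_k^2(R)\,k\otimes k$ with $R$ yields the claimed formula (the superscript $p$ in the statement should be read as $2$). The disjoint-support statement across different $i$ is then immediate from the same tube-separation property, since $N_{\mu^{-1}}(l_{i,k}) \cap N_{\mu^{-1}}(l_{j,k'}) = \emptyset$ whenever $i\neq j$.

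For the $L^p$ estimate on $\psi_{i,k}^\mu$, I would work in cylindrical coordinates adapted to the line $l_{i,k}$: an axial variable $z$ along $k$, a radial variable $r$, and $d-2$ angular variables. Because $l_{i,k}$ wraps $\TT^d$ only finitely many times, the support of $\psi_{i,k}^\mu$ unfolds into a tube of total axial length $O_k(1)$ and cross-section $\{r\le \mu^{-1}\}$. Using $\psi_{i,k}^\mu(x) = c_{k,p}\mu^{(d-1)/2}\psi(\mu\, \mathrm{dist}(x, l_{i,k}))$, a change of variables $r = \mu^{-1}\rho$ gives $\|\psi_{i,k}^\mu\|_p^p \lesssim \mu^{p(d-1)/2}\cdot \mu^{-(d-1)}\|\psi\|_p^p$, which is the stated bound for $m=0$. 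Each spatial derivative pulls down one factor of $\mu$ via the chain rule applied to $\psi(\mu\,\mathrm{dist}(\cdot,l_{i,k}))$, giving the general case $m\ge 1$ and isolating the dependence of the implicit constant on $\mathbb{K}$, $i$, $p$, $m$, and the profile $\psi$.

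The only mildly delicate point is the choice of $\mu_0$, which must be large enough that for every $\mu \ge \mu_0$ all the tubes $N_{\mu^{-1}}(l_{i,k})$ with $(i,k)\in\{0,\dots,N\}\times \mathbb{K}$ are pairwise disjoint in $\TT^d$. Since $d\ge 4$, the periodized lines $l_{i,k}$ are one-dimensional sets of codimension at least three, so the finitely many base points $p_{i,k}$ can be chosen generically so that all pairs $l_{i,k}$ and $l_{j,k'}$ stay at a uniform positive distance from each other; taking $\mu_0$ greater than the reciprocal of this minimum pairwise distance then propagates the separation to all $\mu\ge \mu_0$. I expect this geometric separation step to be the only substantive piece of the proof; once it is in place, both the tensor identity and the $L^p$ bounds reduce to the routine verifications sketched above.
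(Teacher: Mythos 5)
Your proposal is correct and follows exactly the route the paper intends: the lemma is stated as a summary of the construction in Section~\ref{Section:mikado}, and your verification (disjointness of the tubes $N_{\mu^{-1}}(l_{i,k})$ killing the off-diagonal terms, Lemma~\ref{lemma:geometric} recovering $R$ from the diagonal, and the cylindrical change of variables $r=\mu^{-1}\rho$ for the $L^p$ bounds) is precisely the intended argument, including the observation that $\Gamma_k^p$ is a typo for $\Gamma_k^2$.
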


\subsection{A commutator estimate}
Next, we need a commutator-type estimate involving functions with fast oscillation, which is crucial in obtaining the $L^2$ decay of the perturbation $w_n$. 
It should be noted that a similar result for $p=1$ and $2$ has been established in \cite{1709.10033} using a different method. However, our result here requires a weaker assumption.

\begin{proposition}[Commutator for fast oscillation]\label{prop:inversediv_commutator2}
For any small $\theta >0$ and any large $N >0$ there exists $M\in \NN$ and $\l_0 \in \NN$ so that for any $\mu ,\sigma \in \NN$ satisfying $\l_0 \leq \mu \leq \sigma^{1-\theta} $ the following holds.
Suppose $a \in C^\infty(\TT^d) $ and let $C_a >0$ be such that 
\begin{equation*}    
\|\nabla^i a \|_\infty \leq C_a \mu^i \quad \text{for any $0 \leq i\leq M$}.
\end{equation*}
Then for any $\sigma^{-1}\TT^d$ periodic function $f \in L^p(\TT^d)$, $1< p <\infty$, the following estimates are satisfied.
\begin{itemize}
\item If $p \geq 2$ is even, then 
\begin{equation}\label{eq:holder_commutator1}
\big\|   a  f   \big\|_p \lesssim_{p,d,\theta,N} \|a \|_p    \|     f\|_p + C_a \|f \|_p \sigma^{-N}.
\end{equation}
\item If $\fint_{\TT^d } f =0 $ then for $0\leq  s \leq 1$:
\begin{equation}\label{eq:inversediv_commutator2}
\big\| |\nabla|^{-1} ( a  f)  \big\|_{p} \lesssim_{p,s,d,\theta,N}   \sigma^{-1+s} \big\||\nabla|^{-s}( a  f) \big\|_p+ C_a \|f \|_p \sigma^{-N}.
\end{equation}
\end{itemize}
All the implicit constants appeared in the statement are independent of $a$, $\mu$ and $\sigma$.
\end{proposition}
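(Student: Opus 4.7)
The whole argument exploits the spectral gap between $a$ and $f$: by $M$-fold integration by parts against $\|\nabla^i a\|_\infty\leq C_a\mu^i$ one obtains
\[
|\widehat{a}(m)|\leq C_M C_a \mu^M |m|^{-M}\qquad (m\neq 0),
\]
while $\sigma^{-1}\TT^d$-periodicity forces $\hat f$ to be supported on $\sigma\mathbb{Z}^d$. Thus in the convolution identity $\widehat{af}(n)=\sum_j \hat a(n-\sigma j)\hat f(\sigma j)$, every term whose frequency $n$ sits at distance $\gtrsim \sigma$ from the sublattice $\sigma\mathbb{Z}^d$ is damped by $(\mu/\sigma)^M\leq \sigma^{-\theta M}$. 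Taking $M\geq (N+d)/\theta$ at the end converts this damping into the $\sigma^{-N}$ remainders in both parts of the proposition.

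For \eqref{eq:holder_commutator1} I would first reduce to real $a,f$ (the complex case is analogous via $|\cdot|^2 = \cdot\,\overline{\cdot}$) and compute $\|af\|_p^p=\int a^p f^p\,dx$ directly. Since $f^p$ inherits the $\sigma^{-1}\TT^d$-periodicity, it expands as $f^p(x)=\sum_{k\in\mathbb{Z}^d} d_k\, e^{2\pi i\sigma k\cdot x}$ with $d_0=\|f\|_p^p$ and $|d_k|\leq\|f\|_p^p$, so Parseval gives
\[
\|af\|_p^p = \|f\|_p^p\,\|a\|_p^p + \sum_{k\neq 0} d_k\,\widehat{a^p}(-\sigma k).
\]
The Leibniz rule upgrades the hypothesis to $\|\nabla^M a^p\|_\infty\lesssim_p C_a^p\mu^M$, and integration by parts yields $|\widehat{a^p}(\sigma k)|\leq C_{M,p}C_a^p\,\sigma^{-\theta M}|k|^{-M}$. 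Summing against $|k|^{-M}$ for $M>d$ bounds the tail by $C_a^p\|f\|_p^p\,\sigma^{-\theta M}$, and taking a $p$-th root produces the claimed sum form.

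For \eqref{eq:inversediv_commutator2} I would split $af=\mathbb{P}_{>\sigma/2}(af)+\mathbb{P}_{\leq\sigma/2}(af)$ by a smooth Littlewood-Paley cutoff at frequency $\sigma/2$. On the high-frequency piece, the multiplier $|\xi|^{-(1-s)}\chi_{|\xi|>\sigma/2}$ factors as $\sigma^{-(1-s)}$ times a $\sigma$-rescaling of a fixed Mikhlin-class symbol, so that
\[
\bigl\||\nabla|^{-1}\mathbb{P}_{>\sigma/2}(af)\bigr\|_p\leq C_{p,s}\,\sigma^{-1+s}\bigl\||\nabla|^{-s}(af)\bigr\|_p
\]
uniformly in $\sigma$ and $s\in[0,1]$. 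For the low-frequency piece the zero-mean hypothesis $\fint f=0$ removes $j=0$ from the convolution, so for every $|n|\leq\sigma/2$,
\[
\widehat{af}(n)=\sum_{j\neq 0}\hat a(n-\sigma j)\hat f(\sigma j),\qquad |n-\sigma j|\geq \sigma|j|/2.
\]
Combining the $\hat a$-decay with $|\hat f(\sigma j)|\lesssim\|f\|_1\lesssim\|f\|_p$ and summing the $\lesssim \sigma^d$ relevant lattice sites (Bernstein) gives $\bigl\||\nabla|^{-1}\mathbb{P}_{\leq\sigma/2}(af)\bigr\|_p\lesssim C_a\|f\|_p\,\sigma^{d-\theta M}$, which is $\leq C_a\|f\|_p\sigma^{-N}$ once $\theta M\geq N+d$.

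The subtlest step, I expect, is precisely this low-frequency bound: one must use the zero mean hypothesis to drop the resonant $j=0$ term (without it no decay is available), and one must verify that the rescaled multiplier on the high piece is $L^p$-bounded uniformly in both $\sigma$ and $s$. Everything else reduces to bookkeeping the exponents and enlarging $M$ in terms of $\theta$, $N$, $d$, and $p$.
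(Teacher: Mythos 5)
Your proposal is correct, and it rests on exactly the mechanism the paper uses in Appendix~C: the gap $\mu\leq\sigma^{1-\theta}$ damps every off-resonant interaction between the low-frequency $a$ and the $\sigma\mathbb{Z}^d$-supported spectrum of $f$ by $(\mu/\sigma)^M\leq\sigma^{-\theta M}$, and $M$ is then enlarged to beat $\sigma^{-N}$. The implementations differ in both halves, though. For \eqref{eq:holder_commutator1} the paper isolates $\overline{|a|^p}\,\overline{|f|^p}$ and bounds the remainder by the pairing $\int|\nabla|^{M}(a^p-\overline{|a|^p})\,|\nabla|^{-M}(|f|^p-\overline{|f|^p})$, estimating the second factor by $\sigma^{-M+d}\|f\|_p^p$ via $L^1\hookrightarrow H^{-d}$ and the $\sigma^{-1}\TT^d$-periodicity; your Parseval computation with the explicit expansion of $f^p$ over $\sigma\mathbb{Z}^d$ is the same duality argument written on the Fourier side. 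For \eqref{eq:inversediv_commutator2} the paper splits $a=\mathbb{P}_{\leq 2^{-4}\sigma}a+\mathbb{P}_{\geq 2^{-4}\sigma}a$, noting that $(\mathbb{P}_{\leq 2^{-4}\sigma}a)f$ is automatically supported at frequencies $\gtrsim\sigma$ (this is where $\fint f=0$ enters, through $\mathbb{P}_{\geq 2^{-1}\sigma}f=f$) while $\|\mathbb{P}_{\geq 2^{-4}\sigma}a\|_\infty\lesssim C_a\sigma^{-N}$ by the same integration-by-parts decay you invoke; you instead split the product $af$ at frequency $\sigma/2$ and kill its low-frequency part through the off-diagonal decay of $\hat a$, using $\fint f=0$ to drop the resonant $j=0$ term. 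The two splittings are interchangeable, and yours lands slightly more directly on the stated right-hand side $\sigma^{-1+s}\||\nabla|^{-s}(af)\|_p$. One bookkeeping remark: after taking the $p$-th root in \eqref{eq:holder_commutator1} your tail is $C_a\|f\|_p\sigma^{-\theta M/p}$, so $M$ must be taken of size $p(N+d)/\theta$ rather than $(N+d)/\theta$; the paper's $M_{\theta,p,N}$ carries the same $p$-dependence, so this is a shared imprecision in the quantifier order of the statement, not a gap in your argument.
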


\begin{remark}
In fact, \eqref{eq:inversediv_commutator2} also holds for other $-1$ degree homogeneous Fourier multipliers, for example the inverse divergence operator $\mathcal{R}$ defined by \eqref{eq:def_operatorR} in Section \ref{Section:proof}.
\end{remark}

The proof of Proposition \ref{prop:inversediv_commutator2} is not difficult and we give a one using the Littlewood-Paley decomposition in the Appendix \ref{appendix:proofofproposition}. 

The significance of Proposition \ref{prop:inversediv_commutator2} is clearer upon recalling the Ansatz for the velocity increment: 
\begin{eqnarray}\label{eq:wn_ansatz}
w_n(x)=\sum_{i,k} a_{i,k}(R_{n-1}) \psi_{i,k}^{\mu_n}(\sigma_n x).
\end{eqnarray}
So by the usual H\"older's inequality we can get the trivial estimate
\begin{align*}
\|w_n \|_2 \leq \sum_{i,k} \big\| a_{i,k}(R_{n-1}) \big\|_\infty \big\| \psi_{i,k}^{\mu_n}(\sigma_n \cdot) \big\|_2
\end{align*}
which is too big for the final solution $u$ to be in $L^2$.

In contrast, if taking into account the fast oscillation $\l_{n-1 } \ll \sigma_{n}$ we would apply Proposition \ref{prop:inversediv_commutator2}, and then \eqref{eq:wn_ansatz} can be estimated as
\begin{align*}
\|w_n \|_2 \lesssim \big\| a_{i,k}(R_{n-1}) \big\|_2 \big\| \psi_{i,k}^{\mu_n}(\sigma_n \cdot) \big\|_2  ,
\end{align*}
and since 
$$ 
\big\| a_{i,k}(R_{n-1}) \big\|_2 \ll \big\| a_{i,k}(R_{n-1}) \big\|_\infty
$$
we shall see this approach indeed gives the desired bound for final solution $u$ to be in $L^2$.

\section{Proof of Proposition \ref{prop:main_iteration}}\label{Section:proof}
\newcounter{steps}[section]
Let us give the main steps of the proof here. We prove by induction. It is obvious that $(0,0,0)$ verifies all the inductive estimates \eqref{eq:hypothesis1}, \eqref{eq:hypothesis2} and \eqref{eq:hypothesis3}. Given a solution triplet $(u_{n-1}, p_{n-1}, R_{n-1})$ verifying \eqref{eq:hypothesis1}, \eqref{eq:hypothesis2} and \eqref{eq:hypothesis3} with $n-1$ in place of $n$ we aim to construct a new triplet $(u_{n}, p_{n}, R_{n})$ verifying the same set of estimates so that for the velocity increment $u_n-u_{n-1}$ it holds \eqref{eq:main_iteration_w_n}. The main part of the proof is to find the proper velocity increment, which consists of the following several steps. We first set up all the constants except $a$ in the beginning to convince the reader there is no loophole. Then we mollify the solution triplet $(u_{n-1}, p_{n-1}, R_{n-1})$ to obtain a mollified solution $(\ul, \pl, \Rl)$ and we derive several standard estimates for $(\ul, \pl, \Rl)$. The goal is then to find the perturbation $w_n$ so that $u_{n} = \ul +w_n$ verifies the inductive hypothesis \eqref{eq:hypothesis2} and \eqref{eq:hypothesis3}. After mollification, we introduce a partition to properly decompose the Reynolds stress $\Rl$ so that Lemma \ref{lemma:mikado_R} applies. Once the decomposition of the Reynolds stress $\Rl$ is done, the velocity perturbations $w_n^p$ and $w_n^c$ can be defined, where $w_n^p$ is the principle part and $w_n^c$ is a lower order correction to ensure the divergence-free condition of $w_n$. We then derive a number of estimates for $w_n^p$ and $w_n^c$ from their definitions. After all the preparations, the new Reynolds stress $R_n$ can be solved from a divergence equation, and the last inductive hypothesis \eqref{eq:hypothesis1} follows by using all the established estimates for $\ul$, $w_n^p$ and $w_n^c$. 

\stepcounter{steps}
\subsection*{Step \arabic{steps}: Set up constants.}
We fix all the constants $\beta$, $b$, $\alpha$ appeared in the statement except $a$. The value of $a$ will be required to be larger several times in the following, mainly to absorbed various implicit constants from computations.
\begin{itemize} 
\item First, let $\beta=\frac{1}{200}$, $b=5$ and $\alpha= 10^{-6}$ regardless of the dimensions $d\geq 4$. 
\item Second, we define respectively the concentration and oscillation parameters $\mu_n,\sigma_{n}$ as
\begin{align}\label{eq:def_mu_n_sigma_n}
\mu_n = \l_n^{\frac{3}{4}} \quad\text{and} \quad \sigma_n = \l_n^{\frac{1}{4}} .
\end{align} 
\item Third, let $l>0$ be as 
\begin{equation}\label{eq:def_length}
l=   \frac{\delta_{n }^{1/2} }{\delta_{n-1 }^{1/2} \l_{n-1 }^{1+\alpha} }  .
\end{equation}
Then $\l_{n-1} \leq l^{-1} \leq \l_{n-1}^{1+ \frac{1}{40}} $ and there exists $\theta>0$ so that $l^{-1-\theta} \leq \l_n$.
\end{itemize}
 
Let us explain the role of each parameter. The parameter $\beta$ is the resulting regularity of the final solution $u$, i.e. $u\in H^{\beta}$. Since we aim to prove the existence of stationary weak solutions in $L^2$, we need the compactness of some embedding $H^{\beta} \hookrightarrow L^2 $ to obtain a convergent sequence.

The parameter $b$ is the exponential frequency gap between each step of the iteration. Recall that the frequency is given by $\l_n =a^{(b^n)}$. So $\l_{n-1}^b =\l_n $ and hence $\l_{n-1} \ll \l_n$.

Compared with $\beta$, the parameter $\alpha>0$ is very small which will be used for absorbing lower order factors in conjunction with a large $a\gg 1$. More precisely, in the sequel we often use the following fact: for any constant $C>1$ there is a sufficiently large $a>0$ so that $C \l_{n-1}^{-\alpha} \leq 1$ (recall that $n\geq 2$ in the proof so in view of \eqref{eq:def_for_l_n_d_n} this is possible).
\stepcounter{steps}
\subsection*{Step \arabic{steps}: Mollification.}

This step is to fix the problem of possible loss of derivative which is usual in convex integration. Since by mollifying we inevitably introduce new errors from the nature of noncommutativity between \eqref{eq:NSRE} and mollification, the length scale that we use for the mollification is expected to be larger than $\l_{n-1}$ so that the new errors are not too large. This is the reason that we choose the length scale $l$ as in \eqref{eq:def_length}. Fix a standard mollifying kernel $\eta$ in space and define the mollifications of the velocity field, pressure scaler and Reynolds stress
\begin{align*}
&\ul = \eta_l * u_{n-1}\\
&\pl = \eta_l * p_{n-1} +   \eta_l * (u_{n-1}^2) - |\eta_l* u_{n-1}|^2 \\
& \Rl = \eta_l * R_{n-1} + \eta_l * (u_{n-1}  \mathring{\otimes}  u_{n-1} ) - ( \eta_l* u_{n-1}  \mathring{\otimes}  \eta_l* u_{n-1})    
\end{align*}
where $l$ is the length parameter defined by \eqref{eq:def_length} and $\mathring{\otimes}$ is the trace-less tensor product $f\mathring{\otimes} g:= f_i g_j - \delta_{ij} f_i g_j $.

And hence we have a mollified solution triplet $ (u_l,p_l,\overline{R}_l )$ verifying the following mollified system:
\begin{equation}\label{eq:NSRE_mollified}\tag{Mollified-NSR}
\begin{cases}
- \Delta  \ul +\D (\ul \otimes \ul) +\nabla \pl =\D \Rl &\\
 \D \ul =0 &
\end{cases} 
\end{equation}

We can derive the following set of estimates by standard properties of mollifier. 
\begin{lemma}\label{lemma:mollidied_u_estimate}
For any $m \in \NN$ we have
\begin{align}
\|\ul  - u_{n-1} \|_{2}  & \lesssim \delta_{n }^{\frac{1}{2}} \l_{n-1}^{- \alpha}\\ 
\|\nabla^{m+1 } \ul \|_2 & \lesssim l^{-m} \delta_{n-1 }^{\frac{1}{2}} \l_{n-1}\\
\|\nabla^m \Rl \|_1 & \lesssim l^{-m}\delta_{n } \l_{n-1}^{-2 \alpha} ,
\end{align}
where all implicit constants are independent of $n$ and $l$.
\end{lemma}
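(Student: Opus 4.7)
The three bounds are standard mollification estimates, and the plan is to reduce each one to the inductive hypotheses \eqref{eq:hypothesis1} and \eqref{eq:hypothesis3} together with the choice of length scale \eqref{eq:def_length}. Throughout I will use only the two properties of a standard mollifier $\eta_l$: that $\|\eta_l * f\|_p \leq \|f\|_p$ for $1\leq p\leq \infty$, that $\|\nabla^m(\eta_l * f)\|_p \lesssim l^{-m}\|f\|_p$, and that $\|\eta_l*f - f\|_p \lesssim l\|\nabla f\|_p$.

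For the first bound, I would apply the mollification error estimate directly to $u_{n-1}$, giving $\|\ul - u_{n-1}\|_2 \lesssim l \|\nabla u_{n-1}\|_2$. Invoking \eqref{eq:hypothesis3} and plugging in the explicit value $l = \delta_n^{1/2}/(\delta_{n-1}^{1/2}\l_{n-1}^{1+\alpha})$ yields $l\,\l_{n-1}\delta_{n-1}^{1/2} = \delta_n^{1/2}\l_{n-1}^{-\alpha}$, which is exactly the claimed estimate. For the second bound, I would move one derivative onto $u_{n-1}$ and the remaining $m$ onto the kernel, using the standard estimate $\|\nabla^{m+1}(\eta_l * u_{n-1})\|_2 \lesssim l^{-m}\|\nabla u_{n-1}\|_2$, and then apply \eqref{eq:hypothesis3} again.

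The third estimate is the only one requiring more than a single Young-type estimate, because $\Rl$ is the sum of a linearly mollified piece and a quadratic commutator. For the linear piece $\eta_l * R_{n-1}$ the bound $\|\nabla^m (\eta_l * R_{n-1})\|_1 \lesssim l^{-m}\|R_{n-1}\|_1$ combined with \eqref{eq:hypothesis1} gives $l^{-m}\delta_n\l_{n-1}^{-2\alpha}$ immediately. For the commutator piece $\eta_l *(u_{n-1}\mathring{\otimes} u_{n-1}) - \ul\mathring{\otimes}\ul$ I would use the Constantin--E--Titi type identity/estimate
\begin{equation*}
\bigl\|\nabla^m\bigl(\eta_l *(u_{n-1}\otimes u_{n-1}) - \ul\otimes\ul\bigr)\bigr\|_1 \lesssim l^{2-m}\|\nabla u_{n-1}\|_2^2,
\end{equation*}
which is proved by writing the difference as $\tfrac{1}{2}\int\eta_l(y)(\delta_y u_{n-1})\otimes(\delta_y u_{n-1})\,dy$ modulo lower-order averages, bounding the space translation increment by $|y|\|\nabla u_{n-1}\|_2$, and putting the remaining $m$ derivatives on the kernel. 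Using \eqref{eq:hypothesis3} and the definition of $l$, one computes $l^2\l_{n-1}^2\delta_{n-1} = \delta_n\l_{n-1}^{-2\alpha}$, so this contribution fits into the desired bound as well.

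The only thing that needs watching is that all the powers of $l$, $\l_{n-1}$, $\delta_{n-1}$ and $\delta_n$ match up after substituting \eqref{eq:def_length}; there is no genuine analytic difficulty and no step is really the main obstacle. If one wanted to nitpick, the one place to be careful is making the constants in the Constantin--E--Titi commutator depend only on the mollifier (and not on $m$ in a way that would ruin the bound), which is standard and follows by differentiating under the integral sign in the translation representation above.
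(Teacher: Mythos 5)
Your proposal is correct and follows essentially the same route as the paper: the first two bounds by standard mollifier estimates combined with \eqref{eq:hypothesis3} and the choice of $l$ in \eqref{eq:def_length}, and the third by splitting $\Rl$ into the mollified $R_{n-1}$ (controlled via \eqref{eq:hypothesis1}) plus the quadratic commutator, which the paper handles with exactly the Constantin--E--Titi estimate you sketch (its Proposition \ref{prop:CET_commutator}), yielding the same identity $l^{2}\l_{n-1}^{2}\delta_{n-1}=\delta_{n}\l_{n-1}^{-2\alpha}$.
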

\begin{proof}
By the hypothesis \eqref{eq:hypothesis3} and the obvious estimate for mollifier, we have that
\begin{align*}
 \|\ul  - u_{n-1} \|_{2} \leq l^{} \| \nabla u_{n-1} \|_2 \lesssim \delta_{n-1}^{\frac{1}{2}} \l_{n-1} l \lesssim \delta_{n }^{\frac{1}{2}} \l_{n-1}^{- \alpha}
\end{align*}
where we have used \eqref{eq:def_length} to get the last inequality.

Again by the standard property of mollifier, we obtain that
\begin{align*}
\|\nabla^{m+1 } \ul \|_2 \lesssim  l^{-m} \|\nabla u_{n-1} \|_2 \lesssim \delta_{n-1}^{\frac{1}{2}} \l_{n-1} l^{-m}.
\end{align*}

The last inequality for $\Rl$ follows from the Constantin-E-Titi commutator estimate, Proposition \ref{prop:CET_commutator} in the Appendix \ref{appendix:CET}:
\begin{align*}
\|\nabla^m \Rl \|_1 \lesssim  l^{-m }\|R_{n-1} \|_1 + \|\nabla u_{n-1} \|_2^2 l^{2-m} .
\end{align*}
Using the definition of the length scale \eqref{eq:def_length} and \eqref{eq:hypothesis3} we have
\begin{align*}
\|\nabla u_{n-1} \|_2^2 l^{2} =   l^{2} \delta_{n -1}    \l_{n-1}^{2}\leq   l^{-m}\delta_{n } \l_{n-1}^{-2 \alpha}  .
\end{align*}
So it follows that
\begin{align*}
\|\nabla^m \Rl \|_1 \lesssim l^{-m }\|R_{n-1} \|_1+  l^{-m}\delta_{n } \l_{n-1}^{-2 \alpha}\lesssim   l^{-m}\delta_{n } \l_{n-1}^{-2 \alpha}.
\end{align*}
\end{proof}

\begin{remark}
It is worth noting that the constants in Lemma \ref{lemma:mollidied_u_estimate} depend on $m$. This type of dependence will appear in the sequel as well. However, we will only require these higher order Sobolev norms up to some fixed order throughout the proof.
\end{remark}

\stepcounter{steps}
\subsection*{Step \arabic{steps}: Decompose the Reynolds stress}

Recall that Lemma \ref{lemma:mikado_R} is for symmetric matrices in a given compact subset $\mathcal{B} \subset \mathcal{S}^{d\times d}_+$. Since $ \Rl  $ is measured in $L^1$, we need to properly decompose $\Rl$ so that we are able to use the concentrated Mikado flows. 

Choose a smooth cutoff function $\chi:\RR^{d \times d} \to [0,1]$ so that
\begin{equation}
\chi(x)=
\begin{cases}
1 & \quad \text{if $ |x|\in [0,\frac{3}{4}]$}\\
0 & \quad \text{if $ |x| \geq 1$}
\end{cases}
\end{equation}
Now let $\chi_i(x) = \chi(4^{-i} x )$ for any $i \geq 0$ and define positive cutoff functions $\phi_i:\RR^{d \times d} \to [0,1] $ as
\begin{equation}
(\phi_i)^\frac{1}{2}(x)=
\begin{cases}
\chi_{i} -\chi_{i-1} & \quad \text{if $ i \geq 1$}\\
\chi_{0} & \quad \text{if $ i = 0$}
\end{cases}
\end{equation}
so that we have by telescoping
\begin{align*}
\phi_0^2(x) + \sum_{i\geq 1} \phi^2_i(x) =1.
\end{align*}

Then define the partition for the Reynolds stress
\begin{align}\label{eq:def_R_cutoff}
\chi_{i,n}(x):= \phi_i \Big( \frac{  \overline{R}_l  }{\delta_{n}\l_{n-1}^{-2 \alpha }} \Big) \quad \text{for any $i \in \NN$}.
\end{align}

We are ready to define the velocity increment. First apply Lemma \ref{lemma:mikado_R} with $N=1 $ and then let the principle part of the perturbation be:
\begin{align}\label{eq:def_w_p}
w_n^p(x)=   \sum_{i \geq 0} 2^{i+1} \delta_{n}^\frac{1}{2} \l_{n-1}^{-  \alpha } \chi_{i,n} W_{[i]}^{\mu_n}\Big(\Id - \frac{ \overline{R}_l }{4^{i+1}\delta_{n}\l_{n-1}^{-2 \alpha }}, \sigma_n x   \Big)
\end{align}
where $[i] = i \mod 2$. Since 
$$
4^{i-1 } \delta_{n}\l_{n-1}^{-2 \alpha } \leq | \overline{R}_l |\leq 4^{i }\delta_{n}\l_{n-1}^{-2 \alpha } \quad \text{for all $x \in \Supp \chi_{i,n}$}
$$
we know that
\begin{align*}
 \bigg|\frac{ \overline{R}_l }{4^{i+1}\delta_{n}\l_{n-1}^{-2 \alpha }} \bigg|\leq \frac{1}{4}
\end{align*}
and thus 
\begin{align*}
  \Id-\frac{ \overline{R}_l }{4^{i+1}\delta_{n}\l_{n-1}^{-2 \alpha }} \in \mathcal{B}\subset \mathcal{S}^{d\times d}_+ \quad \text{for all $x \in \TT^d$}.
\end{align*}
So $w_n^p$ is well-defined in view of Lemma \ref{lemma:mikado_R}. In what follows, with a slight abuse of notations we often use the shorthand
\begin{align}\label{eq:def_short_w_p}
w_n^p= \sum_{i} a_{i,k,n}  \psi_{i,k}^{\mu_n}(\sigma_n \cdot)
\end{align}
where it is understood that in the functions $\psi_{i,k}^{\mu_n}$ the lower index $i=0$ when even and $i=1$ when odd and the short notation $a_{i,k,n}$ is defined by
\begin{align}\label{eq:def_a_ikn}
a_{i,k,n}= 2^{i+1} \delta_{n}^\frac{1}{2} \l_{n-1}^{-  \alpha }   \chi_{i,n}   k \Gamma_k \Big( \Id - \frac{ \overline{R}_l }{4^{i+1}\delta_{n}\l_{n-1}^{-2 \alpha }} \Big)  .
\end{align}

Let us show that $w^p_n$ is non-trivial in the following. Recall that $\mathcal{B} \subset \mathcal{S}^{d\times d}_+ $ is the ball of radius $\frac{1}{2}$ centered at $\Id $. So $0 \not \in \mathcal{B}$, which means that
\begin{align}
\text{for any $R\in\mathcal{B} $ there exists at least a $k \in \mathbb{Z}^d$ so that }\Gamma_k   ( R ) \neq 0.
\end{align}
And then it follows that for any $x\in \TT^d$ there exists at least one $\Gamma_k$ so that
\begin{align}\label{eq:gamma_k_nontrivial}
\Gamma_k \Big( \Id - \frac{ \overline{R}_l }{4^{i+1}\delta_{n}\l_{n-1}^{-2 \alpha }} \Big) \neq 0.
\end{align}
Thus due to the partition \eqref{eq:def_R_cutoff} for this particular $k$ there exists $i \in \NN$ so that
\begin{align}\label{eq:a_k_nontrivial}
|a_{i,k,n}|> 0  \quad \text{for some $x \in \TT^d$}
\end{align}
which makes sure the non-triviality of $w_n^p$.

One notices that $w_n^p$ is not divergence-free. However we can fix this by introducing a corrector:
\begin{align}\label{eq:def_w_c}
w^c_n= - |\nabla|^{-1}  \mathcal{R}_j \bigg[ \sum_{i} \big( \D a_{i,k,n} \big) \psi_{i,k}^{\mu_n}(\sigma_n \cdot) \bigg]
\end{align}
where $\mathcal{R}_j$ is the Riesz transform with symbol $ \frac{k_j}{|k|}$ for $1\leq j \leq d$. Again for better exposition we will use the short notation 
\begin{align}\label{eq:def_short_w_c}
w_n^c= - |\nabla|^{-1}  \mathcal{R}_j \sum_{i} b_{i,k,n}  \psi_{i,k}^{\mu_n}(\sigma_n \cdot)
\end{align}
where 
$$
b_{i,k,n}= \D a_{i,k,n}=2^{i+1} \delta_{n}^\frac{1}{2} \l_{n-1}^{-  \alpha }  \D  \Big[  \chi_{i,n}   k \Gamma_k    \Big].
$$
Considering the fact that to the leading order $w^p_n$ is divergence-free, the corrector is expected to be much smaller and will not be of any trouble. It is easy to check 
\begin{align*}
\D w_n =\D w^p_n+\D w^c_n =0
\end{align*}
and thanks to \eqref{eq:gamma_k_nontrivial}, \eqref{eq:a_k_nontrivial}, and \eqref{eq:def_w_c}, $w_n$ is not identically $0$.

Now that we have successfully define the velocity perturbation $w_n$, the velocity at step $n$ is then given by 
\begin{equation}\label{eq:def_u_n_w_n}
 u_{n}= \ul +w_n
\end{equation}

Note that we perturb $\ul$ not $u_{n-1}$ since the mollified velocity field verifies much nicer estimates than $u_{n-1}$. If one instead uses $u_{n-1}+w_n$ then the typical problem of losing derivative appears.

The new Reynolds stress $R_n$ will be computed later via \eqref{eq:NSRE_mollified}.
\stepcounter{steps}
\subsection*{Step \arabic{steps}: Estimate the coefficients \texorpdfstring{$a_{i,k,n}$}{a} and \texorpdfstring{$b_{i,k,n}$}{b}}
We show that the coefficients $a_{i,k,n}$ and $b_{i,k,n}$ have frequency $\sim l^{-1}$ and are of the correct sizes for proving regularity of $w_n$ in the next step. This requires a result on the H\"older norm of composition of functions, Proposition \ref{prop:holder_composition}.
\begin{proposition}\label{prop:coef_a_b}
There exists an index $i_{\max} \lesssim \ln \l_n^{-1}$ so that 
\begin{align*}
a_{i,k,n}=b_{i,k,n}=0\quad \text{for all $i \geq i_{\max}$}.
\end{align*}
Moreover we have for any $m\in \NN$ that
\begin{align*}
\| \nabla^m a_{i,k,n} \|_{L^\infty} & \lesssim  \delta_{n}^\frac{1}{2} \l_{n-1}^{-  \alpha } l^{ -m -2d-2}\\
\| \nabla^m (a_{i,k,n}^2) \|_{L^\infty}& \lesssim  \delta_{n}^\frac{1}{2} \l_{n-1}^{-  \alpha } l^{ -m -2d-2}\\
\| \nabla^m b_{i,k,n} \|_{L^\infty} & \lesssim \delta_{n}^\frac{1}{2} \l_{n-1}^{-  \alpha } l^{ -m -2d-3} 
\end{align*}
and 
\begin{align*}
\| a_{i,k,n} \|_{L^1} +  l \| \nabla a_{i,k,n} \|_{L^1}   & \lesssim   \delta_{n}^{\frac{1}{2}}\l_{n-1}^{-  \alpha }    \\
\| a_{i,k,n} \|_{L^2}   & \lesssim  \delta_{n}^{\frac{1}{2}}\l_{n-1}^{-  \alpha }    \\
\|  b_{i,k,n} \|_{L^1} & \lesssim l^{-1} \delta_{n}^{\frac{1}{2}}\l_{n-1}^{-  \alpha }     .
\end{align*}
\end{proposition}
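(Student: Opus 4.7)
The plan is to derive the pointwise derivative bounds first via a Fa\`a di Bruno type composition estimate (using Proposition \ref{prop:holder_composition}), and then obtain the $L^1$ and $L^2$ estimates by combining these pointwise bounds with a Chebyshev-type measure estimate for the support of $\chi_{i,n}$. The only inputs are the explicit definitions \eqref{eq:def_a_ikn} and $b_{i,k,n}=\D a_{i,k,n}$, the mollification estimates of Lemma \ref{lemma:mollidied_u_estimate}, and the fact that $\phi_i$ and $\Gamma_k$ are smooth with uniformly bounded $C^M$ norms on the compact region where they are composed.

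For the upper bound on $i_{\max}$, I would first upgrade $\|\Rl\|_1\lesssim \delta_n\l_{n-1}^{-2\alpha}$ to a pointwise estimate $\|\Rl\|_\infty\lesssim l^{-d}\delta_n\l_{n-1}^{-2\alpha}$ via Young's inequality with the mollifier kernel (using $\|\eta_l\|_\infty\lesssim l^{-d}$). Since $\chi_{i,n}(x)\neq 0$ forces $|\Rl(x)|/(\delta_n\l_{n-1}^{-2\alpha}) \gtrsim 4^{i-1}$, this gives $4^i\lesssim l^{-d}$ and hence $i\lesssim d\log l^{-1}\lesssim \log \l_n$ by the choice $l^{-1}\leq \l_{n-1}^{1+1/40}$.

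For the $L^\infty$ derivative estimates, I would rewrite
\[
a_{i,k,n}=2^{i+1}\delta_n^{1/2}\l_{n-1}^{-\alpha}\,F_{i,k}\!\left(\tfrac{\Rl}{\delta_n\l_{n-1}^{-2\alpha}}\right), \qquad F_{i,k}(X):=\phi_i(X)\,k\,\Gamma_k\!\left(\Id-\tfrac{X}{4^{i+1}}\right).
\]
On the support of $\phi_i$, the matrix $\Id-X/4^{i+1}$ lies in the compact ball $\mathcal{B}$ and each derivative of $\Gamma_k(\Id-\cdot/4^{i+1})$ picks up a tiny factor $4^{-(i+1)}$, so the composed function $F_{i,k}$ has $C^M$ norm bounded uniformly in $i$. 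Young's inequality together with Lemma \ref{lemma:mollidied_u_estimate} gives $\|\nabla^j[\Rl/(\delta_n\l_{n-1}^{-2\alpha})]\|_\infty\lesssim l^{-j-d}$, and then Proposition \ref{prop:holder_composition} yields $\|\nabla^m F_{i,k}(\Rl/(\delta_n\l_{n-1}^{-2\alpha}))\|_\infty \lesssim l^{-m-d}$ uniformly in $i$. The prefactor $2^{i+1}$ is absorbed into an additional $l^{-(d+2)}$ since $2^{i_{\max}}\lesssim \l_n$ and $l^{-1}\geq \l_{n-1}$ together produce $2^{i_{\max}}\lesssim l^{-(d+2)}$ for $a$ large enough. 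This gives the claimed bound for $\nabla^m a_{i,k,n}$; the bound for $\nabla^m(a_{i,k,n}^2)$ follows by Leibniz (and the numerical observation that $\delta_n\l_{n-1}^{-2\alpha}\leq \delta_n^{1/2}\l_{n-1}^{-\alpha}$), and the bound for $\nabla^m b_{i,k,n}$ picks up exactly one extra factor of $l^{-1}$ from the additional divergence.

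For the integral bounds, Chebyshev applied to $\Supp\chi_{i,n}$ yields
\[
|\Supp \chi_{i,n}|\leq \frac{\|\Rl\|_1}{(3/4)\cdot 4^{i-1}\,\delta_n\l_{n-1}^{-2\alpha}}\lesssim 4^{-i},
\]
which combined with $|a_{i,k,n}|\lesssim 2^{i}\delta_n^{1/2}\l_{n-1}^{-\alpha}$ on its support gives $\|a_{i,k,n}\|_1\lesssim 2^{-i}\delta_n^{1/2}\l_{n-1}^{-\alpha}\lesssim \delta_n^{1/2}\l_{n-1}^{-\alpha}$. Interpolation then produces $\|a_{i,k,n}\|_2\leq \|a_{i,k,n}\|_\infty^{1/2}\|a_{i,k,n}\|_1^{1/2}\lesssim \delta_n^{1/2}\l_{n-1}^{-\alpha}$, and the $\|\nabla a_{i,k,n}\|_1$ and $\|b_{i,k,n}\|_1$ bounds follow identically with one extra $l^{-1}$ from differentiating through $F_{i,k}(\Rl)$. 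The main (and only) technical point is the uniformity in $i$: the Chebyshev measure estimate is precisely what cancels the algebraic growth $2^i$ in $L^p$ norms, and the logarithmic size of $i_{\max}$ is what lets $2^i$ be absorbed into a fixed power of $l^{-1}$ in the pointwise bounds. No serious analytic obstacle arises; the proposition is essentially a careful bookkeeping exercise tying together the support partition with composition estimates.
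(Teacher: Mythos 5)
Your overall strategy coincides with the paper's: composition (Fa\`a di Bruno) estimates via Proposition \ref{prop:holder_composition} for the pointwise derivative bounds, and a Chebyshev estimate $|\Supp\chi_{i,n}|\lesssim 4^{-i}$ to convert the amplitude $2^{i+1}\delta_n^{1/2}\l_{n-1}^{-\alpha}$ into uniform-in-$i$ $L^1$ and $L^2$ bounds. Two steps, however, do not deliver the stated exponents as written. First, applying Proposition \ref{prop:holder_composition} to $F_{i,k}\circ Y$ with $Y=\Rl/(\delta_n\l_{n-1}^{-2\alpha})$ and only the information ``$\|F_{i,k}\|_{C^M}\lesssim 1$'' is not enough: the composition estimate carries the factors $\|Y\|_\infty^{j-1}$, and globally $\|Y\|_\infty\lesssim l^{-d-1}$, which would give $l^{-m-(d+1)m}$ rather than $l^{-m-d}$. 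One must use, as the paper does, that $\nabla^j F_{i,k}$ is supported where $|Y|\lesssim 4^{i+1}$ and decays like $4^{-ij}$, so that $\|\nabla^j F_{i,k}\|_\infty\,(4^{i+1})^{j-1}\lesssim 1$; you gesture at the support restriction but then state the final application globally.

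Second, and more substantively, the bound for $\nabla^m(a_{i,k,n}^2)$ does not follow from Leibniz applied to the bound on $\nabla^j a_{i,k,n}$: squaring $\delta_n^{1/2}\l_{n-1}^{-\alpha}l^{-j-2d-2}$ yields $\delta_n\l_{n-1}^{-2\alpha}l^{-m-4d-4}$, and since $\delta_n^{1/2}\l_{n-1}^{-\alpha}l^{-2d-2}\gg1$ this is strictly worse than the claimed $\delta_n^{1/2}\l_{n-1}^{-\alpha}l^{-m-2d-2}$. The paper instead estimates $a_{i,k,n}^2=4^{i+1}\delta_n\l_{n-1}^{-2\alpha}\,\chi_{i,n}^2\,\Gamma_k^2(\cdots)$ directly as a composition: because the composition estimate is linear in the highest derivative of the inner function, $\chi_{i,n}^2\Gamma_k^2$ obeys the \emph{same} $l^{-m-2d-2}$ bound as $\chi_{i,n}\Gamma_k$, with two factors of $4^{-i-1}$, one cancelling the prefactor $4^{i+1}$. (The weaker exponent would still suffice downstream, but it does not prove the proposition as stated.) Two smaller points: the $\|\nabla a_{i,k,n}\|_1$ and $\|b_{i,k,n}\|_1$ bounds cannot come from ``support measure times sup norm'' (that gives $4^{-i}l^{-2d-3}$, not $l^{-1}$); they require integrating $|\nabla\Rl|$ directly against $\|\nabla\Rl\|_1\lesssim l^{-1}\delta_n\l_{n-1}^{-2\alpha}$, which is presumably what you mean by ``one extra $l^{-1}$'' but should be made explicit. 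And Young's inequality with $\|\eta_l\|_\infty\lesssim l^{-d}$ controls only $\eta_l*R_{n-1}$, not the Constantin--E--Titi commutator part of $\Rl$; the paper's route via $W^{d+1,1}(\TT^d)\hookrightarrow L^\infty(\TT^d)$ applied to Lemma \ref{lemma:mollidied_u_estimate} handles both at once.
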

\begin{proof}
Let $i_{\max}$ be defined as the smallest integer so that  
\begin{equation}\label{eq:def_i_max}
4^{ i_{\max}+2} \delta_{n } \l_{n-1}^{-2\alpha} \geq \|\Rl \|_\infty.
\end{equation}

Then, from the definition of $a_{i,k,n}$ and $b_{i,k,n}$ we know that 
$$
a_{i,k,n}=b_{i,k,n}=0 \quad \text{for all $i \geq i_{\max}$. }
$$
By the Sobolev embedding $ W^{d+1,1 }(\TT^d) \hookrightarrow L^\infty(\TT^d)$ and the estimate from Lemma \ref{lemma:mollidied_u_estimate} that
\begin{align*}
\|\nabla^i \Rl \|_1 \lesssim l^{-i } \delta_{n} \l_{n-1}^{-  2\alpha }
\end{align*} 
we can conclude that
$$
\|\Rl \|_\infty \lesssim \| \Rl \|_{W^{d+1,1}} \lesssim l^{-(d+1)}   \delta_{n} \l_{n-1}^{-  2\alpha }  
$$
which together with \eqref{eq:def_i_max} implies
$$
i_{\max} \lesssim \ln \l_n^{-1}.
$$

We then estimate the H\"older semi-norms. Let $m\in\NN$. By a crude use of product rule the following pointwise bound holds
\begin{align*}
\big|\nabla^{m} a_{i,k,n}\big| & \lesssim 2^{i+1} \delta_{n}^\frac{1}{2}\l_{n-1}^{-  \alpha } \sum_{0 \leq j \leq m} \Big|\nabla^j {\chi }_{i,n}\Big| \Big| \nabla^{ m-j} \Gamma_k\Big(\Id-\frac{ \overline{R}_l }{4^{i+1}\delta_{n}\l_{n-1}^{-2 \alpha }}    \Big) \Big|  .
\end{align*}
Let $E_{i,n} = \Supp{\chi }_{i,n}$. We will estimate each summand on the support set $E_{i,n} $. Using the H\"older estimate for composition of functions, i.e. Proposition \ref{prop:holder_composition} we have
\begin{align}
\Big\| \nabla^{ m-j} \Gamma_k\Big(\Id-\frac{ \overline{R}_l }{4^{i+1}\delta_{n}\l_{n-1}^{-2 \alpha }}    \Big) \Big\|_{L^\infty(E_{i,n})} 
 & \lesssim \Big\| \nabla^{m-j} \frac{ \overline{R}_l }{4^i\delta_{n }  \l_{n-1}^{-2 \alpha } } \Big\|_{L^\infty(E_{i,n})}     \sum_{i \leq m-j}  \Big\|\frac{ \overline{R}_l }{4^i\delta_{n }  \l_{n-1}^{-2 \alpha } }   \Big\|_{L^\infty(E_{i,n})}^{i-1},
\end{align}
where we have put the H\"older norms of $\Gamma_k$ into the implicit constant. Since there are only finitely many $\Gamma_k$ (depending only on the dimension $d$), this is allowable.
And then we notice that on $E_{i,n} $ it holds that
\begin{align*}
\Rl \sim 4^{i+1}\delta_{n }\l_{n-1 }^{-2\alpha}  \quad \text{for all $x \in E_{i,n}$ }.
\end{align*}
Taking this into account we obtain
\begin{align}\label{eq:a_ikn_gamma}
\Big\| \nabla^{ m-j} \Gamma_k\Big(\Id-\frac{ \overline{R}_l }{4^{i+1}\delta_{n}\l_{n-1}^{-2 \alpha }}    \Big) \Big\|_{L^\infty(E_{i,n})} 
 & \lesssim \Big\| \nabla^{m-j} \frac{ \overline{R}_l }{4^i\delta_{n }  \l_{n-1}^{-2 \alpha } } \Big\|_{L^\infty(E_{i,n})}    .
\end{align}

We can proceed similarly for $\nabla^j {\chi }_{i,n}$ to obtain the bound:
\begin{align}
\Big\|\nabla^j {\chi }_{i,n} \Big\|_{L^\infty(E_{i,n})} 
&\lesssim \Big\| \nabla^j \frac{ \overline{R}_l }{4^i\delta_{n }  \l_{n-1}^{-2 \alpha } }   \Big\|_{L^\infty(E_{i,n})} \sum_{i \leq j} \Big\| \frac{ \overline{R}_l }{4^i\delta_{n }  \l_{n-1}^{-2 \alpha } }   \Big\|_{L^\infty(E_{i,n})}^{i-1}  \nonumber\\
&\lesssim \Big\| \nabla^j \frac{ \overline{R}_l }{4^i\delta_{n }  \l_{n-1}^{-2 \alpha } }   \Big\|_{L^\infty(\TT^d) } \label{eq:a_ikn_chi}.
\end{align}
Moreover thanks to the Sobolev embedding $W^{d+1,1}(\TT^d) \hookrightarrow L^\infty(\TT^d)$ it follows from \eqref{eq:a_ikn_gamma} and Lemma \ref{lemma:mollidied_u_estimate} that
\begin{align}
\Big\| \nabla^{ m-j} \Gamma_k\Big(\Id-\frac{ \overline{R}_l }{4^{i+1}\delta_{n}\l_{n-1}^{-2 \alpha }}    \Big) \Big\|_{L^\infty(E_{i,n})} 
& \lesssim \Big\|   \frac{ \overline{R}_l }{4^{i+1}\delta_{n }  \l_{n-1}^{-2 \alpha } }      \Big\|_{W^{m-j+d+1,1}(\TT^d) }  \nonumber\\
&  \lesssim 4^{-i-1}l^{-m+j-d-1}
\end{align}
and from \eqref{eq:a_ikn_chi} Lemma \ref{lemma:mollidied_u_estimate} that
\begin{align}
\Big\|\nabla^j {\chi }_{i,n} \Big\|_{L^\infty(E_{i,n})} 
&\lesssim  \Big\|   \frac{ \overline{R}_l }{4^i\delta_{n }  \l_{n-1}^{-2 \alpha } }   \Big\|_{W^{j+d+1,1}(\TT^d)} \nonumber \\
&  \lesssim 4^{-i-1}l^{-j-d-1}.
\end{align}

Inserting these bounds into the estimate for $\nabla^{m} a_{i,k,n}$ we have
\begin{align*}
\| \nabla^{m} a_{i,k,n} \|_{L^\infty(\TT^d)} & \lesssim 2^{i+1} \delta_{n}^\frac{1}{2}\l_{n-1}^{-  \alpha } \sum_{0 \leq j \leq m} \Big\|\nabla^j {\chi }_{i,n}\Big\|_{L^\infty(E_{i,n})} \Big\| \nabla^{ m-j} \Gamma_k\Big(\Id-\frac{ \overline{R}_l }{4^{i+1}\delta_{n}\l_{n-1}^{-2 \alpha }}    \Big) \Big\|_{L^\infty(E_{i,n})} \\
&\lesssim       \delta_{n}^\frac{1}{2} \l_{n-1}^{-  \alpha } l^{ -m -2d-2}.
\end{align*}

Observing that
\begin{align*}
\nabla^{m} (a_{i,k,n})^2  & \lesssim 4^{i+1} \delta_{n} \l_{n-1}^{-  2 \alpha } \sum_{0 \leq j \leq m} \Big|\nabla^j {\chi }^2_{i,n}\Big| \Big| \nabla^{ m-j} \Gamma_k^2 \Big(\Id-\frac{ \overline{R}_l }{4^{i+1}\delta_{n}\l_{n-1}^{-2 \alpha }}    \Big) \Big|,  
\end{align*}
and that
\begin{align*}
\nabla^{m}  b_{i,k,n}   & \lesssim 2^{i+1} \delta_{n}^\frac{1}{2} \l_{n-1}^{-    \alpha } \sum_{0 \leq j \leq m+1} \Big|\nabla^{j} {\chi }^2_{i,n}\Big| \Big| \nabla^{ m-j} \Gamma_k^2 \Big(\Id-\frac{ \overline{R}_l }{4^{i+1}\delta_{n}\l_{n-1}^{-2 \alpha }}    \Big) \Big|  ,
\end{align*}
where the H\"older norms of all factors in the summation have been estimated,
we can conclude without proof that
\begin{align*}
\| \nabla^{m} (a_{i,k,n})^2 \|_{L^\infty(\TT^d)} 
&\lesssim       \delta_{n}^\frac{1}{2} \l_{n-1}^{-  \alpha } l^{ -m -2d-2},
\end{align*}
and 
\begin{align*}
\| \nabla^{m} b_{i,k,n} \|_{L^\infty(\TT^d)} 
&\lesssim       \delta_{n}^\frac{1}{2} \l_{n-1}^{-  \alpha } l^{ -m -2d-3}.
\end{align*}
It remains to show $L^1$ and $L^2$ bounds. Since these bounds are more delicate than the ones in $L^\infty$-based norms, we estimate them in a more precise manner. By the definition of $\chi_{i,n}$ we have
\begin{align*}
\|a_{i,k,n} \|_1 &\leq  2^{i+1} \delta_{n}^\frac{1}{2}\l_{n-1}^{-  \alpha } \int_{\TT^d} \chi_{i,k,n}dx \\
& \lesssim 2^{i+1} \delta_{n}^\frac{1}{2}\l_{n-1}^{-  \alpha } |\Supp \chi_{i,n}|.
\end{align*}
and
\begin{align*}
\|a_{i,k,n} \|_2^2 &\leq  4^{i+1} \delta_{n} \l_{n-1}^{-  2\alpha } \int_{\TT^d} \chi_{i,k,n}^2 dx \\
& \lesssim 4^{i+1} \delta_{n} \l_{n-1}^{-  2\alpha } |\Supp \chi_{i,n}|.
\end{align*}
By the decomposition of the Reynolds stress $\Rl$ we know that
\begin{align*}
|\Supp \chi_{i,n}| \leq \Big|  \{x\in \TT^d: \Big| \frac{ \overline{R}_l }{\delta_{n}\l_{n-1}^{-2 \alpha }} \Big|\geq 4^{i-1}   \} \Big|
\end{align*}
from which it follows by the Chebyshev inequality that
\begin{align*}
|\Supp \chi_{i,n}| \leq  4^{-i+1}.
\end{align*}
So
\begin{align*}
\|a_{i,k,n} \|_1 & \lesssim 2^{-i} \delta_{n}^\frac{1}{2}\l_{n-1}^{-  \alpha }.
\end{align*}
and respectively
\begin{align*}
\|a_{i,k,n} \|_2^2 & \lesssim \delta_{n} \l_{n-1}^{-  2\alpha }.
\end{align*}
To bound $\nabla a_{i,k,n}$ we obtain first by the product rule and chain rule:
\begin{align*}
 |\nabla a_{i,k,n} | & \lesssim 2^{i+1} \delta_{n}^\frac{1}{2}\l_{n-1}^{-  \alpha }  \bigg(    | {\chi }_{i,n} | | \nabla  \Gamma_k |  \Big|\frac{ \nabla \overline{R}_l }{4^{i+1}\delta_{n}\l_{n-1}^{-2 \alpha }} \Big|  +      |   \Gamma_k | |\nabla  \phi_i | \Big|\frac{ \nabla \overline{R}_l }{4^{i+1}\delta_{n}\l_{n-1}^{-2 \alpha }} \Big|         \bigg) .
\end{align*}
Then using the obvious bounds 
\begin{align*}
& | {\chi }_{i,n} | \leq 1, \quad |\nabla  \phi_i | \lesssim 4^{-i}\\
& |   \Gamma_k | \lesssim 1, \quad |  \nabla \Gamma_k | \lesssim 1 
\end{align*}
we can estimate $L^1$ norm as follows:
\begin{align*}
\|\nabla a_{i,k,n} \|_1 &\lesssim  2^{i+1} \delta_{n}^\frac{1}{2}\l_{n-1}^{-  \alpha } \int_{\TT^d}   \Big|  \frac{\nabla \overline{R}_l }{4^{i+1}\delta_{n}\l_{n-1}^{-2 \alpha }}     \Big| dx  \\
&\lesssim 2^{-i} \delta_{n}^{- \frac{1}{2}}\l_{n-1}^{  \alpha } \|\nabla \Rl \|_1\\
&\lesssim l^{-1} \delta_{n}^{\frac{1}{2}}\l_{n-1}^{-  \alpha }    ,
\end{align*}
where we have used $\|\nabla \Rl \|_1 \lesssim l^{-1 } \delta_{n} \l_{n-1}^{-  2\alpha }$ from Lemma \ref{lemma:mollidied_u_estimate}.

From the definition of $b_{i,k,n}$ it is clear that $b_{i,k,n}$ also verifies bound:
\begin{align*}
\| b_{i,k,n} \|_1 \lesssim l^{-1} \delta_{n}^{\frac{1}{2}}\l_{n-1}^{-  \alpha }   . 
\end{align*}

\end{proof}

\stepcounter{steps}
\subsection*{Step \arabic{steps}: Estimate the velocity perturbation}

We summarize the regularity properties of $w_n^p$ in the below proposition. 
\begin{proposition}[Regularity of $w_n^p$]\label{prop:w_p}
There exists $a_0>0$ sufficiently large so that for any $a \geq a_0$ the principle part of velocity increment defined by \eqref{eq:def_w_p} verifies
\begin{align}
\|w_n^p \|_2 + \frac{1}{\l_{n}} \|\nabla w_n^p \|_2  &\leq \frac{1}{8} \delta_{n}^{\frac{1}{2}}  \label{eq:w_p_L2}\\
\|w_n^p \|_1 + \frac{1}{\l_n} \|\nabla w_n^p \|_1 & \leq \frac{1}{8} \delta_{n}^\frac{1}{2}    \mu_{n}^{\frac{-d+1}{2}}  \label{eq:w_p_L1}.
\end{align}
\end{proposition}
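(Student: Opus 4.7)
The plan rests on two structural features of $w_n^p$. First, by Lemma \ref{lemma:mikado_R} together with the dyadic partition $\chi_{i,n}$, the terms $a_{i,k,n}\,\psi_{[i],k}^{\mu_n}(\sigma_n\,\cdot)$ are pairwise disjointly supported in $(i,k)$: if $[i]\neq [i']$ the disjointness comes from $\Supp W_{[i]}\cap\Supp W_{[i']}=\emptyset$; if $[i]=[i']$ but $k\neq k'$ it comes from the separated $\mu_n^{-1}$-cylinders around $l_{[i],k}$ and $l_{[i],k'}$; and if $[i]=[i']$, $k=k'$ but $i\neq i'$, necessarily $|i-i'|\geq 2$ and the annular structure of $\phi_i$ forces $\Supp\chi_{i,n}\cap\Supp\chi_{i',n}=\emptyset$. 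Hence, for every $1\leq r\leq\infty$,
\begin{equation*}
\|w_n^p\|_r^r=\sum_{i,k}\bigl\|a_{i,k,n}\,\psi_{[i],k}^{\mu_n}(\sigma_n\,\cdot)\bigr\|_r^r.
\end{equation*}
Second, the coefficients $a_{i,k,n}$ have spatial scale $l$, while $\psi_{[i],k}^{\mu_n}(\sigma_n\,\cdot)$ is $\sigma_n^{-1}\TT^d$-periodic, and the choices $\sigma_n=\l_n^{1/4}=\l_{n-1}^{5/4}$ together with $l^{-1}\leq\l_{n-1}^{1+1/40}$ enforce the scale separation $l^{-1}\leq\sigma_n^{1-\theta}$ for some fixed $\theta>0$. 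This is exactly the hypothesis of the commutator estimate, Proposition \ref{prop:inversediv_commutator2}.

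I first establish \eqref{eq:w_p_L2}. Applying \eqref{eq:holder_commutator1} with $p=2$, $\mu=l^{-1}$, $\sigma=\sigma_n$, and $C_a\lesssim\delta_n^{1/2}\l_{n-1}^{-\alpha}\,l^{-2d-2}$ from Proposition \ref{prop:coef_a_b} gives
\begin{equation*}
\bigl\|a_{i,k,n}\,\psi_{[i],k}^{\mu_n}(\sigma_n\,\cdot)\bigr\|_2\lesssim\|a_{i,k,n}\|_2\,\|\psi_{[i],k}^{\mu_n}\|_2+C_a\,\|\psi_{[i],k}^{\mu_n}\|_2\,\sigma_n^{-N}\lesssim\delta_n^{1/2}\l_{n-1}^{-\alpha},
\end{equation*}
after using $\|\psi_{[i],k}^{\mu_n}\|_2=1$ from \eqref{eq:def_psi_l_2}, the $L^2$ bound on $a_{i,k,n}$ from Proposition \ref{prop:coef_a_b}, and choosing $N$ large enough to kill the commutator error. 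Squaring, summing over the at most $i_{\max}\lesssim\ln\l_n$ admissible indices and taking square roots yields $\|w_n^p\|_2\lesssim(\ln\l_n)^{1/2}\delta_n^{1/2}\l_{n-1}^{-\alpha}$, bounded by $\frac{1}{16}\delta_n^{1/2}$ once $a$ is chosen large enough to absorb the polylogarithm into $\l_{n-1}^{-\alpha}$. The gradient is identical: Leibniz splits $\nabla w_n^p$ into two terms, and because $\sigma_n\mu_n=\l_n\gg l^{-1}$, the one with the derivative falling on $\psi_{[i],k}^{\mu_n}(\sigma_n\,\cdot)$ dominates. Applying the commutator estimate again with $\|\nabla\psi_{[i],k}^{\mu_n}\|_2\lesssim\mu_n$ from Lemma \ref{lemma:mikado_R} delivers $\|\nabla w_n^p\|_2\lesssim\l_n(\ln\l_n)^{1/2}\delta_n^{1/2}\l_{n-1}^{-\alpha}$, and dividing by $\l_n$ closes \eqref{eq:w_p_L2}.

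For \eqref{eq:w_p_L1} I deduce the $L^1$ bound from the $L^2$ bound via Cauchy--Schwarz after a support estimate. Each $\psi_{[i],k}^{\mu_n}(\sigma_n\,\cdot)$ is supported in the $\TT^d$-periodization of a $\mu_n^{-1}$-tube around the rescaled line $l_{[i],k}$, of total $\TT^d$-measure $\lesssim\mu_n^{-(d-1)}$; combined with disjointness and $i_{\max}\lesssim\ln\l_n$ this gives $|\Supp w_n^p|\lesssim(\ln\l_n)\,\mu_n^{-(d-1)}$. Hence
\begin{equation*}
\|w_n^p\|_1\leq|\Supp w_n^p|^{1/2}\,\|w_n^p\|_2\lesssim(\ln\l_n)\,\delta_n^{1/2}\l_{n-1}^{-\alpha}\,\mu_n^{-(d-1)/2},
\end{equation*}
bounded by $\frac{1}{16}\delta_n^{1/2}\mu_n^{-(d-1)/2}$ for $a$ large. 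Since $\nabla w_n^p$ is supported in the same set, the same Cauchy--Schwarz argument combined with the $L^2$ bound on $\nabla w_n^p$ closes the gradient half of \eqref{eq:w_p_L1}.

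The main obstacle, which dictates the whole setup of the paper, is the $L^2$ bound on $w_n^p$: the naive H\"older estimate $\|a_{i,k,n}\|_\infty\,\|\psi_{[i],k}^{\mu_n}\|_2$ carries the catastrophic loss $l^{-2d-2}$ from the Sobolev embedding used in Proposition \ref{prop:coef_a_b}, which no small power $\l_{n-1}^{-\alpha}$ can absorb. Only the commutator estimate, relying crucially on the sharp separation $l^{-1}\ll\sigma_n$ made possible by the double-exponential frequency growth $b=5$ and the balance $\mu_n=\l_n^{3/4}$, $\sigma_n=\l_n^{1/4}$, lets us replace $\|a_{i,k,n}\|_\infty$ by the much smaller $\|a_{i,k,n}\|_2\lesssim\delta_n^{1/2}\l_{n-1}^{-\alpha}$, which is the smallness that eventually propagates into the convergence of the scheme.
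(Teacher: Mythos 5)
Your proposal is correct and follows essentially the same route as the paper: the commutator estimate (Proposition \ref{prop:inversediv_commutator2}, first part) with $C_a\sim\delta_n^{1/2}\l_{n-1}^{-\alpha}l^{-2d-2}$, $\mu=l^{-1}$, $\sigma=\sigma_n$ to trade $\|a_{i,k,n}\|_\infty$ for $\|a_{i,k,n}\|_2$, the support-plus-Cauchy--Schwarz reduction of the $L^1$ bounds to the $L^2$ bounds, and the Leibniz split for the gradient with $\l_n=\sigma_n\mu_n$. The only cosmetic difference is that you invoke exact disjointness of supports to sum $L^r$ norms, whereas the paper simply uses the triangle inequality and absorbs the $i_{\max}\lesssim\ln\l_n$ factor into $\l_{n-1}^{-\alpha}$ by taking $a$ large; both are fine.
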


\begin{proof}
Thanks to Proposition \ref{prop:coef_a_b}, we know that $w_n^p$ consists of finitely many concentrated Mikado flows:
\begin{align*}
w_n^p= \sum_{0\leq i\leq i_{\max}} \sum_{k} a_{i,k,n}  \psi_{i,k}^{\mu_n}(\sigma_n \cdot).
\end{align*}

To show the bound for $\|w_n^p \|_2$ it suffices to show that 
\begin{align}
\|a_{i,k,n}  \psi_{i,k}^{\mu_n}(\sigma_n \cdot) \|_2  & \lesssim \delta_{n}^\frac{1}{2} \l_{n-1}^{ - \alpha}   \label{eq:w_p_a_ik_L2}
\end{align}
The reason is that the extra factor $\l_{n-1}^{ - \alpha}$ can be used to absorbed the logarithmic error causing by $i_{\max}$ and any constant factors provided that $a$ is sufficiently large.

Since $\psi_{i,k}^{\mu_n}(\sigma_n \cdot) $ is $\sigma_n^{-1}\TT^d$-periodic and by Proposition \ref{prop:coef_a_b}
$$
\| \nabla^m a_{i,k,n} \|_{L^\infty}   \lesssim  \delta_{n}^\frac{1}{2} \l_{n-1}^{-  \alpha } l^{ -m -2d-2}
$$ 
we can apply the first part of Proposition \ref{prop:inversediv_commutator2} with $C_a=  \delta_{n}^\frac{1}{2} \l_{n-1}^{-  \alpha } l^{   -2d-2}$, $\mu =l^{-1}$, $\sigma=\sigma_{n}$ to obtain that
\begin{align*}
\|a_{i,k,n}  \psi_{i,k}^{\mu_n}(\sigma_n \cdot) \|_2  & \lesssim \|a_{i,k,n}\|_2 \| \psi_{i,k}^{\mu_n}(\sigma_n \cdot) \|_2 +C_a\sigma_{n}^{-100d}  .
\end{align*}
The second term appeared on the right is essentially a small error term. Indeed since $C_a \sigma_{n}^{-100d} \ll \l_n^{-10d}$ due to the fact that $l^{-1} \leq \sigma_{n}^2$ we obtain
\begin{align*}
\|a_{i,k,n}  \psi_{i,k}^{\mu_n}(\sigma_n \cdot) \|_2  &\lesssim \delta_{n}^\frac{1}{2} \l_{n-1}^{ - \alpha}  .
\end{align*}
And hence by taking $a$ sufficiently large, \eqref{eq:w_p_L2} can be obtained:
\begin{align*}
\| w_n^p\|_2  &\leq   \frac{1}{16}  \delta_n^\frac{1}{2}  
\end{align*}
To show the bound for $\|w_n^p \|_1$ we simply observe that 
\begin{align*}
\big| \Supp a_{i,k,n}  \psi_{i,k}^{\mu_n}(\sigma_n \cdot)  \big| \leq \big| \Supp    \psi_{i,k}^{\mu_n}(\sigma_n \cdot)  \big|\lesssim \mu_n^{-(d-1)}
\end{align*}
and thus by Jensen's inequality
\begin{align*}
\|a_{i,k,n}  \psi_{i,k}^{\mu_n}(\sigma_n \cdot) \|_1 \lesssim \|a_{i,k,n}  \psi_{i,k}^{\mu_n}(\sigma_n \cdot) \|_2 \mu_n^{-\frac{d-1}{2} }
\end{align*}
Again by taking $a$ sufficiently large, \eqref{eq:w_p_L1} can be obtained:
\begin{align*}
\| w_n^p\|_1  &\leq  \frac{1}{16}  \delta_n^\frac{1}{2} \mu^{\frac{-d+1}{2} }.
\end{align*}

Now we turn to estimate $\|\nabla w_n^p \|_p$ for $p=1$ or $2$. By the same argument of using small support set and Jensen's inequality, it suffices to only show the bound for $\|\nabla w_n^p \|_2$. Taking derivative on $w_n^p$ we have
\begin{align*}
\nabla w_n^p = \sum_{i,k} \nabla a_{i,k,n} \psi_{i,k}^{\mu_n}(\sigma_n \cdot )   + \sigma_n \sum_{i,k} a_{i,k,n}   \nabla \psi_{i,k}^{\mu_n}(\sigma_n \cdot ) .
\end{align*}
Following the same argument we apply Proposition \ref{prop:inversediv_commutator2} to the two above summands with $C_a=  \delta_{n}^\frac{1}{2} \l_{n-1}^{-  \alpha } l^{   -2d-2}$, $\mu =l^{-1}$, $\sigma=\sigma_{n}$ and then obtain that
\begin{equation*}
\|\nabla a_{i,k,n}  \psi_{i,k}^{\mu_n}(\sigma_n \cdot) \|_2    \lesssim \delta_{n}^\frac{1}{2} l^{-1} \l_{n-1}^{ -  \alpha}   
\end{equation*}
and 
\begin{align*}
\| a_{i,k,n} \nabla  \psi_{i,k}^{\mu_n}(\sigma_n \cdot) \|_2  & \lesssim \delta_{n}^\frac{1}{2} \mu_n \l_{n-1}^{ -  \alpha}  .
\end{align*}
Hence by the relationship $\l_n =\sigma_n \mu_n $ it is obtained that
\begin{align*}
\frac{1}{\l_n}\|\nabla w_n^p\|_2  & \lesssim   \delta_n^\frac{1}{2}  \l_{n-1}^{-  \alpha } 
\end{align*}
By choosing $a$ sufficiently large it holds that
\begin{align*}
\frac{1}{\l_n}\|\nabla w_n^p\|_2  & \leq   \frac{1}{16}\delta_n^\frac{1}{2}    .
\end{align*}
\end{proof}
Next, we turn to estimate the correction part of the velocity $w_n^c$. As expected $w_n^c$ is much smaller than $w^p_n$.

\begin{proposition}[Regularity of $w_n^c$]\label{prop:w_c}
There exists $a_0>0$ sufficiently large so that for any $a \geq a_0$ the correction part of velocity increment defined by \eqref{eq:def_w_p} verifies
\begin{align}
\|w^c_n \|_2 +\frac{1}{\l_n} \|\nabla w^c_n \|_2 & \leq \frac{1}{8}\delta_{n}^\frac{1}{2}    l^{-1}  \sigma_{n}^{-1}      \label{eq:w_c_L2}\\
\|w^c_n \|_1 +\frac{1}{\l_n} \|\nabla w^c_n \|_1 & \leq \frac{1}{8} \delta_{n}^\frac{1}{2}     \mu_{n}^{\frac{-d+1}{2}}  l^{-1}  \sigma_{n}^{-1}     \label{eq:w_c_L1}.
\end{align}
\end{proposition}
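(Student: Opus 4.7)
The proof closely mirrors that of Proposition \ref{prop:w_p}, with the new ingredient being the $-1$-homogeneous inverse-divergence-type operator $|\nabla|^{-1}\mathcal{R}_j$ sitting in front of the perturbation. The plan is to invoke the fast-oscillation commutator \eqref{eq:inversediv_commutator2} of Proposition \ref{prop:inversediv_commutator2} (applicable to $|\nabla|^{-1}\mathcal{R}_j$ by the remark following it) to produce the extra $\sigma_n^{-1}$ factor appearing in the target estimates, and then to decouple the slowly varying coefficient $b_{i,k,n}$ from the rapidly oscillating profile $\psi_{i,k}^{\mu_n}(\sigma_n \cdot)$ via the pointwise commutator \eqref{eq:holder_commutator1}. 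Exactly as in Proposition \ref{prop:w_p}, only $O(\ln \l_n)$ of the summands indexed by $i$ are nonzero by Proposition \ref{prop:coef_a_b}, so the logarithmic loss can be absorbed into $\l_{n-1}^{-\alpha}$ for $a$ sufficiently large.

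For the $L^2$ bound, note that the rescaled profile $\psi_{i,k}^{\mu_n}(\sigma_n\cdot)$ is $\sigma_n^{-1}\TT^d$-periodic with zero mean thanks to \eqref{eq:def_psi_osc}, and that Proposition \ref{prop:coef_a_b} provides the derivative bounds $\|\nabla^m b_{i,k,n}\|_\infty \leq C_a (l^{-1})^m$ with $C_a = \delta_n^{1/2}\l_{n-1}^{-\alpha}l^{-2d-3}$. The parameter setup from Step 1 ensures the hypothesis $l^{-1}\leq \sigma_n^{1-\theta}$ for an admissible small $\theta>0$. Applying first \eqref{eq:inversediv_commutator2} with $s=0$ and then \eqref{eq:holder_commutator1} with $p=2$ to a single summand yields
\begin{equation*}
\|w_n^c\|_2 \lesssim \sigma_n^{-1}\bigl(\|b_{i,k,n}\|_2\,\|\psi_{i,k}^{\mu_n}\|_2 + C_a\|\psi\|_2\sigma_n^{-N}\bigr) + C_a\|\psi\|_2\sigma_n^{-N},
\end{equation*}
where the error terms are negligible for $N$ large (since $C_a\sigma_n^{-N}\ll \l_n^{-10d}$, just as in the proof of Proposition \ref{prop:w_p}). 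Using the normalization $\|\psi_{i,k}^{\mu_n}\|_2 = 1$ from \eqref{eq:def_psi_l_2}, what remains is the bound $\|b_{i,k,n}\|_2 \lesssim l^{-1}\delta_n^{1/2}\l_{n-1}^{-\alpha}$. Although absent from the explicit statement of Proposition \ref{prop:coef_a_b}, it follows from the same technique used there for $\|\nabla a_{i,k,n}\|_1$: applying the chain rule to $b_{i,k,n}=\D a_{i,k,n}$, exploiting $|\Supp\chi_{i,n}|\lesssim 4^{-i}$ from Chebyshev, and combining the derivative bound $\|\nabla\Rl\|_\infty$ from Lemma \ref{lemma:mollidied_u_estimate} with Cauchy--Schwarz on the support set.

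The $L^1$ estimate follows the same scheme with a small modification: applying \eqref{eq:inversediv_commutator2} with $p=1+\epsilon$ for some small $\epsilon>0$, then using H\"older's inequality on the support of $\psi_{i,k}^{\mu_n}(\sigma_n\cdot)$ (of measure $\lesssim \mu_n^{-(d-1)}$) to produce the factor $\mu_n^{-(d-1)/2+O(\epsilon)}$, and finally invoking the trivial inclusion $L^{1+\epsilon}\hookrightarrow L^1$, with the $O(\epsilon)$ loss absorbed by $\l_{n-1}^{-\alpha}$ for $a$ large. For the gradient estimates, $\nabla|\nabla|^{-1}\mathcal{R}_j$ is a $0$-order Fourier multiplier bounded on $L^p$ for $1<p<\infty$, so $\|\nabla w_n^c\|_p \lesssim \|\sum_{i}b_{i,k,n}\psi_{i,k}^{\mu_n}(\sigma_n\cdot)\|_p$; since $\l_n^{-1}=\sigma_n^{-1}\mu_n^{-1}\leq\sigma_n^{-1}$, dividing by $\l_n$ yields a bound at least as strong as the one on $\|w_n^c\|_p$ and the claim follows a fortiori. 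The main technical hurdle is establishing the missing $L^2$ bound on $b_{i,k,n}$, which is routine but requires careful tracking of the $l$ and $\l_{n-1}^{-\alpha}$ scalings.
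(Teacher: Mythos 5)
Your architecture coincides with the paper's: reduce $\nabla w_n^c$ to $\sum_{i} b_{i,k,n}\,\psi_{i,k}^{\mu_n}(\sigma_n\cdot)$ by the $L^p$-boundedness of the Riesz transform, extract the factor $\sigma_n^{-1}$ for $w_n^c$ itself from the second part of Proposition \ref{prop:inversediv_commutator2}, decouple $b_{i,k,n}$ from the oscillating profile with the first part, and pass from $L^2$ to $L^1$ through an $L^{1+\epsilon}$ norm (the paper's $L^s$ with $s>1$ close to $1$) together with the small support of $\psi_{i,k}^{\mu_n}(\sigma_n\cdot)$. Your observation that $\l_n^{-1}\leq \sigma_n^{-1}$ makes the gradient bounds follow from the same estimate on $\sum_i b_{i,k,n}\psi_{i,k}^{\mu_n}(\sigma_n\cdot)$ is also how the paper closes, and the absorption of the $O(\ln\l_n)$ summands and of the $\sigma_n^{-N}$ errors matches.

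The gap sits exactly where you place the ``main technical hurdle'': the bound $\|b_{i,k,n}\|_2\lesssim l^{-1}\delta_n^{1/2}\l_{n-1}^{-\alpha}$, which is what \eqref{eq:holder_commutator1} with $p=2$ needs in order to yield $\|b_{i,k,n}\psi_{i,k}^{\mu_n}(\sigma_n\cdot)\|_2\lesssim \delta_n^{1/2}\l_{n-1}^{-\alpha}l^{-1}$. The derivation you sketch does not produce it. Pointwise one has $|b_{i,k,n}|\lesssim 2^{-i}\delta_n^{-1/2}\l_{n-1}^{\alpha}|\nabla\Rl|$ on $\Supp\chi_{i,n}$, but Lemma \ref{lemma:mollidied_u_estimate} only controls $\|\nabla^m\Rl\|_1$; the sup bound available through Sobolev embedding is $\|\nabla\Rl\|_\infty\lesssim l^{-d-2}\delta_n\l_{n-1}^{-2\alpha}$, so Cauchy--Schwarz with $|\Supp\chi_{i,n}|\lesssim 4^{-i}$ gives only $\|b_{i,k,n}\|_2\lesssim \delta_n^{1/2}\l_{n-1}^{-\alpha}l^{-d-2}$, and even the sharper interpolation $\|\nabla\Rl\|_2\leq\|\nabla\Rl\|_1^{1/2}\|\nabla\Rl\|_\infty^{1/2}$ still loses a factor $l^{-(d+1)/2}$ relative to $l^{-1}$. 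Since the right-hand sides of \eqref{eq:w_c_L2}--\eqref{eq:w_c_L1} carry only $l^{-1}\sigma_n^{-1}$ and $l^{-1}\geq\l_{n-1}$, a loss of any positive power of $l^{-1}$ cannot be absorbed by taking $a$ large, so the estimates as stated do not follow from your argument. To be fair, the paper's own proof has the identical lacuna: it jumps from the $L^\infty$ derivative bounds of Proposition \ref{prop:coef_a_b} to \eqref{eq:w_c_b_psi_L2} ``after simplifying'', which presupposes the same unproved $L^2$ bound on $b_{i,k,n}$ (and the analogous one on $\nabla a_{i,k,n}$ in Proposition \ref{prop:w_p}). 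You have correctly isolated the missing ingredient, but closing it requires either a genuine $L^2$ estimate $\|\nabla\Rl\|_2\lesssim l^{-1}\delta_n\l_{n-1}^{-2\alpha}$, which Lemma \ref{lemma:mollidied_u_estimate} does not supply, or a weakening of the constants in the proposition; neither is provided in your proposal.
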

\begin{proof}
Observe that the definition of $w^c_n$ involves Riesz transform which is only bounded $L^p \to L^p$ when $1<p<\infty$. So to resolve this issue let us fix a parameter $s>1$ sufficiently close to $1$ such that
\begin{equation}\label{eq:s_close_to_1}
\mu_n^{\frac{d-1}{1}  -\frac{d-1}{s}} \leq \l_{n-1}^{\frac{1}{2}\alpha}.
\end{equation}
And we instead estimate the $L^s$ norm rather than $L^1$ norm.
Let us first prove the bounds for $\nabla w^c_n $ as in this case we can follow along the lines of Proposition \ref{prop:w_p}. By the $L^p \to L^p$ boundedness of the Riesz transform for any $1 < p <\infty$ we notice that 
\begin{align}\label{eq:w_c_b_psi_Lp}
\| \nabla w^c_n  \|_{L^p(\TT^d)} \lesssim  \bigg\| \sum_{i \leq i_{\max}} b_{i,k,n}  \psi_{i,k}^{\mu_n}(\sigma_n \cdot) \bigg\|_{L^p(\TT^d) } .
\end{align}
Since by Proposition \ref{prop:coef_a_b} 
$$
\| \nabla^m b_{i,k,n} \|_{L^\infty}   \lesssim \delta_{n}^\frac{1}{2} \l_{n-1}^{-  \alpha } l^{ -m -2d-3} 
$$
so applying the first part of Proposition \ref{prop:inversediv_commutator2} with $C_a=  \delta_{n}^\frac{1}{2} \l_{n-1}^{-  \alpha } l^{   -2d-3}$, $\mu =l^{-1}$, $\sigma=\sigma_{n}$ and after simplifying one obtains 
\begin{align}\label{eq:w_c_b_psi_L2}
\big\|   b_{i,k,n}  \psi_{i,k}^{\mu_n}(\sigma_n \cdot) \big\|_{2}  & \lesssim \delta_{n}^\frac{1}{2} \l_{n-1}^{-  \alpha }     l^{-1}  
\end{align}
and then by small support of $\psi_{i,k}^{\mu_n}(\sigma_n x) $, namely
$$
\Big| \Supp b_{i,k,n}  \psi_{i,k}^{\mu_n}(\sigma_n x)  \Big| \leq \Big| \Supp  \psi_{i,k}^{\mu_n}(\sigma_n x)  \Big| \lesssim \mu_n^{-(d-1)}
$$
and Jensen's inequality we have
\begin{align}\label{eq:w_c_b_psi_Ls}
\big\|   b_{i,k,n}  \psi_{i,k}^{\mu_n}(\sigma_n \cdot) \big\|_{s }  & \lesssim  \mu_n^{\frac{d-1}{1}  -\frac{d-1}{s}} \big\|   b_{i,k,n}  \psi_{i,k}^{\mu_n}(\sigma_n \cdot) \big\|_{2 }  \nonumber\\
& \lesssim\delta_{n}^\frac{1}{2} \l_{n-1}^{-  \alpha }     l^{-1} \mu_n^{\frac{-d+1}{2}} \l_{n-1}^{-\frac{1}{2}\alpha}
\end{align}
where we have used the fact that $\mu_n^{\frac{d-1}{1} -\frac{d-1}{s} }\leq \l_{n-1}^{\frac{1}{2}\alpha} $.

Since $\sigma_{n} =\l_n^{\frac{1}{4}} \leq \l_n$ from \eqref{eq:w_c_b_psi_L2} and \eqref{eq:w_c_b_psi_Lp} we get
\begin{align*}
\frac{1}{\l_n}\|\nabla w^c_n \|_2 & \leq \frac{1}{16}\delta_{n}^\frac{1}{2}    l^{-1}  \sigma_{n}^{-1}       
\end{align*}
as long as $a$ is sufficiently large. 

To recover the $L^1$ bound for $\nabla w^c_n $ we simply first bound $L^1$ norm by its $L^s$ norm:
$$
\|\nabla w^c_n \|_1\leq \|\nabla w^c_n \|_s 
$$
and it follows that
$$
\|\nabla w^c_n \|_1 \lesssim \sum_{i} \big \|  b_{i,k,n}  \psi_{i,k}^{\mu_n}(\sigma_n \cdot) \big \|_{s} .
$$
Then taking $a$ sufficient large and using \eqref{eq:w_c_b_psi_Ls} we can ensure that
\begin{align*}
\|\nabla w^c_n \|_1   \leq  \frac{1}{16}\delta_{n}^\frac{1}{2} \mu_n^{\frac{-d+1}{2}}     l^{-1} 
\end{align*}
which implies that
\begin{align*}
\frac{1}{\l_n}\|\nabla w^c_n \|_1   \leq  \frac{1}{16}\delta_{n}^\frac{1}{2} \mu_n^{\frac{-d+1}{2}}   l^{-1}  \l_{n}^{-1}   .
\end{align*}

It remains to prove the estimates of $\|  w^c_n  \|_p $ for $p=1,2$. It follows from the $L^p$ boundedness of Riesz transform that
\begin{align*}
 \| w^c_n\|_{2 } & \lesssim \sum_{i,k} \big\| |\nabla|^{-1}   \big[b_{i,k,n}  \psi_{i,k}^{\mu_n}(\sigma_n \cdot)  \big] \big\|_{2 } \\
\| w^c_n \|_{s}  &\lesssim \sum_{i,k} \big\|  |\nabla|^{-1}    \big[b_{i,k,n}  \psi_{i,k}^{\mu_n}(\sigma_n \cdot) \big] \big\|_{s}.
\end{align*}
Therefore it suffices to derive suitable estimates for the functions 
\begin{align*}
 |\nabla|^{-1}    \big[b_{i,k,n}  \psi_{i,k}^{\mu_n}(\sigma_n \cdot)  \big] 
\end{align*}
where we note that $b_{i,k,n}  \psi_{i,k}^{\mu_n}(\sigma_n \cdot) $ has zero-mean since
\begin{align*}
\int_{\TT^d }  b_{i,k,n}  \psi_{i,k}^{\mu_n}(\sigma_n x)  ~ dx = \int_{\TT^d }  \D a_{i,k,n}  \psi_{i,k}^{\mu_n}(\sigma_n x)   ~ dx =0.
\end{align*}
So, thanks to Proposition \ref{prop:coef_a_b} the assumptions in Proposition \ref{prop:inversediv_commutator2} are fulfilled:
\begin{align*}
\| \nabla^m b_{i,k,n} \|_{L^\infty} & \lesssim \delta_{n}^\frac{1}{2} \l_{n-1}^{-  \alpha } l^{ -m -2d-3} 
\end{align*}
and then we can obtain from the second part of Proposition \ref{prop:inversediv_commutator2}
\begin{align}\label{eq:inverse_nabla_Pb_psi_L2}
\big\| |\nabla|^{-1}   \big[b_{i,k,n}  \psi_{i,k}^{\mu_n}(\sigma_n \cdot)  \big] \big\|_{2 } 
&\lesssim \sigma_{n}^{-1} \big\|        b_{i,k,n}  \psi_{i,k}^{\mu_n}(\sigma_n \cdot)    \big\|_{2 }  + \sigma_{n}^{-10d}
\end{align}
and
\begin{align}\label{eq:inverse_nabla_Pb_psi_Ls}
\big\|  |\nabla|^{-1}    \big[b_{i,k,n}  \psi_{i,k}^{\mu_n}(\sigma_n \cdot)  \big] \big\|_{s} 
& \lesssim \sigma_{n}^{-1} \big\|       b_{i,k,n}  \psi_{i,k}^{\mu_n}(\sigma_n \cdot)    \big\|_{s }  + \sigma_{n}^{-10d}.
\end{align}
where we have used the fact that here $\delta_{n}^\frac{1}{2} \l_{n-1}^{-  \alpha } l^{   -2d-3} \sigma_{n}^{-100d} \ll \sigma_{n}^{-10d}$.
Then it follows again from the first part of Proposition \ref{prop:inversediv_commutator2} that
\begin{align}\label{eq:Pb_psi_L2L2}
\big\|   b_{i,k,n}  \psi_{i,k}^{\mu_n}(\sigma_n \cdot)    \big\|_{2 } \lesssim  \delta_{n}^\frac{1}{2} \l_{n-1}^{-  \alpha }    \sigma_{n}^{-1} l^{-1} +\sigma_{n}^{-10d},
\end{align}
which by Jensen's inequality and the small support of $ \psi_{i,k}^{\mu_n}(\sigma_n x) $ also implies that
\begin{align}\label{eq:Pb_psi_LsbyL2}
\big\|   b_{i,k,n}  \psi_{i,k}^{\mu_n}(\sigma_n \cdot)    \big\|_{s } \lesssim  \mu_n^{ \frac{d-1}{s} -\frac{d-1}{2}} \delta_{n}^\frac{1}{2} \l_{n-1}^{-  \alpha }    \sigma_{n}^{-1} l^{-1} +\sigma_{n}^{-10d},
\end{align}
So putting together \eqref{eq:s_close_to_1}, \eqref{eq:inverse_nabla_Pb_psi_L2}, \eqref{eq:inverse_nabla_Pb_psi_Ls}, and \eqref{eq:Pb_psi_L2L2} we have
\begin{align*}
 \| w^c_n\|_{2 } & \lesssim \sum_{i,k} \delta_{n}^\frac{1}{2} \l_{n-1}^{-  \alpha }    \sigma_{n}^{-1} l^{-1} \\
\| w^c_n \|_{s}  &\lesssim \sum_{i,k} \delta_{n}^\frac{1}{2} \l_{n-1}^{- \frac{1}{2} \alpha }   \mu_{n}^{\frac{-d+1}{2}} \sigma_{n}^{-1}  l^{-1}  .
\end{align*}
Finally using the extra factors $\l_{n-1}^{-  \alpha }$  and $\l_{n-1}^{- \frac{1}{2} \alpha }$ to absorb any logarithmic and constant factors we can get rid of the summation in $i,k$ to obtain that
\begin{align*}
 \| w^c_n\|_{2 } & \leq \frac{1}{16} \delta_{n}^\frac{1}{2}       l^{-1}  \sigma_{n}^{-1}   
\end{align*}
and that
\begin{align*}
 \| w^c_n \|_{1}  \leq \| w^c_n \|_{s} \leq \frac{1}{16} \delta_{n}^\frac{1}{2}     \mu_{n}^{\frac{-d+1}{2}} l^{-1}  \sigma_{n}^{-1}  .  
\end{align*}
\end{proof}

\begin{remark}
It seems that one should be able to gain a factor of $l^{-1} \l_n^{-1}$ rather than $l^{-1} \sigma_n^{-1}$ since $\psi_{i,k}^{\mu_n}(\sigma_n \cdot)$ has frequency $\l_n$. Such improvement can be obtained by carefully choosing the profile function $\psi$ in Section \ref{Section:mikado} with vanishing moments up to a sufficiently high order, which may be useful in the future study of constructing solutions with better regularity.
\end{remark}

\stepcounter{steps}
\subsection*{Step \arabic{steps}: Check \texorpdfstring{\eqref{eq:main_iteration_w_n}}{2.2} and hypothesis \texorpdfstring{\eqref{eq:hypothesis2}}{H2} and \texorpdfstring{\eqref{eq:hypothesis3}}{H3}}

Let us first check \eqref{eq:main_iteration_w_n}. 
Since by the definition of $u_n$, namely \eqref{eq:def_u_n_w_n} we have
\begin{align*}
\|u_{n } -u_{n-1} \|_2 &\leq \|w_{n } \|_2+ \|\ul -u_{n-1} \|_2
\end{align*}
it suffices to estimate
\begin{align*}
\|u_{n } -u_{n-1} \|_2 &\leq \|w_{n }^p \|_2+\|w_{n }^c \|_2+ \|\ul -u_{n-1} \|_2.
\end{align*}
From the estimates \eqref{eq:w_p_L2} and \eqref{eq:w_c_L2} and Lemma \ref{lemma:mollidied_u_estimate} we know that
\begin{align*}
\|w_{n }^p \|_2+\|w_{n }^c \|_2 &\leq   \frac{1}{4} \delta_{n}^{\frac{1}{2}  }  \\
\|\ul -u_{n-1} \|_2 & \leq  C \delta_{n}^\frac{1}{2} \l_{n-1}^{-  \alpha } .
\end{align*}
Taking a  sufficiently large $a$, we can arrange that
\begin{align*}
\|u_{n } -u_{n-1} \|_2 &\leq  \frac{1}{4} \delta_{n}^{\frac{1}{2}  } .
\end{align*}
and therefore
\begin{align}\label{eq:u_n_u_n-1}
\|u_{n } -u_{n-1} \|_2 &\leq \frac{1}{2} \delta_{n}^{\frac{1}{2}  }.
\end{align}

We will bound the term $\nabla(u_{n } -u_{n-1}) $ in almost the same way. As before we first obtain from the definitions of $u_n$ and $w_n$ that
\begin{align*}
\|\nabla u_{n } -\nabla u_{n-1} \|_2 & \leq \|\nabla w_{n } \|_2+ \|\nabla \ul\|_2+\| \nabla u_{n-1} \|_2\\
&\leq \| \nabla w_{n }^p \|_2+\|\nabla w_{n }^c \|_2+ \|\nabla \ul\|_2 + \| \nabla u_{n-1} \|_2
\end{align*}

Thanks to estimates \eqref{eq:w_p_L2}, \eqref{eq:w_c_L2} and Lemma \ref{lemma:mollidied_u_estimate}, we obtain
\begin{align*}
\|\nabla u_{n } -\nabla u_{n-1} \|_2 
&\leq \| \nabla w_{n }^p \|_2+\|\nabla w_{n }^c \|_2+ \|\nabla \ul\|_2 + \| \nabla u_{n-1} \|_2\\
& \leq  \frac{1}{4} \l_n \delta_{n}^\frac{1}{2}  +C \delta_{n-1 }^{\frac{1}{2}} \l_{n-1}    
\end{align*}
where $C$ is a constant depending only on the mollifier $\eta$.
Again by choose $a$ sufficiently large we can guarantee that
\begin{align*}
\|\nabla u_{n } -\nabla u_{n-1} \|_2 \leq  \frac{1}{2} \l_n \delta_{n}^\frac{1}{2},
\end{align*}
which together with \eqref{eq:u_n_u_n-1} means \eqref{eq:main_iteration_w_n} is satisfied.

Now we show \eqref{eq:hypothesis2}. First, we obtain the obvious bound
\begin{align*}
\|u_{n} \|_2 & = \|\ul +w_n \|_2 \leq \|u_{n-1}\|_2 + \|\ul -u_{n-1}\|_2 + \| w_n \|_2 .
\end{align*}
Then, from Proposition \ref{prop:w_p} and \ref{prop:w_c}, and Lemma \ref{lemma:mollidied_u_estimate} we see that
\begin{align*}
\|u_{n} \|_2 & \leq \|u_{n-1}\|_2 + \|\ul -u_{n-1}\|_2 + \| w_n \|_2 \\
& \leq 1-\delta_{n-1}^\frac{1}{2} + C\delta_{n }^{\frac{1}{2}} \l_{n-1}^{- \alpha}+ \frac{1}{2}  \delta_{n}^\frac{1}{2}  .
\end{align*}
where again $C$ is some constant depending only on the mollifier $\eta$.
Now choosing $a$ sufficiently large depending on $b$ so that
\begin{align*}
&3 \delta_{n}^\frac{1}{2} \leq \delta_{n-1}^\frac{1}{2}  \\
&C  \l_{n-1}^{- \alpha} \leq 1
\end{align*}
we are able to find
\begin{align*}
1-\delta_{n-1}^\frac{1}{2} + C\delta_{n }^{\frac{1}{2}} \l_{n-1}^{- \alpha}+ \frac{1}{2}  \delta_{n}^\frac{1}{2}  & \leq 1-2 \delta_{n}^\frac{1}{2} +  \delta_{n }^{\frac{1}{2}}  + \frac{1}{2}  \delta_{n}^\frac{1}{2}\\
&\leq 1-\delta_{n}^\frac{1}{2}
\end{align*}
and hence we obtain the desire bound \eqref{eq:hypothesis2}:
\begin{align*}
\|u_{n} \|_2 & \leq 1- \delta_{n}^\frac{1}{2}  .
\end{align*}

As for \eqref{eq:hypothesis3}, the proof is very similar. We first obtain
\begin{align*}
\|\nabla u_{n} \|_2 & \leq  \|\nabla \ul \|_2 + \| \nabla w_n \|_2
\end{align*}
and then using Lemma \ref{lemma:mollidied_u_estimate}, estimates \eqref{eq:w_p_L2} and \eqref{eq:w_c_L2} we find that
\begin{align*}
\|\nabla u_{n} \|_2 &\leq \|\nabla \ul \|_2 +\| \nabla w_n^p \|_2+ \| \nabla w_n^c \|_2\\
& \leq C\delta_{n-1 }^{\frac{1}{2}} \l_{n-1}+ \frac{1}{2} \delta_{n}^\frac{1}{2}          \l_n 
\end{align*}
where the constant $C$ depends only on the mollifier $\eta$.
Letting $a$ sufficiently large it can be arranged that
\begin{align*}
C  \l_{n-1}^{1-\beta}\leq  \frac{1}{2}          \l_n^{1-\beta} .
\end{align*}
And then we have
\begin{align*}
C\delta_{n-1 }^{\frac{1}{2}} \l_{n-1}+ \frac{1}{2} \delta_{n}^\frac{1}{2}          \l_n  & \leq  \delta_{n}^{\frac{1}{2}} \l_{n},
\end{align*}
which implies
\begin{align*}
\|\nabla u_{n} \|_2 & \leq \delta_{n}^\frac{1}{2}          \l_n .
\end{align*}
So \eqref{eq:hypothesis3} is also fulfilled.

\stepcounter{steps}
\subsection*{Step \arabic{steps}: Estimate the new Reynolds stress}

Thanks to \eqref{eq:NSRE_mollified}, the new Reynolds stress is defined by the divergence equation:
\begin{equation*}
\begin{aligned}\label{eq:div_form_newR}
\D R_{n} +\nabla P_n  = &  \D \overline{R}_l +  \Delta  w_n +    \D w_n^p \otimes  w_n^p \\
&+ \D ( w_n  \otimes \ul +  \ul \otimes w_n    )\\
&  \quad          + \D ( w_n^c \otimes  w_n^p +  w_n^p \otimes  w_n^c +   w_n^c \otimes  w_n^c) .
\end{aligned}
\end{equation*}

To estimate the $L^1$ norm of $R_{n} $, one needs to somehow invert the divergence. For this purpose we follow the construction given in \cite{1701.08678}. The operator $\mathcal{R}: C^\infty(\TT^d, \RR^d) \to \RR^{d\times d}$ is defined as 
\begin{equation}\label{eq:def_operatorR}
\begin{aligned}
&(  \mathcal{R} f)_{ij} =  \mathcal{R}_{ijk}f_k\\
& \mathcal{R}_{ijk} =  \frac{2-d}{  d-1} \Delta^{-2} \partial_i \partial_j \partial_k+ \frac{-1}{d-1} \Delta^{-1} \partial_k \delta_{ij}  +\Delta^{-1} \partial_i \delta_{jk}  + \Delta^{-1} \partial_j \delta_{ik}.
\end{aligned}
\end{equation}
It is clear that for any $f\in C^\infty(\TT^d)$ the matrix $(  \mathcal{R} f)_{ij}$ is symmetric. Taking the trace we have
\begin{align*}
\Tr    \mathcal{R} f&=  \frac{2-d}{  d -1} \Delta^{-1}    \partial_k f_k + \frac{-d}{d-1} \Delta^{-1} \partial_k   f_k  +   \Delta^{-1}   \partial_k f_k + \Delta^{-1}   \partial_k f_k\\
&=(\frac{2-d}{  d  -1} + \frac{-d}{d-1}+  2) \Delta^{-1}  \partial_k f_k=0
\end{align*}
which means that $\mathcal{R} f$ is also trace-less.

And lastly, we have 
$$
\D    \mathcal{R} f= \partial_j (  \mathcal{R} f)_{ij} =\partial_j \mathcal{R}_{ijk}f_k
$$ 
so by direct computation we can check that
\begin{align*}
\D    \mathcal{R} f=\frac{2-d}{  d -1} \Delta^{-1} \partial_i   \partial_k f_k + \frac{-1}{d-1} \Delta^{-1} \partial_k \partial_i f_k  + \Delta^{-1} \partial_i \partial_k f_k  +   f_i =f_i=f.
\end{align*}

\begin{lemma}\label{lemma:inverse_div}
The operator $\mathcal{R}$ defined by \eqref{eq:def_operatorR} has the following properties. For any $f\in C^\infty_0(\TT^d)$ the matrix $\mathcal{R} f$ is symmetric trace-free and we have
\begin{equation}\label{eq:inverse_div}
\D \mathcal{R} f =f.
\end{equation}
If additionally $\D  f =0 $ then
\begin{equation}\label{eq:inverse_div_for_Delta_u}
  \mathcal{R} \Delta f = \nabla f + (\nabla f)^{T} .
\end{equation}
\end{lemma}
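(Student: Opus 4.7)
The plan is to verify each claim by direct symbolic computation using the explicit formula for $\mathcal{R}_{ijk}$, with the observation that on $C^\infty_0(\TT^d)$ the inverse Laplacian $\Delta^{-1}$ is well defined as convolution with the zero-mean Green's function, so all operators appearing in the definition commute and can be manipulated freely.

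First I would verify symmetry of $\mathcal{R} f$ by inspecting the four terms of the definition: the first term $\frac{2-d}{d-1}\Delta^{-2}\partial_i\partial_j\partial_k$ is symmetric in $(i,j)$ because partial derivatives commute, the second term $-\frac{1}{d-1}\Delta^{-1}\partial_k \delta_{ij}$ is symmetric in $(i,j)$ by the Kronecker delta, and the last two terms $\Delta^{-1}\partial_i \delta_{jk}+\Delta^{-1}\partial_j \delta_{ik}$ exchange under $i\leftrightarrow j$. The trace-free property and the identity $\D \mathcal{R} f=f$ are exactly the two computations already carried out in the paragraphs preceding the lemma, relying on the algebraic identity $\frac{2-d}{d-1}+\frac{-d}{d-1}+2=0$ and on the fact that applying $\partial_j$ to the three terms of $\mathcal{R}_{ijk}$ with a loose index $i$ produces three contributions that telescope to $f_i$.

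For the key new identity $\mathcal{R}\Delta f=\nabla f+(\nabla f)^T$ under the assumption $\D f=0$, I would simply apply $\Delta$ inside the definition of $\mathcal{R}$, cancelling one power of $\Delta^{-1}$ in each term (and reducing $\Delta^{-2}$ to $\Delta^{-1}$ in the first), to obtain
\begin{equation*}
(\mathcal{R}\Delta f)_{ij}=\frac{2-d}{d-1}\Delta^{-1}\partial_i\partial_j\partial_k f_k-\frac{1}{d-1}\partial_k f_k\,\delta_{ij}+\partial_i f_j+\partial_j f_i.
\end{equation*}
Under the divergence-free hypothesis $\partial_k f_k=0$, the first two terms vanish identically, leaving $(\mathcal{R}\Delta f)_{ij}=\partial_i f_j+\partial_j f_i=(\nabla f+(\nabla f)^T)_{ij}$, which is the desired identity.

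The proof involves no estimates and no obstacles: every step is a symbol manipulation on Fourier multipliers acting on smooth zero-mean periodic functions. The only thing to keep track of is that the combinations of constants arising from index contractions work out; this is precisely the reason the coefficients in \eqref{eq:def_operatorR} were chosen the way they were.
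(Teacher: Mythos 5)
Your proposal is correct and follows essentially the same route as the paper, which verifies the trace-free property and $\D\mathcal{R}f=f$ by the same direct multiplier computations in the paragraphs preceding the lemma. Your explicit check of \eqref{eq:inverse_div_for_Delta_u} — applying $\Delta$ termwise and using $\partial_k f_k=0$ to kill the first two contributions, leaving $\partial_i f_j+\partial_j f_i$ — is exactly the computation the paper leaves to the reader, and it is carried out correctly.
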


Thanks to Lemma \ref{lemma:inverse_div} we need to estimate the following new Reynolds stress defined by using the inverse divergence operator $\mathcal{R}$.
\begin{equation}
\begin{aligned}\label{eq:newR}
R_{n}  = & \underbrace{\mathcal{R}(\D w_n  \otimes u_{n-1}+u_{n-1} \otimes w_n  )}_{\text{quadratic error}}\\
 &+ \underbrace{\mathcal{R}(\Delta  w_n  ) }_{\text{linear error}}   + \underbrace{\mathcal{R}(\D w_n^p \otimes  w_n^p + \overline{R}_l)}_{\text{oscillation error}} + \underbrace{\mathcal{R}\D ( w_n^c \otimes  w_n^p +  w_n^p \otimes  w_n^c +   w_n^c \otimes  w_n^c)}_{\text{correction error}}
\end{aligned} 
\end{equation}
which is well-defined since all terms involved have zero-mean, and we simply denote the equation as
\begin{equation}\label{eq:newR_short}
R_{n} = E_q +E_l +E_o +E_c .
\end{equation}
It suffices to check that for each part we have
\begin{equation*}
\max \{\|E_q \|_1 , \|E_l \|_1 , \|E_o \|_1 ,\|E_c \|_1 \}  \leq \frac{1}{4}\delta_{n+1}.
\end{equation*}

\subsubsection*{Oscillation error}
Due to the fact that each $\chi_{i,n} W_{[i]}$ has disjoint support we can compute the nonlinear term as
\begin{align*}
w_n^p \otimes w_n^p  &=   \sum_{i} 4^{i+1} \delta_{n}\l_{n-1}^{-2 \alpha }  \chi_{i,n}^2 W_{[i]}  \otimes  W_{[i]}^{\mu_n}\Big(\Id - \frac{ \overline{R}_l }{4^{i+1}\delta_{n}\l_{n-1}^{-2 \alpha }}, \sigma_n x   \Big) .
\end{align*}
From Lemma \ref{lemma:mikado_R} it follows
\begin{align} 
w_n^p \otimes w_n^p  &= -\overline{R}_l+ \sum_{i} 4^{i+1} \delta_{n}  \l_{n-1}^{-  2\alpha }  \chi^2_{i,n}  \Id + \sum_{i,k} \rho_{i,k,n}^2  \phi_{i,k}^{\mu_{n}}(\sigma_{n} \cdot) ~ k\otimes k   \label{eq:osc_interation}
\end{align}
where scaler functions $\rho_{i,k,n}\in C^\infty(\TT^d)$ and $\phi_{i,k}^{\mu_{n}} \in C_0^\infty(\TT^d)$ are defined respectively as
$$
\rho_{i,k,n}= 2^{i+1} \delta_{n}^\frac{1}{2}  \l_{n-1}^{-   \alpha }  \chi_{i,n}   \Gamma_k\Big(\Id-\frac{ \overline{R}_l }{4^{i+1}\delta_{n}\l_{n-1}^{-2 \alpha }}    \Big)
$$ 
and 
$$
\phi_{i,k}^{\mu_{n}} =  \big(\psi_{i,k}^{\mu_n}\big)^2 -\fint  \big(\psi_{i,k}^{\mu_n}\big)^2 .
$$
Upon taking divergence, we can find a pressure $P$ to absorb the second term in \eqref{eq:osc_interation} so that
\begin{align*}
\D(  w_n^p \otimes w_n^p  ) + \nabla P &= -\D \overline{R}_l  + \sum_{i,k}\D  \Big( \rho_{i,k,n}^2 \phi_{i,k}^{\mu_{n}}(\sigma_{n} \cdot) ~ k\otimes k      \Big)  
\end{align*}
Noticing the fact that 
$$ 
\D  \Big(  \phi_{i,k}^{\mu_{n}}(\sigma_{n} x) ~ k\otimes k      \Big)  =0
$$ 
we get
\begin{align*}
E_o   & =  \mathcal{R}(\D w_n^p \otimes  w_n^p + \D \overline{R}_l)  \\
& =  \sum_{i,k} \mathcal{R}\big[\nabla \rho_{i,k,n}^2 \phi_{[i],k}^{\mu_{n}}(\sigma_{n} \cdot) ~ k\otimes k      \big]  .
\end{align*}
For the remainder of this part, we fix some $p>1$ (depending on $d$ and $\alpha$) sufficiently close to $1$ such that $L^{p}(\TT^d) \hookrightarrow W^{-\alpha,1}(\TT^d)$, and will estimate $\| E_o\|_p$.

Since $a_{i,k,n} =\rho_{i,k,n} ~k $, we get from Proposition \ref{prop:coef_a_b} that 
$$
\|\nabla^m \rho_{i,k,n}^2 \|_\infty \lesssim \delta_{n}^\frac{1}{2} \l_{n-1}^{-  \alpha } l^{ -m -2d-2} \quad \text{for all $m\in \NN$}
$$ 
and by definition that
$$
\fint_{\TT^d } \phi_{[i],k}^{\mu_{n}}(\sigma_{n} \cdot) =0.
$$

Hence we have the following estimate for $E_o$ by invoking the second part of Proposition \ref{prop:inversediv_commutator2} with $C_a=\delta_{n}^\frac{1}{2} \l_{n-1}^{-  \alpha } l^{  -2d-2}$, $\mu = l^{-1}$, $\sigma=\sigma_n$ and $p$:
\begin{align}\label{eq:oscE1_inversediv}
\| E_{o  } \|_p
&\lesssim \sum_{i,k} \sigma_{n}^{-1+\alpha} \big\| |\nabla|^{-\alpha} \big(\nabla \rho_{i,k,n}^2 \phi_{[i],k}^{\mu_{n}}(\sigma_{n} \cdot )   \big) \big\|_p + \sigma_n^{-100d}
\end{align}
where we have used the bound
\begin{align}\label{eq:phi_L1_1}
\big\|\phi_{[i],k}^{\mu_{n}}(\sigma_{n} \cdot ) \big\|_1 \lesssim 1.
\end{align}
Then the embedding $L^{p}(\TT^d) \hookrightarrow W^{-\alpha,1}(\TT^d)$ implies that
\begin{align}\label{eq:oscE1_inversediv2}
\| E_{o  } \|_p
&\lesssim \sum_{i,k} \sigma_{n}^{-1+\alpha} \big\|   \nabla \rho_{i,k,n}^2 \phi_{[i],k}^{\mu_{n}}(\sigma_{n} \cdot )    \big\|_1 + \sigma_n^{-100d}
\end{align}
Since it is easy to see that
$$
\sigma_n^{-10d} \ll \delta_{n+1},
$$
to bound $\| E_o\|$, it suffices to bound $ \big\| \nabla \rho_{i,k,n}^2 \phi_{[i],k}^{\mu_{n}}(\sigma_{n} \cdot )   \big\|_1$.
We attempt to apply the first part of Proposition \ref{prop:inversediv_commutator2} with the same parameters, but \eqref{eq:oscE1_inversediv} is in $L^1$ rather than $L^2$ and if one uses the small support argument as in the proof of Proposition \ref{prop:w_p} and \ref{prop:w_c}, one has to bound $\|\nabla \rho_{i,k,n} \|_2$, which will be too big and have no decay in view of Proposition \ref{prop:coef_a_b}. To resolve this issue, we appeal to the following heuristics:
$$
\big\| \nabla \rho_{i,k,n}^2 \phi_{[i],k}^{\mu_{n}}(\sigma_{n} \cdot )   \big\|_1 \lesssim   l^{-1} \big\|   \rho_{i,k,n}  \psi_{[i],k}^{\mu_{n}}(\sigma_{n} \cdot )   \big\|_2^2 +\text{Error terms}
$$
We will show a slightly weaker bound in the following. Firstly, Let 
$$
\rho_l=\mathbb{P}_{\leq l^{-1-\alpha}}\rho_{i,k,n}^2  
$$
where the extra factor $\alpha$ will allow us to exploit the derivative bounds for $a_{i,k,n}$.
More precisely, applying the same integration by parts argument as in the proof of Proposition \ref{prop:inversediv_commutator2}, one can show that
\begin{align}\label{eq:smallhighfre_a_ikn}
\|\nabla (\rho_{i,k,n}^2 -\rho_l) \|_\infty +\| \rho_{i,k,n}^2  -\rho_l  \|_\infty\lesssim l^{1000d}
\end{align}
where the implicit constant depends on $\alpha$ and $d$. Using this and \eqref{eq:phi_L1_1}, we have
\begin{align}\label{eq:E1_a2_to_rho_l}
\big\| \nabla \rho_{i,k,n}^2 \phi_{[i],k}^{\mu_{n}}(\sigma_{n} \cdot )   \big\|_1 \lesssim \big\| \nabla \rho_l \phi_{[i],k}^{\mu_{n}}(\sigma_{n} \cdot )   \big\|_1+l^{1000d}.
\end{align}
Thus it suffices to get rid of the derivative on $\rho_l$ and then bound $\big\| \rho_l \phi_{[i],k}^{\mu_{n}}(\sigma_{n} \cdot )   \big\|_1$ .
Using the convolution representation of $ \rho_l$ by the Littlewood-Paley theory 
\begin{align*}
\rho_l= \mathbb{P}_{\leq 2l^{-1-\alpha}}\rho_l 
\end{align*}
we have
\begin{align*}
\big\| \nabla \rho_l \phi_{[i],k}^{\mu_{n}}(\sigma_{n} \cdot )   \big\|_1 =\int \big|\phi_{[i],k}^{\mu_{n}}(\sigma_{n} x )\big|\bigg|\int \rho_l(x-y)\nabla \widetilde{\varphi}_{l^{-1-\alpha}}(y )  dy  \bigg|  dx
\end{align*}
where $ \widetilde{\varphi}_{l^{-1-\alpha}}$ is the Fourier inverse for the frequency cut-off $\mathbb{P}_{\leq 2l^{-1-\alpha}}$.
So by the Fubini's theorem and the bound 
$$
\big\|\nabla \widetilde{\varphi}_{l^{-1-\alpha}} \big\|_1  \lesssim l^{-1-\alpha}
$$
we find that
\begin{align}\label{eq:sup_y_rho_l_L1}
\big\| \nabla \rho_l \phi_{[i],k}^{\mu_{n}}(\sigma_{n} \cdot )   \big\|_1 &=\int \int \Big|\rho_l(x-y)   \phi_{[i],k}^{\mu_{n}}(\sigma_{n} x )\Big| dx \big|\nabla \widetilde{\varphi}_{l^{-1-\alpha}}(y ) \big|  dy  \nonumber \\
& \leq \big\|\nabla \widetilde{\varphi}_{l^{-1-\alpha}} \big\|_1 \sup_y \big\|\rho_l(\cdot - y) \phi_{[i],k}^{\mu_{n}}(\sigma_{n} \cdot ) \big\|_1\nonumber \\
&\lesssim l^{-1-\alpha} \sup_y \big\|\rho_l(\cdot - y) \phi_{[i],k}^{\mu_{n}}(\sigma_{n} \cdot ) \big\|_1.
\end{align}
Thanks to \eqref{eq:smallhighfre_a_ikn} and \eqref{eq:phi_L1_1} again, we get
$$
\Big\| \big[\rho_{i,k,n}^2(\cdot - y)  - \rho_l(\cdot - y)    \big] \phi_{[i],k}^{\mu_{n}}(\sigma_{n} \cdot ) \Big\|_1 \lesssim \big\|   \rho_{i,k,n}^2   - \rho_l        \big\|_\infty  \big\| \phi_{[i],k}^{\mu_{n}}(\sigma_{n} \cdot ) \big\|_1 \lesssim l^{1000d}
$$
where the implicit constant is independent of $y$. Then \eqref{eq:sup_y_rho_l_L1} becomes
\begin{align}\label{eq:sup_y_a_ikn_L1}
\big\| \nabla \rho_l \phi_{[i],k}^{\mu_{n}}(\sigma_{n} \cdot )   \big\|_1 &\lesssim l^{-1-\alpha} \sup_y \big\|\rho_{i,k,n}^2(\cdot - y) \phi_{[i],k}^{\mu_{n}}(\sigma_{n} \cdot ) \big\|_1+l^{1000d}.
\end{align}
Putting together \eqref{eq:oscE1_inversediv}, \eqref{eq:E1_a2_to_rho_l}, and \eqref{eq:sup_y_a_ikn_L1} and using the fact that $\sigma_{n} \leq l^{-10}$ we have
\begin{align}\label{eq:Eo_no_nabla}
\| E_{o  } \|_p
&\lesssim \sum_{i,k} \sigma_{n}^{-1+\alpha} l^{-1-\alpha} \sup_y \big\|\rho_{i,k,n}^2(\cdot - y) \phi_{[i],k}^{\mu_{n}}(\sigma_{n} \cdot ) \big\|_1 + \sigma_n^{-10d} .
\end{align}
For each fix $y \in \TT^d$ we compute that
\begin{align}\label{eq:Eo_a_psi_L1toL2}
\big\|\rho_{i,k,n}^2(\cdot - y) \phi_{[i],k}^{\mu_{n}}(\sigma_{n} \cdot ) \big\|_1 \leq \big\|\rho_{i,k,n} (\cdot - y)  \psi_{[i],k}^{\mu_{n}}(\sigma_{n} \cdot )   \big\|_2^2 + \big\|\rho_{i,k,n} \big \|_2^2 
\end{align}
and now we can apply the first part of Proposition \ref{prop:inversediv_commutator2} with the parameters $C_a=\delta_{n}^\frac{1}{2} \l_{n-1}^{-  \alpha } l^{  -2d-2}$, $\mu = l^{-1}$, and $\sigma=\sigma_n$ to obtain
\begin{align}\label{eq:Eo_a_psi_L2}
 \big\|\rho_{i,k,n} (\cdot - y)  \psi_{[i],k}^{\mu_{n}}(\sigma_{n} \cdot )   \big\|_2  & \lesssim \big\|\rho_{i,k,n} (\cdot - y)\big\|_2  \big\|\psi_{[i],k}^{\mu_{n}}(\sigma_{n} \cdot )   \big\|_2  +\sigma_n^{-10d} \nonumber\\
&\lesssim  \delta_{n }^{\frac{1}{2}} \l_{n-1 }^{- \alpha}     +\sigma_n^{-10d} 
\end{align}
where we have used Proposition \ref{prop:coef_a_b} to get the bound of $\|\rho_{i,k,n} \|_2\sim \|a_{i,k,n} \|_2 $.
Therefore, from  \eqref{eq:Eo_no_nabla}, \eqref{eq:Eo_a_psi_L1toL2}, and \eqref{eq:Eo_a_psi_L2} it follows that
\begin{align*}
\| E_{o  } \|_p
&\lesssim \sum_{i,k} \sigma_{n}^{-1+\alpha} l^{-1-\alpha}\delta_{n }  \l_{n-1 }^{-2 \alpha}   .
\end{align*}
Again by taking a sufficiently large $a$ and using $\l_{n-1 }^{-2 \alpha}$ to absorb the constant and the logarithmic factor causing by the summation in $i$, we can ensure that
\begin{align*}
\| E_{o  } \|_1 &\leq \| E_{o  } \|_p \leq  \frac{1}{4}\sigma_n^{-1+\alpha} l^{-1-\alpha} \delta_{n }   .
\end{align*}
In view of the choice of constants $d \geq 4$, $\alpha = 10^{-6}$, $\beta =\frac{1}{200}$ and $b=5$, we have the following numerical inequality
\begin{align*}
- \frac{1+\alpha}{4}+ \frac{(1-\beta+\alpha)(1+\alpha)}{b} -\beta    < -2b \beta
\end{align*}
which implies that
\begin{align}\label{eq:final_E_o}
\| E_{o  } \|_1 \leq \frac{1}{4} \sigma_n^{-1} l^{-1-\alpha} \delta_{n }   \leq  \frac{1}{4}   \delta_{n+1 }   .
\end{align}

\subsubsection*{Linear error}
For the linear error, we first use Lemma \ref{lemma:inverse_div} to obtain
\begin{align*}
\|E_l \|_1 & =  \| \mathcal{R}\Delta  w_n \|_1 \leq  2 \|   \nabla w_n  \|_1 .
\end{align*}
Then we can simply use the estimates \eqref{eq:w_p_L1} and \eqref{eq:w_c_L1} to get
\begin{align*} 
\|E_l \|_1 & \leq    2\| \nabla w_n^p \|_1 +2\| \nabla w_n^c \|_1 \\
& \leq  \frac{1}{4}\delta_{n}^\frac{1}{2}    \l_{n} \mu_n^{\frac{-d+1}{2}}   . 
\end{align*}
To check the validity of $\|E_l \|_1 \leq \frac{1}{4} \delta_{n+1}$, we need to make sure that
\begin{align}
\delta_{n}^\frac{1}{2}    \l_{n} \mu_n^{\frac{-d+1}{2}}  \leq \delta_{n+1}
\end{align}
which after taking logarithm and using the definitions of various constants is equivalent to 
\begin{align}
-\beta + 1 + \frac{3}{4}\frac{1-d}{2} \leq -2b\beta.
\end{align}
Since $d \geq 4$, $\beta = \frac{1}{200}$ and $b=5$ the above inequality holds trivially. So we can conclude that
\begin{align}\label{eq:final_E_l}
\|E_l \|_1 \leq \frac{1}{4} \delta_{n+1}.
\end{align}

\subsubsection*{Quadratic error}
Thanks to Lemma \ref{lemma:inverse_div}, we need to estimate the terms
\begin{align*}
\|E_q \|_1 & \leq    \|\mathcal{R} \D  (\ul \otimes w_n^p) \|_{1}  +\|\mathcal{R} \D  (\ul \otimes w_n^p) \|_{1} \\
&:= \|E_{q1} \|_1 +\|E_{q2} \|_1 .
\end{align*}
Let us show that $\|E_{qj} \|\leq \frac{1}{8} \delta_{n+1} $ for $j=1,2$ in the following.

For the first term $E_{q1}$, we have by the $L^p$ boundedness of the Riesz transform, $p>1$ that
\begin{align*}
\|E_{q1} \|_1 \leq \|E_{q1} \|_p \lesssim_p \sum_{i,k} \|\ul \otimes  a_{i,k,n}  \psi_{i,k}^{\mu_n}(\sigma_n \cdot) \|_p.
\end{align*}
Then by H\"older's inequality and the fact that $\big| \Supp \varphi_{i,k,n}\big| \lesssim \mu_n^{-d+1}$
\begin{align}\label{eq:Qua_holder}
\|\ul \otimes  a_{i,k,n}  \psi_{i,k}^{\mu_n}(\sigma_n \cdot) \|_p &\lesssim \| \ul\|_\infty   \|   a_{i,k,n}  \varphi_{i,k,n} \|_2 \mu_n^{(d-1) ( \frac{1}{p} -\frac{1}{2})} 
\end{align}
From \eqref{eq:w_p_a_ik_L2} in Proposition \ref{prop:w_p} we know that
\begin{align}\label{eq:Qua_w_p_a_ik_L2}
\|   a_{i,k,n}  \varphi_{i,k,n} \|_2 \lesssim \delta_n^\frac{1}{2} \l_{n-1 }^{-\alpha}
\end{align}
and from Lemma \ref{lemma:mollidied_u_estimate} and the Sobolev embedding  $H^{\frac{d+1}{2}} \hookrightarrow L^\infty$ we get
\begin{align}\label{eq:Qua_ul_infty}
\|\ul \|_\infty \lesssim \|\ul \|_{H^{\frac{d+1}{2}} } \lesssim l^{- \frac{d+1}{2}}
\end{align}
Now choosing $p>1$ sufficiently close to $1$ such that
$$
\mu_n^{(d-1) ( \frac{1}{p} -\frac{1}{2})} \leq \mu_n^{    -\frac{d-1 }{2} } \l_{n-1}^\alpha
$$
and combining \eqref{eq:Qua_holder}, \eqref{eq:Qua_w_p_a_ik_L2} and \eqref{eq:Qua_ul_infty} we have
$$
\|\ul \otimes  a_{i,k,n}  \psi_{i,k}^{\mu_n}(\sigma_n \cdot) \|_p  
 \lesssim l^{- \frac{d+1}{2}}   \delta_n^\frac{1}{2}  \mu_n^{    -\frac{d-1 }{2} } 
$$

Since $d \geq 4$, $b=5$ and $\beta =\frac{1}{200}$, by taking a sufficiently large $a$ we have
\begin{equation*}
l^{- \frac{d+1}{2}}   \delta_n^\frac{1}{2}  \mu_n^{    -\frac{d-1 }{2} }    \ll\l_{n+1}^{-2\beta} =\delta_{n+1 }.
\end{equation*}
So it follows that
\begin{align}\label{eq:final_E_q1}
\|E_{q1} \|_1 \leq \frac{1}{8} \delta_{n+1 }  .
\end{align}

For $E_{q2}$ we will use a simple argument to get a very crude bound that suffices for our purpose. We first obtain by the $L^p$ boundedness of the Riesz transform, $p>1$ and H\"older's inequality that 
\begin{align*}
\|E_{q2} \|_1 \leq \|E_{q2} \|_p \lesssim_p \|\ul \otimes  w_n^c\|_p \leq \|\ul\|_2    \| w_n^c\|_{\frac{2p}{2-p}}.
\end{align*}
Then choosing $p>1$ sufficiently close to $1$ such that in view of Proposition \ref{prop:w_c} we have
$$
\| w_n^c\|_{\frac{2p}{2-p}} \leq \frac{1}{8} \delta_{n }^\frac{1}{2} \mu_n^{\frac{-d+1}{2}} l^{-1} \sigma_n^{-1} \l_{n-1}^\alpha
$$
Then we get
\begin{align*}
\|E_{q2} \|_1 \lesssim   \delta_{n}^\frac{1}{2}    l^{-1}  \sigma_{n}^{-1}i_{\max} \l_{n-1}^\alpha.
\end{align*}
Again using $d \geq 4$, $b=5$ and $\beta =\frac{1}{200}$ we find that
\begin{align*}
\frac{1-\beta+2\alpha}{b} - \frac{1}{4}  < -2b \beta.
\end{align*}
Since for any $\epsilon>0$, there exists $a$ sufficiently large so that $i_{\max} \leq \l_n^{\epsilon}$. Then by taking $a$ sufficiently large we can ensure that
\begin{align*}
\|E_{q2} \|_1 \lesssim   \delta_{n}^\frac{1}{2}    l^{-1}  \sigma_{n}^{-1} i_{\max} =   \l_n^{ \frac{1-\beta+2\alpha}{b} - \frac{1}{4}} i_{\max} \ll \delta_{n+1 },
\end{align*}
So provided that $a$ is large enough, we can conclude that
\begin{align}\label{eq:final_E_q2}
\|E_{q2} \|_1 \leq \frac{1}{8} \delta_{n+1 }.
\end{align}

\begin{remark}
One may notice that the bound we obtained for $E_{q2}$ is worse than that of $E_{q1} $. In fact $E_{q2}$ should be much smaller than $E_{q1}$ since $w^c_n$ is much smaller than $w^p_n$. As we do not plan to obtain the optimal regularity of the final solution $u$, a rougher bound for $E_{q2}$ still suffices for our purpose.
\end{remark}

\subsubsection*{Correction error}
It follows directly from Lemma \ref{lemma:inverse_div} and H\"older's inequality that
\begin{align*}
\|E_c \|_1 & = \| \mathcal{R}\D ( w_n^c \otimes  w_n^p +  w_n^p \otimes  w_n^c +   w_n^c \otimes  w_n^c) \|_1 \\
& \leq\|  w_n^c \otimes  w_n^p +  w_n^p \otimes  w_n^c +   w_n^c \otimes  w_n^c     \|_1 \\
& \leq  \|  w_n^c\|_2  \| w_n^p\|_2  +  \|w_n^c \|_2^2   .
\end{align*}
From \eqref{eq:w_p_L2} and \eqref{eq:w_c_L2} we get 
\begin{align*}
\|w_n^c \|_2^2 &  \leq  \frac{1}{16}\delta_{n}   \l_{n}^{-2} l^{-2}  \\
\|  w_n^c\|_2  \| w_n^p\|_2  &  \leq  \frac{1}{16} \delta_{n}    \l_{n}^{-1} l^{-1} .
\end{align*}
So 
\begin{align*}\label{eq:new_R_E_c}
\|E_c \|_1  &  \leq \frac{1}{4} \delta_{n}    \l_{n}^{-1} l^{-1}
\end{align*}
which is smaller than the final estimate for $E_{q2}$ and thus we conclude that
\begin{equation}\label{eq:final_E_c}
\|E_c \|_1     \leq \frac{1}{4} \delta_{n+1}   .
\end{equation}

\stepcounter{steps}
\subsection*{Step \arabic{steps}: Check inductive hypothesis \texorpdfstring{\eqref{eq:hypothesis1}}{H1}}
Adding up estimates \eqref{eq:final_E_o}, \eqref{eq:final_E_l}, \eqref{eq:final_E_q1}, \eqref{eq:final_E_q2} ,and \eqref{eq:final_E_c} we obtain the bound for the new Reynolds stress
\begin{equation}
\| R_{n} \|_1 \leq \|E_o \|_1+\|E_l \|_1+\|E_{q1} \|_1+\|E_{q2} \|_1+\|E_c \|_1 \leq \delta_{n+1}   .
\end{equation}
So \eqref{eq:hypothesis1} is verified and the proof of Proposition \ref{prop:main_iteration} is completed.

\appendix

\section{An Estimate for H\"older norms}
We collect here the following classical result on the H\"older norms of composition of functions. A proof using the multivariable chain rule can be found in \cite{MR3254331}.
\begin{proposition}\label{prop:holder_composition}
Let $F : \Omega \to \RR$ be a smooth function with $\Omega \subset \RR^d$. For any smooth function $u :\RR^d \to \Omega$ and any $1 \leq m \in \NN$ we have
\begin{align}\label{eq:holder_composition}
\|\nabla^m(F\circ u) \|_\infty &\lesssim \|\nabla^{m}   u \|_\infty \sum_{1\leq i\leq m} \|\nabla^i F \|_\infty  \| u \|_{\infty}^{i-1}  
\end{align}
where the implicit constant depends on $m$, $d$.
\end{proposition}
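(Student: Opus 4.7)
The plan is to prove the estimate by induction on $m$. The base case $m=1$ is the chain rule: $\nabla(F\circ u) = (\nabla F)(u)\cdot\nabla u$, so taking sup norms immediately yields $\|\nabla(F\circ u)\|_\infty \leq \|\nabla F\|_\infty \|\nabla u\|_\infty$, which matches the statement with the single $i=1$ term on the right.

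For the inductive step, I would differentiate the induction hypothesis for $\nabla^{m-1}(F\circ u)$ once more and apply Leibniz's rule. Two types of contributions arise: the derivative hits the factor $(\nabla^i F)(u)$, which by the chain rule produces $(\nabla^{i+1}F)(u)\cdot\nabla u$ and raises the index $i$; or it hits a single factor $\nabla^{k_j} u$, raising its order from $k_j$ to $k_j+1$. Iterating yields the multivariate Fa\`a di Bruno expansion
\begin{equation*}
\nabla^m(F\circ u) \;=\; \sum_{i=1}^{m} (\nabla^i F)(u) \cdot \sum_{\substack{k_1+\cdots+k_i=m\\ k_j \geq 1}} c_{k_1,\ldots,k_i}\, \nabla^{k_1}u \otimes \cdots \otimes \nabla^{k_i}u,
\end{equation*}
with combinatorial constants $c_{k_1,\ldots,k_i}$ depending only on $m$ and $d$. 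Taking sup norms monomial by monomial controls each term by $\|\nabla^i F\|_\infty \prod_j \|\nabla^{k_j} u\|_\infty$.

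To distill this into the stated form, I would isolate one distinguished factor carrying the highest derivative and absorb the remaining $i-1$ factors into powers of $\|u\|_\infty$. For an admissible tuple with $k_1+\cdots+k_i=m$ and each $k_j \geq 1$, a Gagliardo--Nirenberg interpolation $\|\nabla^k u\|_\infty \lesssim \|\nabla^m u\|_\infty^{k/m}\|u\|_\infty^{1-k/m}$ combined with $\sum_j k_j = m$ gives $\prod_j \|\nabla^{k_j}u\|_\infty \lesssim \|\nabla^m u\|_\infty \,\|u\|_\infty^{i-1}$. Summing over the finitely many admissible $i$-tuples absorbs the combinatorial constants into the implicit constant and collapses the inner sum into the desired factor $\|\nabla^i F\|_\infty\|u\|_\infty^{i-1}\|\nabla^m u\|_\infty$.

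The main obstacle is the combinatorial bookkeeping required to pass cleanly from the Fa\`a di Bruno expansion to the specific form of the bound, i.e.\ tracking how the interpolation step redistributes derivatives so that exactly one factor of $\|\nabla^m u\|_\infty$ is extracted while the remaining factors contribute only powers of $\|u\|_\infty$. Beyond this accounting step the estimate reduces to elementary products and the claim follows with implicit constant depending only on $m$ and $d$, as stated; a careful execution along these lines is carried out in \cite{MR3254331}.
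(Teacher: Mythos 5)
Your argument is correct and is essentially the standard proof of this classical estimate: the paper itself gives no proof but defers to \cite{MR3254331}, where the result is obtained in exactly this way, via the multivariate Fa\`a di Bruno expansion followed by the interpolation $\|\nabla^k u\|_\infty \lesssim \|\nabla^m u\|_\infty^{k/m}\|u\|_\infty^{1-k/m}$ to convert each monomial $\prod_j\|\nabla^{k_j}u\|_\infty$ with $\sum_j k_j=m$ into $\|\nabla^m u\|_\infty\|u\|_\infty^{i-1}$. No gaps; the bookkeeping you describe is the whole content of the proof.
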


\section{Constantin-E-Titi commutator estimate}\label{appendix:CET}
The following commutator-type estimate originates from the one proved in \cite{MR1298949}. Compared with other versions used in \cite{1701.08678,MR3374958} the one stated below is homogeneous, i.e. it only involves highest order derivative. For reader's continence we include a proof here following closely the argument of Lemma 1 in \cite{MR3289360}.
\begin{proposition}\label{prop:CET_commutator}
Let $f,g \in C^\infty(\TT^d)$ and let $\eta_\epsilon$ be a family of mollifier. For any $m\in \NN$ and $1\leq p \leq \infty$ we have
\begin{equation}\label{eq:CET_commutator}
\Big\|\nabla^m \big[(fg)*\eta_\epsilon -(f*\eta_\epsilon )(g*\eta_\epsilon)  \big] \Big\|_p\lesssim  \epsilon^{ 2-m} \|\nabla  f\|_{2p}  \| \nabla g \|_{2p}
\end{equation}
\end{proposition}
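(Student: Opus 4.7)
The plan is to derive an integral representation for the commutator that makes both the $\epsilon^2$ cancellation and the $\epsilon^{-m}$ loss from the derivatives manifest, and then conclude by H\"older and Minkowski. First I would use $\int \eta_\epsilon=1$ to verify the symmetric identity
\begin{align*}
(fg)*\eta_\epsilon(x) - (f*\eta_\epsilon)(g*\eta_\epsilon)(x) = \tfrac{1}{2}\int\!\!\int \eta_\epsilon(y)\eta_\epsilon(z)\,[f(x-y) - f(x-z)][g(x-y) - g(x-z)]\, dy\, dz,
\end{align*}
obtained by expanding the product and recognising the four resulting terms. Taylor's theorem then gives $f(x-y) - f(x-z) = (z-y)\cdot \int_0^1 \nabla f(\phi_s)\, ds$ with $\phi_s = x - (1-s)y - s z$, and similarly for $g$ with an independent parameter $t\in[0,1]$ and corresponding $\phi_t$. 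Rescaling $y = \epsilon y'$, $z = \epsilon z'$ then produces
\begin{align*}
F(x) := (fg)*\eta_\epsilon(x) - (f*\eta_\epsilon)(g*\eta_\epsilon)(x) = \tfrac{\epsilon^2}{2}\int_0^1\!\!\int_0^1\!\!\int\!\!\int \eta(y')\eta(z')\, (z'-y')_i(z'-y')_j\, (\partial_i f)(\phi_s)(\partial_j g)(\phi_t)\, dy'dz' ds dt.
\end{align*}
For $m=0$ this already gives the claim by Minkowski in $(y',z',s,t)$ and H\"older in $x$ with exponents $(2p,2p)$.

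For $m\geq 1$, differentiating under the integral places derivatives on $\partial_i f(\phi_s)$ and $\partial_j g(\phi_t)$. The key identity is
\begin{align*}
\partial_{x_k}[f(\phi_s)] = -[\epsilon(1-s)]^{-1}\partial_{y'_k}[f(\phi_s)] = -[\epsilon s]^{-1}\partial_{z'_k}[f(\phi_s)],
\end{align*}
so an $x$-derivative can be traded for a $y'$- or $z'$-derivative at cost at most $2\epsilon^{-1}$ provided $s$ stays away from the appropriate endpoint. I would insert a partition of unity on $[0,1]^2_{s,t}$ into four regions according to whether $s,t$ are $\leq 1/2$ or $\geq 1/2$. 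On each region, I distribute the $m$ derivatives between the two factors by the product rule, route each derivative on $f$ to whichever of $y'$ or $z'$ carries a coefficient $\geq 1/2$, do the same (independently) for $g$, and then integrate by parts in $y', z'$ to transfer all derivatives onto the bounded, compactly supported kernel $\eta(y')\eta(z')(z'-y')_i(z'-y')_j$. The resulting expression is of the form $\epsilon^{2-m}\int_0^1\!\!\int_0^1\!\!\int\!\!\int K(y',z',s,t)(\nabla f)(\phi_s)(\nabla g)(\phi_t)\, dy'dz' ds dt$ with $K$ uniformly bounded in $L^1(dy'dz')$ and uniformly in $(s,t)$; Minkowski in the inner variables and H\"older with exponents $(2p,2p)$ in $x$ then yield the claim.

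The main technical obstacle is the bookkeeping of the integration by parts: one must choose the routing of $y'$ versus $z'$ derivatives region-by-region so that no singular factors $(1-s)^{-k}$ or $s^{-k}$ with $k\geq 1$ ever appear, and then verify that the kernel obtained after all derivatives have landed on $\eta(y')\eta(z')(z'-y')_i(z'-y')_j$ is still integrable on the compact support of $\eta$. The symmetric double-mollification identity, which offers the alternative of absorbing a given derivative in either $y'$ or $z'$, is precisely what makes this rerouting possible; without it, a naive application of $\partial_x = -[\epsilon(1-s)]^{-1}\partial_{y'}$ would produce non-integrable $s^{-k}$ factors near $s=0$.
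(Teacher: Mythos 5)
Your symmetric double-mollification identity, the Taylor representation with $\phi_s$, $\phi_t$, and the $m=0$ case are all correct, and the final kernel form you aim for is indeed achievable. But the integration-by-parts step for $m\geq 1$, as you describe it, has a genuine gap, and it is not the one you flag. The problem is not the singular factors $s^{-k}$ or $(1-s)^{-k}$ (your partition of unity handles those); it is that $\phi_t$ depends on \emph{both} $y'$ and $z'$, so when you integrate by parts in, say, $y'$ to strip the derivatives off $(\partial_i f)(\phi_s)$, the derivative $\partial_{y'}$ also falls on $(\partial_j g)(\phi_t)$ and produces $-\epsilon(1-t)(\nabla\partial_j g)(\phi_t)$. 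This is a second derivative of $g$, which is not controlled by $\|\nabla g\|_{2p}$ and therefore cannot appear in the final expression at all. Trying to remove it by converting it back to a $y'$- or $z'$-derivative and integrating by parts again just deposits a second derivative on $f$ with a compensating factor of order one (e.g.\ $(1-t)/(1-s)\leq 2$ on your region), so the regress never terminates. Already for $m=1$ in one dimension the term $\frac{(1-t)}{(1-s)}\int f'(\phi_s)\,\eta(y')\eta(z')(z'-y')^2 g''(\phi_t)\,dy'$ survives your procedure and is not of the claimed form.

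The fix is to transfer derivatives with the vector field $\partial_{y'}+\partial_{z'}$ rather than with $\partial_{y'}$ or $\partial_{z'}$ separately: since $(1-s)+s=1$, one has $\partial_{x_k}[h(\phi_s)]=-\epsilon^{-1}(\partial_{y'_k}+\partial_{z'_k})[h(\phi_s)]$ \emph{uniformly in $s$}, hence $\partial_x^m$ of the product $(\partial_i f)(\phi_s)(\partial_j g)(\phi_t)$ equals $(-\epsilon)^{-m}(\partial_{y'}+\partial_{z'})^m$ of it, and $m$ integrations by parts put every derivative onto the kernel $\eta(y')\eta(z')(z'-y')_i(z'-y')_j$ with no partition of unity and no stray derivatives on $f$ or $g$. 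Equivalently, differentiate your identity \emph{before} applying Taylor, using $\partial_x[f(x-y)-f(x-z)]=-(\partial_y+\partial_z)[f(x-y)-f(x-z)]$. With that repair your argument is a correct, symmetrized variant of the paper's proof: the paper instead expands $\partial_\alpha$ by the product rule so that all derivatives land on the mollifier from the outset, regroups into the Constantin--E--Titi remainder $r_\alpha(f,g)=\int[f(x-y)-f(x)][g(x-y)-g(x)]\partial_\alpha\eta_\epsilon(y)\,dy$ plus products $(f-f(x))*\partial_{\alpha-\beta}\eta_\epsilon\cdot(g-g(x))*\partial_\beta\eta_\epsilon$, and estimates these using $\|f(\cdot-y)-f(\cdot)\|_{2p}\lesssim|y|\,\|\nabla f\|_{2p}$, which accomplishes the same cancellation without ever differentiating $f$ or $g$ under the integral.
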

\begin{proof}
It suffices to prove for any multi-index $\alpha$ with $|\alpha|=m$ the following estimate:
\begin{equation}
\Big\|\partial_\alpha \big[(fg)*\eta_\epsilon -(f*\eta_\epsilon )(g*\eta_\epsilon)  \big] \Big\|_p\lesssim  \epsilon^{ 2-m} \|\nabla  f\|_{2p}  \| \nabla g \|_{2p}.
\end{equation}
By the product rule and the fact that mollification commutes with differentiation we compute
\begin{align*}
\partial_\alpha & \big[   (fg)*\eta_\epsilon  - (f*\eta_\epsilon )(g*\eta_\epsilon) \big]\\
&=  (fg)* \partial_\alpha \eta_\epsilon -  \sum_{\beta } C_\beta^\alpha (f* \partial_{\alpha-\beta} \eta_\epsilon )(g*\partial_{\beta}\eta_\epsilon) 
\end{align*}
where the summation is taking over all multi-index $0 \leq \beta \leq \alpha$. So
\begin{align*}
\partial_\alpha  \big[  (fg)*\eta_\epsilon - (f*\eta_\epsilon )(g*\eta_\epsilon)\big] 
&=   (fg)* \partial_\alpha \eta_\epsilon - (f* \partial_{\alpha } \eta_\epsilon )(g* \eta_\epsilon)- (f*  \eta_\epsilon )(g* \partial_{\alpha } \eta_\epsilon)  \\
 & \qquad- \sum_{\beta\neq 0,\alpha }  C_\beta^\alpha (f* \partial_{\alpha-\beta} \eta_\epsilon )(g*\partial_{\beta}\eta_\epsilon)
\end{align*}

The fact that $\int \eta_\epsilon =1$ and $\int \partial \eta_\epsilon =0$ implies
\begin{align*}
f*  \eta_\epsilon  =   [f-f(x)]*  \eta_\epsilon+ f(x) \quad \text{and }\quad f*  \partial_\beta \eta_\epsilon   = [f-f(x)]*  \partial_\beta\eta_\epsilon
\end{align*}
for any multi-index $\beta\neq 0$.
Let 
$$
r_\alpha(f,g)=\int \big[f(x-y) -f(x)\big]     \big[g(x-y) -g(x)\big]  \partial_\alpha \eta_\epsilon(y)dy,
$$
and it follows
\begin{align*}
r_\alpha(f,g) 
&= (fg)*  \partial_\alpha \eta_\epsilon    -(f*  \eta_\epsilon)( g *  \partial_\alpha \eta_\epsilon  ) -  ( f *  \partial_\alpha \eta_\epsilon ) (g*  \eta_\epsilon)\\
& \qquad + [f-f(x)]*  \eta_\epsilon( g *  \partial_\alpha \eta_\epsilon  ) +  ( f *  \partial_\alpha \eta_\epsilon ) [g-g(x)]*  \eta_\epsilon\\
& =(fg)*  \partial_\alpha \eta_\epsilon    -(f*  \eta_\epsilon)( g *  \partial_\alpha \eta_\epsilon  ) -  ( f *  \partial_\alpha \eta_\epsilon ) (g*  \eta_\epsilon)\\
& \qquad + [f-f(x)]*  \eta_\epsilon( [g-g(x)] *  \partial_\alpha \eta_\epsilon  ) +  ( [f-f(x)] *  \partial_\alpha \eta_\epsilon ) [g-g(x)]*  \eta_\epsilon
\end{align*}
and
\begin{align*}
\sum_{\beta\neq 0,\alpha }  C_\beta^\alpha (f* \partial_{\alpha-\beta} \eta_\epsilon )(g*\partial_{\beta}\eta_\epsilon) = \sum_{\beta\neq 0,\alpha }  C_\beta^\alpha \Big[(f-f(x))* \partial_{\alpha-\beta} \eta_\epsilon \Big]    \Big[(g-g(x))*\partial_{\beta}\eta_\epsilon  \Big].
\end{align*}
Putting together the preceding two equations we have
\begin{align*}
\partial_\alpha  \big[  (fg)*\eta_\epsilon - (f*\eta_\epsilon )(g*\eta_\epsilon)\big] 
&=   r_\alpha(f,g) - \sum_{\beta }  C_\beta^\alpha \big(   f-f(x)  \big)*\partial_{\alpha-\beta}\eta_\epsilon \cdot \big( g-g(x)   \big)  *\partial_{\beta}\eta_\epsilon.
\end{align*}

On the one hand by Minkowski's inequality we have
\begin{align*}
\Big\|  \int \big[f(x-y) -f(x)\big]  &  \big[g(x-y) -g(x)\big]  \partial_\alpha \eta_\epsilon(y)dy\Big\|_p \lesssim \\
& \qquad    \int \big\|  f(\cdot-y) -f(\cdot)\big\|_{2p}  \big\| g(\cdot-y) -g(\cdot) \big\|_{2p}  \partial_\alpha \eta_\epsilon(y)dy .
\end{align*}
From the integral form of Mean Value Theorem and Minkowski's inequality it follows that
\begin{align*}
& \big\|  f(\cdot-y) -f(\cdot)\big\|_{2p} \lesssim |y| \| \nabla f \|_{2p} \\
&\big\|  g(\cdot-y) -g(\cdot)\big\|_{2p} \lesssim |y| \| \nabla g \|_{2p} 
\end{align*}
which enables us to obtain
\begin{align*}
\Big\|  \int \big[f(x-y) -f(x)\big]   \big[g(x-y) -g(x)\big]  \partial_\alpha \eta_\epsilon(y)dy  \Big\|_p  &\lesssim 
   \| \nabla f \|_{2p}  \| \nabla g \|_{2p}  \int  |y|^2 \partial_\alpha \eta_\epsilon(y)dy \\
&\lesssim  \epsilon^{ 2-m} \|\nabla  f\|_{2p}  \| \nabla g \|_{2p}.
\end{align*}

On the other hand by H\"older's inequality we have
\begin{align*}
\Big\| \sum_{\beta }   C_\beta^\alpha \Big[ (f-f(x))* \partial_{\alpha-\beta} \eta_\epsilon    \Big] &\Big[(g-g(x))*\partial_{\beta}\eta_\epsilon\Big] \Big\|_p \\
& \lesssim \sum_{\beta }  \big\|   (f-f(x))* \partial_{\alpha-\beta} \eta_\epsilon \big\|_{2p}   \big\| (g-g(x))*\partial_{\beta}\eta_\epsilon)   \big\|_{2p} \\
& \lesssim \epsilon^{m-2} \|  \nabla f \|_{2p}\|  \nabla g \|_{2p}
\end{align*}
where we have used the fact that $\| (f-f(x))* \partial_{ \beta} \eta_\epsilon\|_{2p} \lesssim \epsilon^{\beta -1 } \| \nabla f \|_{2p}  $.

Therefore
\begin{equation*}
\Big\|\partial_\alpha \big[(fg)*\eta_\epsilon -(f*\eta_\epsilon )(g*\eta_\epsilon)  \big] \Big\|_p\lesssim  \epsilon^{ 2-m} \|\nabla  f\|_{2p}  \| \nabla g \|_{2p}.
\end{equation*}
\end{proof}

\section{Proof of Proposition \ref{prop:inversediv_commutator2}}\label{appendix:proofofproposition}
We provide a proof of Proposition \ref{prop:inversediv_commutator2} using the Littlewood-Paley decomposition.

\begin{proof}[Proof of Proposition \ref{prop:inversediv_commutator2}]
 
By considering $\widetilde{a}:= \frac{1}{C_a} a$ it suffices to prove both of the results for $C_a =1$. Notice that since $p\geq 2$ is even, the function $a^p$ as a composition of $a:\TT^d \to [-1,1]$ and $x^p$ is smooth. Therefore, applying Proposition \ref{prop:holder_composition} we see that
\begin{equation}\label{eq:removedabs}
\begin{aligned}
\| \nabla^m |a|^p \|_\infty &\lesssim_p   \|\nabla^m a \|_\infty + \sum_{i\leq m} \|\nabla a \|_\infty^{i-1}\nonumber \\
&\lesssim_p  \mu^{m} 
\end{aligned}\qquad \text{for any $m\in \NN$}.
\end{equation}

We can now introduce the split:
\begin{align*}
\|    a     f \|_p^p 
& \leq \Big| \int_{\TT^d}  (a^p -\overline{|   a |^p }  )  (|    f|^p -\overline{|    f|^p } )  dx    \Big|+  2\|   a \|_p^p \|f \|_p^p .
\end{align*}
We will apply a standard integration by parts argument to get\footnote{The nonlocal operators $|\nabla|^s $ and $|\nabla|^{-s}$ are defined respectively by multipliers with symbols $|k|^{s}$ and $|k|^{-s}$ for $k\neq 0$.}
\begin{align*}
\|    a     f \|_p^p  
& \leq \Big| \int_{\TT^d} |\nabla|^{M}  (a^p -\overline{|   a |^p }  ) |  \nabla|^{-M}(|    f|^p -\overline{|    f|^p } )  dx    \Big|+  2\|   a \|_p^p \|f \|_p^p  .
\end{align*}
We need show the first term is very small. By H\"older's inequality:
\begin{align}\label{eq:smallerror}
\Big| \int_{\TT^d} |\nabla|^{M}     (a^p -\overline{|   a |^p }  ) |  \nabla|^{-M}(|    f|^p -\overline{|    f|^p } )  dx    \Big| \lesssim \| |\nabla|^{M}      a^p\|_2 \Big\| |  \nabla|^{-M}(|    f|^p -\overline{|    f|^p } ) \Big\|_2 
\end{align}
 
By the $L^2$ boundedness of Riesz transform we can replace the nonlocal $|\nabla|^{M}$ by $\nabla^M$ to obtain
\begin{align}\label{eq:removednonlocal}
\Big\| |\nabla|^{M}      a^p \Big\|_2 \lesssim  \Big\| \nabla^{M}       a^p  \Big\|_2.
\end{align}
Since the domain is $\TT^d$, due to the estimate \eqref{eq:removedabs} we see that
\begin{align}\label{eq:removedsobolev}
 \Big\| \nabla^{M}       a^p  \Big\|_2  \leq  \big\| \nabla^{M}       a^p  \big\|_{\infty} \lesssim \mu^{M }.
\end{align}
Thus, putting together \eqref{eq:removednonlocal}, \eqref{eq:removedsobolev} we get
\begin{align}\label{eq:smallerror1}
\Big\| |\nabla|^{M}       a^p  \Big\|_{2}   \lesssim \mu^{M }.
\end{align}

We turn to estimate the second factor in \eqref{eq:smallerror}. Considering the fact that the function $(|    f|^p -\overline{|    f|^p } ) $ is zero-mean and $\sigma^{-1} \TT^d $-periodic we have
\begin{align*}
\Big\| |  \nabla|^{-M}(|    f|^p -\overline{|    f|^p } ) \Big\|_2  &\lesssim  \sigma^{-M+d} \Big\| \nabla|^{-d} (|    f|^p -\overline{|    f|^p } )   \Big\|_2 \\
&\lesssim \sigma^{-M+d} \Big\|  (|    f|^p -\overline{|    f|^p } )   \Big\|_1\\
& \lesssim \sigma^{-M+d}\|  f \|_p^p
\end{align*}
where the first inequality is a direct consequence of Littlewood-Paley theory and the second inequality follows from the Sobolev embedding $L^1(\TT^d)\hookrightarrow H^{-d}(\TT^d)  $.

So with this and the estimate \eqref{eq:smallerror1} we find that
\begin{align*}
\Big| \int_{\TT^d} |\nabla|^{M}     (a^p -\overline{|   a |^p }  ) |  \nabla|^{-M }(|    f|^p -\overline{|    f|^p } )  dx 
&\lesssim \sigma^{-M+d} \mu^{M }       \|  f  \|_p^p .
\end{align*}
By the assumption that $\mu\leq \sigma^{1-\theta}$, there exists a number $M_{\theta,p,N}\in \NN$ sufficiently large so that 
\begin{equation}
\sigma^{-M+d} \mu^{M} \leq \sigma^{-N p }.
\end{equation}
Then, we have
\begin{align*}
\Big| \int_{\TT^d} |\nabla|^{M}     (a^p -\overline{|   a |^p }  ) |  \nabla|^{-M}(|    f|^p -\overline{|    f|^p } )  dx  
&\lesssim \sigma^{-N p }       \|  f  \|_p^p 
\end{align*}
which finishes the proof of \eqref{eq:holder_commutator1} due to the elementary inequality $(a^p+b^p) \leq (a +b)^p$.

To prove \eqref{eq:inversediv_commutator2} let us first recall the wavenumber projection. For any $\l\in\NN$ define $\mathbb{P}_{\leq \l} =\sum_{q:2^q \leq \l} \Delta_q$ and $\mathbb{P}_{\geq \l} =\Id-\mathbb{P}_{\leq \l}$. 
Consider the following decomposition:
\begin{align*}
|\nabla|^{-1} (a  f)  &=    |\nabla|^{-1+s}   |\nabla|^{-s} \big(\mathbb{P}_{\leq  {2}^{-4}\sigma }  a \big)   f   +   |\nabla|^{-1+s}  |\nabla|^{-s}\big(\mathbb{P}_{\geq {2}^{-4} \sigma }   a  \big) f    \\
&:=|\nabla|^{-1+s} A_{1 }+|\nabla|^{-1+s} A_{2 } 
\end{align*}
 
For the term $A_1$, since $f$ is $\sigma^{-1} \TT^d$-periodic and zero-mean, it follows that
\begin{align*}
\mathbb{P}_{\geq {2}^{-1} \sigma } f  =   f
\end{align*}
and then by the support of Fourier modes of $\big(\mathbb{P}_{\leq {2}^{-4}\sigma }  a \big)   f  $ we have
\begin{align*}
\mathbb{P}_{\leq {2}^{-2}\sigma } \Big[ \mathbb{P}_{\leq {2}^{-4}\sigma } a     f \Big] =0 \quad \text{and} \quad \fint_{\TT^d} \mathbb{P}_{\leq {2}^{-4}\sigma } a     f=0
\end{align*}
which implies that
$$
|\nabla|^{-1+s}  A_1 =  |\nabla|^{-s}  \mathbb{P}_{\geq {2}^{-2}\sigma }A_1
$$

By the Littlewood-Paley theory, we have
$$
\Big\||\nabla|^{-1+s} \mathbb{P}_{\geq {2}^{-2}\sigma }  \Big\|_{L^p \to L^p} \lesssim_p \sigma^{-1} \quad\text{for all $1 < p< \infty$}
$$

So, we have
\begin{align*}
\big\| |\nabla|^{-1+s} A_1   \big\|_p
& \lesssim_p \sigma^{-1} \Big\|  |\nabla|^{-s} \big(\mathbb{P}_{\leq {2}^{-4} \sigma } a     f   \big)\Big\|_p  .
\end{align*}
To get the exact form of the estimate, since and $ |\nabla|^{-1+s} $ is bounded $L^p \to L^p$ for any $1< p < \infty$ we bound the above in the following way:
\begin{align}\label{eq:inversenabla_A1}
\big\|  |\nabla|^{-1+s} A_1   \big\|_p  &  \leq \sigma^{-1+s} \big\|  |\nabla|^{-s}   (a     f )  \big\|_p + \sigma^{-1+s} \Big\|  |\nabla|^{-s} \big(\mathbb{P}_{\geq {2}^{-4} \sigma } a     f   \big)\Big\|_p \nonumber\\
&\lesssim \sigma^{-1+s} \|    |\nabla|^{-s}   (a     f )  \|_p+ \sigma^{-1+s} \big\| \mathbb{P}_{\geq {2}^{-4} \sigma }  a  \big\|_\infty  \|   f   \|_p
\end{align}
Also, for $A_2$ by the same reason, we have
\begin{align*}
\|  |\nabla|^{-1+s}  A_{2 } \|_p  \lesssim    \Big\|\mathbb{P}_{\geq {2}^{-4}\sigma }    a     f \Big\|_p & \leq    \| \mathbb{P}_{\geq {2}^{-4}\sigma }   a \|_\infty  \| f \|_p .
\end{align*}
So it suffices to show $\| \Delta_{q}    a \|_\infty  \lesssim   2^{-N q}$ for all $2^q \geq {2}^{-4} \sigma $. Recall from the definition of the periodic Littlewood-Paley projection  that
\begin{align*}
\Delta_{q}a = \int_{Q^d} \varphi_q(x -y ) a(y ) dy.
\end{align*}
Applying a standard integration by parts argument gives:
\begin{align*}
\Delta_{q}a \leq \Big|  \int_{Q^d}  |\nabla_y|^{-M}\varphi_q(x -y ) |\nabla|^{M}a(y ) dy  \Big|.
\end{align*}
And the below estimate follows from Young's inequality:
\begin{align*}
\| \Delta_{q} a \|_\infty &     \leq    \||\nabla|^{-M}\varphi_q \|_2 \| |\nabla|^{M} a  \|_2  .
\end{align*}
From $L^2$ boundedness of Riesz transform and the assumption on $a$ it follows
\begin{align}\label{eq:nablaMa}
 \| |\nabla|^{M} a  \|_2  \lesssim \| \nabla^{M} a  \|_2 \lesssim \| \nabla^{M} a  \|_\infty \leq \mu^{M} 
\end{align} 
where we have used that $C_a=1$.
By the Littlewood-Paley frequency cutoff there holds the bound
\begin{align}\label{eq:nabla_minusM}
\| |\nabla|^{-M}\varphi_q \|_2 \lesssim 2^{-q M } \| \varphi_q \|_2 \lesssim 2^{-q M +qd }.
\end{align} 
Thus, combining estimates \eqref{eq:nablaMa} and \eqref{eq:nabla_minusM} we find
\begin{align}\label{eq:smalldelta_q}
\| \Delta_{q} a \|_\infty \lesssim  2^{qd} \mu^{M } 2^{-q M }.
\end{align}
Since $\mu \leq \sigma^{1-\theta}$, there exists a sufficiently large $\l_0 \in \NN$ depending on $\theta>0$ so that $\l_0 \leq \mu \ll {2}^{-4} \sigma  $. Then there exists a sufficiently large $M\in \NN$ so that in view of \eqref{eq:smalldelta_q} we have
$$
\| \Delta_{q} a \|_\infty \lesssim   2^{-N   q } \quad \text{for all $ 2^q \geq {2}^{-4} \sigma $}.
$$
After taking a summation in $q$ for $2^q \geq {2}^{-4} \sigma$ we have that
\begin{align*} 
 \| \mathbb{P}_{\geq {2}^{-4}\sigma }   a \|_\infty \lesssim \sigma^{-N  }.
\end{align*}
Then collecting all the estimates, we have
\begin{align*} \big\|   |\nabla|^{-1+s} (a  f) \big\|_p &\leq \big\|   |\nabla|^{-1+s} A_{1 } \big\|_p + \big\| |\nabla|^{-1+s} A_{2 } \big\|_p\\
&\lesssim \sigma^{-1+s}  \|   |\nabla|^{- s} (a  f)  \|_p + \sigma^{-N} \|f \|_p .
\end{align*}

\end{proof}


\end{document}